\newtheorem{theorem}{Theorem}
\newtheorem{prop}{Proposition}
\newtheorem{lemma}{Lemma}
\newtheorem*{remark}{Remark}
\newtheorem{claim}{Claim}
\newtheorem{definition}{Definition}
\newtheorem{cor}{Corollary}
\newtheorem{conj}{Conjecture}
\def\XXint#1#2#3{{\setbox0=\hbox{$#1{#2#3}{\int}$}
  \vcenter{\hbox{$#2#3$}}\kern-.5\wd0}}
\author{Gang Liu}
\address{Department of Mathematics\\University of California, Berkeley\\Berkeley, CA 94720}
\email{gangliu@math.berkeley.edu}
\title[Three Circle Theorem]{Three Circle Theorems on K\"ahler manifolds and applications}
\date{}
\begin{document}
\begin{abstract}The classical Hadamard three circle theorem is generalized to complete K\"ahler manifolds. More precisely, we show that the nonnegativity of the holomorphic sectional curvature is a necessary and sufficient condition for the three circle theorem.  As corollaries, two sharp monotonicity formulae for holomorphic functions are derived. Among applications, we derive sharp dimension estimates (with rigidity) of holomorphic functions with polynomial growth when the holomorphic sectional curvature is nonnegative. When the bisectional curvature is nonnegative, this was due to Ni.  Also we study holomorphic functions with polynomial growth near infinity. On a complete noncompact K\"ahler manifold with nonnegative bisectional curvature, we prove any holomorphic function with polynomial growth is homogenous at infinity. This result is closely related with Yau's conjecture on the finite generation of holomorphic functions (see page $3-4$ for detailed explanation).

We also generalize the three circle theorem to K\"ahler manifolds with holomorphic sectional curvature lower bound. These inequalities are sharp in general. As applications, we establish sharp dimension estimates for holomorphic functions with certain growth conditions when the holomorphic sectional curvature is asymptotically nonnegative.

Finally observe that Hitchin constructed some complex manifolds which admit K\"ahler metric with positive holomorphic sectional curvature, yet do not even admit K\"ahler metrics with nonnegative Ricci curvature.  Thus the set of complete K\"ahler manifolds with nonnegative holomorphic sectional curvature is strictly wider than those with nonnegative bisectional curvature.
\end{abstract}
\maketitle

\section{\bf{Introduction}}
Recall the
 classical Hadamard Three Circle Theorem:
\begin{theorem}
Let $f(z)$ be a holomorphic function on the annulus $r_1\leq |z|\leq r_3$. Let $M_f(r)$ be the maximum of $|f(z)|$ on the circle $|z|=r$. Then $\log M_f(r)$ is a convex function of $\log r$.  In other words,  
\begin{equation}
\log(\frac{r_3}{r_1})\log M_f(r_2) \leq \log(\frac{r_3}{r_2})\log M_f(r_1) + \log(\frac{r_2}{r_1})\log M_f(r_3)
\end{equation}
where $r_1 < r_2 < r_3$.
\end{theorem}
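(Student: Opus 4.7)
The plan is to reduce the claimed convexity inequality to the maximum principle for subharmonic functions, using the fact that $\log|z|$ is harmonic on the punctured plane and $\log|f(z)|$ is subharmonic wherever $f$ is holomorphic (with the convention that it takes the value $-\infty$ at the zeros of $f$).

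First I would introduce the linear interpolant of $\log M_f$ in the variable $\log r$. Concretely, set
\[
A = \frac{\log M_f(r_3) - \log M_f(r_1)}{\log r_3 - \log r_1}, \qquad B = \log M_f(r_1) - A\log r_1,
\]
so that the affine function $L(t) = At + B$ satisfies $L(\log r_1) = \log M_f(r_1)$ and $L(\log r_3) = \log M_f(r_3)$. The auxiliary function
\[
\phi(z) = \log|f(z)| - A\log|z| - B
\]
is then a difference of a subharmonic function and a harmonic function on the punctured plane, hence subharmonic on the annulus $r_1 \le |z| \le r_3$.

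Next I would verify the boundary inequalities. By construction, on the circle $|z| = r_j$ for $j = 1, 3$ we have $A\log|z| + B = L(\log r_j) = \log M_f(r_j) \ge \log|f(z)|$, so $\phi \le 0$ on both boundary circles. The maximum principle for subharmonic functions (applied on the compact annulus) yields $\phi \le 0$ throughout, and in particular on $|z| = r_2$ this gives $\log M_f(r_2) \le A\log r_2 + B$. Rearranging and clearing denominators produces precisely the stated three circle inequality.

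There is no serious obstacle here; the only point requiring a little care is the handling of the zeros of $f$, where $\log|f|$ takes the value $-\infty$. This is harmless because the standard formulation of the maximum principle for upper semicontinuous subharmonic functions allows $-\infty$ values, and the submean inequality still holds at such points trivially. One could alternatively argue by approximating $f$ with $f + \varepsilon$ and letting $\varepsilon \to 0^+$ to avoid invoking the extended maximum principle.
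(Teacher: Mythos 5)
Your proof is correct: it is the standard maximum-principle argument comparing the subharmonic function $\log|f|$ against the harmonic interpolant $A\log|z|+B$, and the rearrangement at the end does yield exactly the stated inequality. The paper merely recalls this classical theorem without proof, but its proof of the generalization to K\"ahler manifolds (Theorem \ref{thm-10}) is the same argument in spirit --- a maximum principle applied to $\log|f|$ minus a logarithmic comparison function of the distance, with the Hessian comparison theorem replacing the harmonicity of $\log|z|$ --- so your approach specializes the paper's method back to the flat one-dimensional case. One small caveat: the alternative you mention of replacing $f$ by $f+\varepsilon$ does not actually eliminate zeros (the perturbed function may still vanish in the annulus); the clean fixes are either the extended maximum principle for $[-\infty,\infty)$-valued subharmonic functions, which you correctly invoke as the primary route, or the observation that the supremum of $\phi$ cannot be approached at zeros of $f$ unless $f\equiv 0$.
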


It is natural to wonder whether we can generalize this theorem to complete K\"ahler manifolds.
 \begin{definition}
 Let $M$ be a complete K\"ahler manifold. We say $M$ satisfies the three circle theorem, if for any point $p\in M$, $r>0$, any holomorphic function $f$ on $B(p, r)$, $\log M_f(r)$ is a convex function of $\log r$. In other words,  for $r_1 < r_2 < r_3$,
\begin{equation}\label{eq-200}
\log(\frac{r_3}{r_1})\log M_f(r_2) \leq \log(\frac{r_3}{r_2})\log M_f(r_1) + \log(\frac{r_2}{r_1})\log M_f(r_3).
\end{equation}
Here $M_f(r) = \max |f(x)|$ for $x\in B(p, r)$.
\end{definition}

We have a very satisfactory classification of manifolds satisfying the three circle theorem.
\begin{theorem}\label{thm-10}
Let $M$ be a complete K\"ahler manifold. Then $M$ satisfies the three circle theorem if and only if the holomorphic sectional curvature is nonnegative.
\end{theorem}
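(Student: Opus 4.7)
Assuming $H \geq 0$, my plan is to prove the three-circle inequality by contradiction using the plurisubharmonicity of $\log|f|$. Convexity of $\log M_f(\cdot)$ in $\log r$ is equivalent to the comparison principle: for every $A \geq 0$ and $B \in \mathbb{R}$, if $\log|f(x)| \leq A\log r(x) + B$ holds on the bounding spheres $r(x) = r_1, r_3$, then it holds on the whole annulus $\{r_1 \leq r(x) \leq r_3\}$. Monotonicity of $M_f(r)$ in $r$ (from the maximum principle for plurisubharmonic functions on balls) lets us take $A \geq 0$; the case $A = 0$ is trivial. Assume $A > 0$ and suppose, for contradiction, that $u := \log|f| - A\log r - B$ attains a positive interior maximum at some $x_0$, necessarily off the cut locus of $p$ and with $f(x_0) \neq 0$. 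Near $x_0$, $\log|f|$ is pluriharmonic, so $i\partial\bar\partial u = -A\, i\partial\bar\partial \log r$ there; the second-order max condition $i\partial\bar\partial u(x_0) \leq 0$ as a $(1,1)$-form forces $A \cdot i\partial\bar\partial \log r(x_0)(Z,\bar Z) \geq 0$ for every $(1,0)$-vector $Z \in T_{x_0}^{1,0}M$.

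\textbf{Forward direction: curvature step.} Next I would derive a pointwise inequality contradicting the above via a Jacobi-field / Hessian comparison. Let $\gamma$ be the minimizing unit-speed geodesic from $p$ to $x_0$, and let $Z$ be the $(1,0)$-component of $\dot\gamma(r(x_0))$. One has $\mathrm{Hess}(r^2/2)(\dot\gamma, \dot\gamma) = 1$, and since $J$ is parallel under the K\"ahler connection, the Jacobi field in the direction $J\dot\gamma$ (parallel transported along $\gamma$) is governed by the sectional curvature of the $J$-invariant plane $\mathrm{span}(\dot\gamma, J\dot\gamma)$, which equals the holomorphic sectional curvature $H(\dot\gamma) \geq 0$. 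Rauch comparison then yields $\mathrm{Hess}(r^2/2)(J\dot\gamma, J\dot\gamma) \leq 1$, and translating into the complex Hessian of $\log r$ gives $i\partial\bar\partial \log r(x_0)(Z, \bar Z) \leq 0$. In the strict case this contradicts the previous paragraph directly. In the degenerate case (which already occurs on flat $\mathbb{C}^n$), a Hopf-type strong maximum principle propagation along $\gamma$ forces $u$ constant on $\gamma$, contradicting $u \leq 0$ at the endpoint on $\partial B(p, r_1)$.

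\textbf{Reverse direction.} For the converse, assume the three-circle theorem holds and suppose, for contradiction, that $H(Z) < 0$ at some $p \in M$ for a unit $(1,0)$-vector $Z$. In K\"ahler normal coordinates $(z_1, \ldots, z_n)$ centered at $p$ with $\partial/\partial z_1|_p = Z$, take $f = z_1$. Using the expansions $g_{i\bar j}(z) = \delta_{ij} - \frac{1}{3}R_{i\bar j k \bar l}(p) z^k \bar z^l + O(|z|^3)$ and $z_i \circ \exp_p(v) = v^i + O(|v|^3)$, the plan is to compute
\begin{equation*}
M_f(r) = r + c\, r^3 + O(r^5), \qquad r \to 0^+,
\end{equation*}
where $c$ is a specific positive multiple of $H(Z)$ (intuitively, nonnegative holomorphic sectional curvature makes the geodesic ball contain the normal-coordinate Euclidean ball of the same radius, increasing $M_f(r)$). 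Writing $s = \log r$ gives $\log M_f(e^s) = s + c\, e^{2s} + O(e^{4s})$; convexity in $s$ as $s \to -\infty$ requires $c \geq 0$, hence $H(Z) \geq 0$, a contradiction.

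\textbf{Main obstacle.} The delicate part is the forward direction's curvature step: isolating precisely the component of the Riemann tensor that controls $i\partial\bar\partial \log r$ in the complex radial direction, so that only $H \geq 0$ (rather than full bisectional nonnegativity) is required, and then handling the degenerate case, which is unavoidable because the inequality already saturates on $\mathbb{C}^n$ where $\log r$ is harmonic on complex lines through $p$. The strong maximum principle along the radial geodesic must be combined with the pointwise analysis to close the argument. For the reverse direction the main technicality is the cubic-order asymptotics of $M_f(r)$: both the ball-shape correction and the coordinate correction $z_i - v^i$ contribute, and the computation must verify that the net coefficient picks out exactly $H(Z)$ with the correct sign.
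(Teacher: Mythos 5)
Your overall strategy coincides with the paper's (a maximum principle against the barrier $A\log r+B$ in the forward direction, and a normal-coordinate Taylor expansion of $M_{z_1}(r)$ for the converse; the converse as you outline it is essentially the paper's argument and is fine). But the forward direction has genuine gaps at exactly the two points you flag as delicate. First, the curvature step: your proposed mechanism --- Rauch comparison for ``the Jacobi field in the direction $J\dot\gamma$,'' governed by the sectional curvature of the holomorphic plane --- does not work as stated, because the Jacobi field with initial data along $J\dot\gamma$ does not remain proportional to the parallel transport of $J\dot\gamma$; controlling $\mathrm{Hess}(r)(J\dot\gamma,J\dot\gamma)$ by Rauch would require bounds on the sectional curvatures of all planes containing $\dot\gamma$, not just the holomorphic one. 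What makes the theorem work with only $H\ge 0$ is the Li--Wang \emph{complex} Hessian comparison: writing $e_1=\frac{1}{\sqrt2}(\nabla r-\sqrt{-1}J\nabla r)$ and applying the Bochner identity $0=\frac12|\nabla r|^2_{1\bar1}$, a Cauchy--Schwarz step discards all mixed second derivatives and leaves the scalar Riccati inequality $\partial_r r_{1\bar1}+2r_{1\bar1}^2+\frac12 R_{1\bar11\bar1}\le 0$, in which only the radial holomorphic sectional curvature enters; integrating gives $r_{1\bar1}\le\frac{1}{2r}$ and hence $(\log r)_{1\bar1}\le0$. This is the lemma your argument needs, and it is not obtainable from the real Rauch comparison in the single direction $J\dot\gamma$.

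Second, the degenerate case and the cut locus. The claim that the interior maximum is ``necessarily off the cut locus'' is unjustified; one needs Calabi's trick (replace $r$ by $d(p_1,\cdot)+\epsilon_1$ for a point $p_1$ slightly along the minimal geodesic, which gives an upper barrier smooth at the maximum point). And the ``Hopf-type propagation along $\gamma$'' is not a routine step: the differential inequality you have is degenerate elliptic --- it holds only in the single complex direction $e_1$, which varies from point to point and is not tangent to any holomorphic curve --- so the classical strong maximum principle does not apply, and a propagation-of-maxima theorem for degenerate operators would have to be invoked and verified near the maximum set. The paper dissolves both issues with an elementary perturbation: replace $\log r$ by $\log(r-\epsilon)$, for which $(\log(r-\epsilon))_{1\bar1}=\frac{r_{1\bar1}}{r-\epsilon}-\frac{1}{2(r-\epsilon)^2}<0$ strictly, so the maximum principle yields a contradiction with no degenerate case, and then one lets $\epsilon\to0$. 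As written, your forward direction is a plausible plan whose two hardest steps are left unproved, and the curvature step points at the wrong tool.
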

\begin{remark}\label{rm1}
It is interesting to compare this theorem with the classical volume comparison theorem. The volume comparison theorem says the Ricci curvature controls the volume growth; theorem \ref{thm-10} says the holomorphic sectional curvature controls the growth of holomorphic functions. 
\end{remark}
\begin{remark}
There are generalizations of the three circle theorem in various settings. For instance, Colding, De Lellis and Minicozzi \cite{[CLM]} proved a three circle inequality for the Sobolev norm of the solution to a Schr\"odinger equation on a product $N\times [0, T]$ for certain closed manifold $N$.  \end{remark}
\begin{remark}
In the Riemannian case, Colding-Minicozzi \cite{[CM1]} introduced a frequency function for harmonic functions.
In remark $2.16$ of \cite{[CM1]}, Colding and Minicozzi pointed out that when the manifold is Euclidean, the monotonicity of the frequency is an analytic version of the Three Circle Theorem of Hadamard. However, the frequency function is not monotonic for general manifolds with nonnegative Ricci curvature and Euclidean volume growth. Moreover, the order of a harmonic function at infinity is not well defined (see 
page 67 in \cite{[CM1]}). Compare this with corollary \ref{cor0} and corollary \ref{cor1}. 
\end{remark}
Next we discuss some applications of theorem \ref{thm-10}. Recall three conjectures of Yau \cite{[Y1]}.
\begin{conj}\label{conj1}
Let $M^n$ be a complete noncompact K\"ahler manifold with positive bisectional curvature, then $M$ is biholomorphic to $\mathbb{C}^n$.
\end{conj}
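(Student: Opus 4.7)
The plan is to approach this classical uniformization conjecture of Yau by parlaying holomorphic-function estimates, in particular those furnished by the three circle theorem (Theorem~\ref{thm-10}), into a biholomorphism $M \to \mathbb{C}^n$. The first step would be to use the positivity of the bisectional curvature, via Mok--Siu--Yau-type arguments together with the monotonicity consequences of Theorem~\ref{thm-10}, to produce a large supply of holomorphic functions with polynomial growth and to derive sharp dimension estimates for each prescribed growth order. One would also need to establish that $M$ has Euclidean (maximal) volume growth, a crucial input for subsequent volume-versus-degree comparisons; this would likely come by combining Bishop--Gromov with a rigidity argument that rules out sub-Euclidean growth under positive bisectional curvature (or by invoking it as an additional hypothesis).

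Next I would attempt to show that the graded ring $\mathcal{O}_P(M)$ of polynomial-growth holomorphic functions on $M$ is finitely generated --- itself the well-known Yau finite-generation conjecture alluded to in the abstract. The three circle theorem assigns, through monotonicity of the maximum modulus, a well-defined order at infinity to every such function, and the sharp dimension estimates give a well-behaved finite-dimensional filtration on $\mathcal{O}_P(M)$. Choosing generators yields a holomorphic map $F\colon M \to \mathbb{C}^N$ whose image is an affine algebraic variety; after passing to a generic $n$-dimensional projection one obtains a candidate map to $\mathbb{C}^n$. Properness should follow from growth estimates tying $|F|$ to the distance function, injectivity and the immersion property from separation of points and tangent vectors by the generators, and surjectivity (equivalently, biholomorphicity onto $\mathbb{C}^n$) from a degree--volume argument using the maximal volume growth.

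The essential obstacle, and the reason this problem remains open in full generality, is the very first step: \emph{producing} a rich enough family of polynomial-growth holomorphic functions. Positive bisectional curvature alone does not obviously yield any non-constant holomorphic function, and the existing constructions --- via $L^2$ methods of H\"ormander and Mok, or via K\"ahler--Ricci flow as in Chau--Tam --- all require substantial additional hypotheses such as maximal volume growth, bounded curvature, or quadratic curvature decay. The contribution of the present paper, as I read it, is precisely to sharpen the function-theoretic half of this machinery through Theorem~\ref{thm-10} and its monotonicity corollaries, so that once even a modest supply of polynomial-growth holomorphic functions is in hand, the dimension estimates, order-at-infinity, and homogeneity statements needed for the embedding strategy become available in sharp form.
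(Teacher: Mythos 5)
The statement you were asked about is labeled a \emph{conjecture} in the paper, and the paper does not prove it: the author explicitly writes that ``conjecture \ref{conj1} is still open in general'' and only cites partial progress (\cite{[Mo]}, \cite{[MSY]}, \cite{[CT]}). So there is no proof in the paper to compare against, and your text is, by your own admission in its final paragraph, not a proof either but an outline of the standard embedding program going back to Mok. Every load-bearing step in that outline is itself an open problem or requires hypotheses far beyond positive bisectional curvature: (1) the existence of even one nonconstant holomorphic function of polynomial growth is unknown in this generality (you correctly flag this, but it means the argument never gets off the ground); (2) finite generation of $\mathcal{O}_P(M)$ is precisely Conjecture \ref{conj2}, which the paper also does not prove --- Theorem \ref{thm-99} only shows asymptotic homogeneity of individual functions, a step \emph{toward} finite generation, not the statement itself; (3) maximal volume growth does not follow from positive bisectional curvature by any argument you give (the result in \cite{[L]} quoted in the paper requires the existence of a nonconstant polynomial-growth holomorphic function and a non-splitting hypothesis, so it cannot be used to bootstrap); and (4) properness, injectivity, and surjectivity of the candidate map $F$ are each substantial analytic problems that Mok only resolved under strong extra assumptions on curvature decay and volume growth.

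In short, your proposal correctly identifies why the conjecture is hard and correctly describes the role the paper's three circle theorem would play in such a program, but it does not constitute a proof, and it should not be presented as one. If the intent was to demonstrate understanding of the landscape, it succeeds; if the intent was to prove Conjecture \ref{conj1}, the gap is total --- no step after ``suppose we have enough polynomial-growth holomorphic functions'' is actually carried out, and the hypothesis of that supposition is itself unverified.
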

On a complete K\"ahler manifold $M$,  we say a holomorphic function $f\in \mathcal{O}_d(M)$, if there exists some $C>0$ with $|f(x)|\leq C(1+r(x, x_0))^d$ for all $x\in M$. Here $x_0$ is a fixed point on $M$. See also definition \ref{def1}.
\begin{conj}\label{conj2}
Let $M^n$ be a complete noncompact K\"ahler manifold with nonnegative bisectional curvature. Then the ring of holomorphic functions with polynomial growth is finitely generated.\end{conj}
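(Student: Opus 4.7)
The plan is to combine the three circle theorem (Theorem \ref{thm-10}) with sharp dimension estimates for $\mathcal{O}_d(M)$ in order to reduce finite generation to the algebraic structure of the tangent cone at infinity. Since nonnegative bisectional curvature implies nonnegative holomorphic sectional curvature, Theorem \ref{thm-10} applies on such $M$.

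First I would exploit the convexity of $\log M_f(r)$ in $\log r$ together with the polynomial upper bound to show that the order at infinity
\[
d(f) := \lim_{r\to\infty} \frac{\log M_f(r)}{\log r}
\]
exists as a genuine limit in $[0,\infty)$ for every $f \in \mathcal{O}_P(M)$. This furnishes an ascending filtration $F_k = \{ f : d(f) \le k \}$ whose union is $\mathcal{O}_P(M)$, and the dimension estimates (Ni's theorem and its extension in this paper) bound $\dim F_k$ by a polynomial in $k$. The subtle point here is to show that the possible values of $d(f)$ form a discrete set (ideally $\mathbb{Z}_{\ge 0}$); this should come from the rigidity statement for equality in the three circle inequality, by arguing that a non-integer order would produce a continuous family of independent functions violating the dimension bound.

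Next I would pass to the associated graded ring $\mathrm{Gr}\,\mathcal{O}_P(M) = \bigoplus_k F_k/F_{k-1}$. When the three circle inequality is saturated, functions in $F_k\setminus F_{k-1}$ should be \emph{homogeneous at infinity} of degree $k$ (the intermediate theorem advertised in the abstract), which identifies $\mathrm{Gr}\,\mathcal{O}_P(M)$ with a subring of the polynomial-growth holomorphic functions on the tangent cone at infinity of $M$. If that tangent cone is an affine variety---best of all $\mathbb{C}^n$, as Yau's uniformization conjecture predicts---then finite generation on the cone combined with a standard Noetherian lifting argument, using the filtration and the multiplicative structure $F_j \cdot F_k \subset F_{j+k}$, would return finite generation of $\mathcal{O}_P(M)$.

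The main obstacle is precisely what keeps Conjecture \ref{conj2} open: the structure of the tangent cone at infinity under only nonnegative bisectional curvature is not well understood. Without Euclidean volume growth or strict positivity, the tangent cone need not be unique as a metric space, let alone an affine algebraic variety, so the graded ring may a priori fail to be finitely generated. I would therefore view the ``homogeneous at infinity'' statement as the realistic immediate output of the three circle machinery, and expect the full conjecture to demand additional input---for instance Donaldson--Sun-type regularity for the tangent cone, or a more delicate direct argument that uses the sharp dimension bounds to generate $\mathcal{O}_P(M)$ degree by degree without fully resolving the asymptotic geometry.
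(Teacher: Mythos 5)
The statement you were asked to prove is labeled in the paper as a \emph{conjecture} (it is Yau's conjecture on finite generation), and the paper does not prove it. What the paper actually establishes is the intermediate step you describe: Theorem \ref{thm-99} shows that any $f\in\mathcal{O}_d(M)$ is asymptotically homogeneous of degree equal to its order at infinity, and the introduction sketches exactly the program you outline --- reduce everything to the cross-section of a tangent cone at infinity and hope that algebraic-geometric techniques on that compact factor yield finite generation. So your proposal is not a proof, and to your credit you say so explicitly; it is essentially the same program the author proposes, and the obstacle you identify (the tangent cone under only nonnegative bisectional curvature need not be unique, need not be an affine variety, and need not even carry a useful complex structure) is the same one the paper leaves open.

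Two further gaps in your sketch deserve to be named, because they are not resolved by anything in the paper. First, the discreteness of the set of orders $d(f)$ is not established and your argument for it does not work: the rigidity in Theorem \ref{thm2} concerns equality in the \emph{dimension count} $\dim\mathcal{O}_d(M)=\dim\mathcal{O}_d(\mathbb{C}^n)$, not equality in the three circle inequality for an individual $f$, and a non-integer order does not by itself produce a continuous family of independent functions. On a metric cone the admissible orders are determined by the spectrum of the cross-section Laplacian (see the decomposition (\ref{eq-105}) and Claim \ref{cl-5}), and there is no a priori reason these are integers; the finiteness of $\dim F_k$ only forces the orders to form a set with finite intersection with each $[0,k]$, which is weaker than a grading by $\mathbb{Z}_{\ge 0}$. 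Second, even granting a discrete filtration, identifying $\mathrm{Gr}\,\mathcal{O}_P(M)$ with a ring of functions on ``the'' tangent cone requires uniqueness of the tangent cone (Theorem \ref{thm-99} is proved along subsequences of scales, and the paper explicitly notes that uniqueness is only a plausible consequence of Corollary \ref{cor-70}, not a theorem), and requires the leading term map $F_k/F_{k-1}\to\{\text{degree-}k\text{ homogeneous harmonic functions on }C(\Sigma)\}$ to be an injective ring homomorphism onto a finitely generated target --- none of which is available. The Noetherian lifting step at the end is standard once those inputs are in place, but they are precisely the content of the open problem.
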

\begin{conj}\label{conj3}
Let $M^n$ be a complete noncompact K\"ahler manifold with nonnegative bisectional curvature. Then given any $d>0$, $dim(\mathcal{O}_d(M))\leq dim(\mathcal{O}_d(\mathbb{C}^n))$. \end{conj}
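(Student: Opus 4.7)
The plan is to combine Theorem~\ref{thm-10} with a filtration of $\mathcal{O}_d(M)$ by order of vanishing at the basepoint $x_0$. Since nonnegative bisectional curvature implies nonnegative holomorphic sectional curvature, Theorem~\ref{thm-10} applies, so for every holomorphic $f$ on $M$ the function $\phi(s):=\log M_f(e^s)$ is convex in $s\in\mathbb{R}$.

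First I would extract two one-sided slopes from this convexity. The right-derivative $\phi'_+$ is nondecreasing, so the limits $\alpha_-:=\lim_{s\to-\infty}\phi'_+(s)$ and $\alpha_+:=\lim_{s\to+\infty}\phi'_+(s)$ exist in $[-\infty,+\infty]$ and satisfy $\alpha_-\le\alpha_+$. For $f\in\mathcal{O}_d(M)$ the growth bound $M_f(r)\le C(1+r)^d$ gives $\phi(s)\le ds+O(1)$ as $s\to+\infty$, hence $\alpha_+\le d$. On the other end, working in local holomorphic coordinates $(z_1,\dots,z_n)$ centered at $x_0$, the K\"ahler normal coordinate expansion $g_{i\bar j}=\delta_{ij}+O(|z|^2)$ makes geodesic balls comparable to coordinate balls at small scales, so if $f$ vanishes to order exactly $m$ at $x_0$ with leading homogeneous part $P_m$, one finds $M_f(r)=(1+o(1))\,r^m\max_{|z|=1}|P_m(z)|$ as $r\to 0$. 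This yields $\phi(s)=ms+O(1)$ near $-\infty$, and a short convex-analysis argument (if $\alpha_-<m$, integrating $\phi'_+$ forces $\phi(s)-ms\to+\infty$ as $s\to-\infty$, contradiction) gives $\alpha_-=m$. Combining these, every nonzero $f\in\mathcal{O}_d(M)$ satisfies $\mathrm{ord}_{x_0}(f)\le d$.

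The dimension estimate then follows from a standard filtration argument. Let $F_m:=\{f\in\mathcal{O}_d(M):\mathrm{ord}_{x_0}(f)\ge m\}$; by the previous step $F_{\lfloor d\rfloor+1}=0$. The leading-part map sends $F_m/F_{m+1}$ injectively into the space $\mathcal{H}_m$ of homogeneous polynomials of degree $m$ in $n$ complex variables, so $\dim F_m/F_{m+1}\le\binom{n+m-1}{n-1}$. Summing over $m=0,\dots,\lfloor d\rfloor$ and applying the hockey-stick identity yields $\dim\mathcal{O}_d(M)\le\binom{n+\lfloor d\rfloor}{n}=\dim\mathcal{O}_d(\mathbb{C}^n)$, as required.

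I expect the subtlest step to be the identification $\alpha_-=\mathrm{ord}_{x_0}(f)$: one must verify that the small-scale distortion between the geodesic and Euclidean metrics at $x_0$ does not disturb the limiting logarithmic slope, and that the one-sided derivative $\phi'_+$ (which is only defined by monotone limits) actually converges to $m$ rather than merely having $m$ as a limit point. Once this local comparison is pinned down, the rest of the proof reduces to convexity of $\phi$ on the interval and elementary linear algebra on the graded pieces $F_m/F_{m+1}$.
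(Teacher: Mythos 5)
Your proposal is correct and follows essentially the same route as the paper's proof of Theorem~\ref{thm2}: the three circle theorem (via the monotonicity of $M_f(r)/r^d$, i.e.\ Corollaries~\ref{cor1} and~\ref{cor2}) forces $\mathrm{ord}_p(f)\le d$ for every nonzero $f\in\mathcal{O}_d(M)$, and the conclusion follows by linear algebra on Taylor jets at $p$. Your filtration by vanishing order is just the forward-direction phrasing of the paper's contradiction argument (``if the dimension were too large, some nonzero $f$ would vanish to order $[d]+1$''), and your careful treatment of the slope $\alpha_-$ via normal coordinates is the standard justification of the step the paper leaves implicit.
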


So far there have been many works on conjecture \ref{conj1}. For example,  \cite{[Mo]}\cite{[MSY]}\cite{[CT]}, etc. However, conjecture \ref{conj1} is still open in general.
Below we shall consider some aspects of conjecture \ref{conj3} and conjecture \ref{conj2}.

To bound of the dimension of holomorphic functions with polynomial growth, it suffices to bound the vanishing order by the growth rate.

In \cite{[Mo]}, Mok proved the following:
\begin{theorem}[Mok] 
Let $M^n$ be a complete K\"ahler manifold with positive Ricci curvature ($n\geq 2$), such that for some fixed point $p\in M$, 
\begin{itemize}
\item Scalar curvature $\leq \frac{C_0}{r(p, x)^2}$ for some $C_0>0$;
\item $Vol(B(p, r)\geq C_1r^{2n}$ for some $C_1>0$.
\end{itemize}
Let $f\in \mathcal{O}_d(M)$, then there exists a constant $C$ independent of $f$ such that the vanishing order of $f$ at $p$ satisfies $ord_p(f) \leq Cd$. \end{theorem}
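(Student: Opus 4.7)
My approach is to establish an approximate $L^2$ version of the three circle inequality under the stated hypotheses and deduce the vanishing order bound by comparing behavior at small and large scales. Theorem~\ref{thm-10} does not apply directly, because positive Ricci curvature, quadratic scalar curvature decay, and Euclidean volume growth are too weak to force nonnegative holomorphic sectional curvature; but these hypotheses are exactly what one needs to control the error terms in a Bochner-type integral monotonicity formula.

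Let $k = \mathrm{ord}_p(f)$ and set
$$J(r) = \log\Bigl(\dashint_{B(p,r)}|f|^2\,dV\Bigr).$$
Near $p$, the Taylor expansion of $f$ in local holomorphic coordinates begins with a nonzero homogeneous polynomial of degree $k$, so $J(r) = 2k\log r + O(1)$ as $r\to 0^+$ (the Euclidean volume lower bound is used here). At infinity, the polynomial growth hypothesis combined with Bishop--Gromov gives $J(R)\le 2d\log R + O(1)$. The theorem then follows from an inequality of the form
$$\frac{d}{d\log r}J(r)\le 2d + \mathrm{Err}(r), \qquad \int_0^\infty |\mathrm{Err}(r)|\,\frac{dr}{r}<\infty,$$
with bounds uniform in $f$: integrating from small $r$ up to large $R$ and using the two asymptotic expansions of $J$ forces $k\le Cd$ with $C$ depending only on the curvature and volume constants of $M$, not on $f$.

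To establish the differential inequality I would differentiate $J$ via the coarea formula and apply Green's identity to $|f|^2$, which is subharmonic on a K\"ahler manifold since $\Delta|f|^2 = 2|\partial f|^2 \ge 0$. The Bochner--Kodaira formula introduces a curvature correction controlled by the scalar curvature, and the Laplacian comparison (available from positive Ricci) controls the distance function. The main obstacle is showing that $\mathrm{Err}(r)$ has integrable decay in $\log r$ rather than accumulating across scales: this is precisely where the hypothesis $\mathrm{Scal}\le C_0/r^2$ is essential, as it makes the curvature correction integrate against $dr/r$ to a finite quantity thanks to the pointwise $1/r^2$ decay. The two-sided volume control (Euclidean lower bound, Bishop--Gromov upper bound) prevents the boundary-to-volume ratio from degenerating at either end, and keeping all constants independent of $f$ is routine since the estimates are homogeneous of degree two in $|f|$.
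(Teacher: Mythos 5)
This statement is quoted background: the paper does not prove it but cites Mok's paper \cite{[Mo]}, where the argument runs through the Poincar\'e--Lelong equation --- one bounds $\mathrm{ord}_p(f)$ by the Lelong number of the zero divisor $[Z_f]$ at $p$ and then controls $\int_{Z_f\cap B(p,R)}\omega^{n-1}$ by a Jensen/Nevanlinna-type identity using Green's function estimates, which is where the volume growth and the quadratic scalar curvature decay enter. Your proposal takes an entirely different route (an $L^2$ frequency/three-circle argument), so it must be judged on its own, and as written it has a genuine gap.

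The gap is the differential inequality itself, which is the entire content of the theorem and is asserted rather than derived. Two concrete problems. First, the claim that $\mathrm{Scal}\leq C_0/r^2$ makes $\mathrm{Err}(r)$ integrable against $dr/r$ is false: quadratic curvature decay is exactly the \emph{scale-invariant} borderline. Rescaling the annulus $B(p,2r)\setminus B(p,r)$ to unit size turns the bound into $\mathrm{Scal}\leq C_0$ uniformly in $r$, so any curvature-controlled error contributes a fixed amount $O(C_0)$ per dyadic scale and $\int_\epsilon^R|\mathrm{Err}|\,dr/r \sim C_0\log(R/\epsilon)$ diverges. (Indeed, if your inequality held with integrable error you would obtain the sharp bound $k\leq d$, which is precisely what is \emph{not} available under Mok's hypotheses and is why his conclusion is only $k\leq Cd$.) The correct target is an error bounded per unit of $\log r$, which after integration yields $k\leq d+C(C_0,C_1,n)$; you should reorganize the argument around that. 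Second, and more seriously, you give no mechanism for proving even that weaker almost-monotonicity. The derivative of the frequency involves the deformation tensor of the radial vector field, i.e.\ $\mathrm{Hess}(r^2)-2g$, via a Rellich--Pohozaev identity; this quantity is not controlled pointwise by Ricci or scalar curvature bounds (Laplacian comparison only controls its trace), and the present paper itself records (quoting remark 2.16 of \cite{[CM1]}) that the frequency function fails to be monotone on manifolds with nonnegative Ricci curvature and Euclidean volume growth. So the obstruction your sketch waves past is a known one, and ``Bochner--Kodaira plus Laplacian comparison'' does not resolve it. Without a worked-out substitute for Hessian comparison --- which under Mok's hypotheses essentially forces you back to the Green's function/Poincar\'e--Lelong machinery --- the proof does not go through.
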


For a complete K\"ahler manifold $M^n$ with nonnegative bisectional curvature, we say $M$ is of maximal volume growth, if $\lim\limits_{r\to\infty}\frac{Vol(B(p, r))}{r^{2n}} > 0$.
In \cite{[N1]}, Ni proved a remarkable theorem which confirmed conjecture \ref{conj3} by assuming the maximal volume growth condition:
\begin{theorem}[Ni]\label{thm0}
Let $M^n$ be a complete K\"ahler manifold with nonnegative holomorphic bisectional curvature. Assume $M$ is of maximal volume growth, then $$dim(\mathcal{O}_d(M))\leq dim(\mathcal{O}_d(\mathbb{C}^n))$$ for any positive integer $d$. If the equality holds for some $d$, $M$ is isometric and biholomorphic to $\mathbb{C}^n$.
\end{theorem}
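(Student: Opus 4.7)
The plan is to apply the three circle theorem (Theorem \ref{thm-10}) to bound the vanishing order of any holomorphic function with polynomial growth by its growth rate, and then to deduce the dimension inequality by elementary linear algebra. Since nonnegative holomorphic bisectional curvature implies nonnegative holomorphic sectional curvature, Theorem \ref{thm-10} applies and the three circle inequality \eqref{eq-200} is available for all holomorphic functions on $M$.

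The first step is a vanishing order estimate: if $f \in \mathcal{O}_d(M)$ is nonzero and $k := \mathrm{ord}_p(f)$ denotes its vanishing order at $p$, then $k \leq d$. In a local holomorphic chart near $p$ the lowest nonzero homogeneous Taylor component of $f$ has degree $k$, so $\log M_f(r) = k \log r + O(1)$ as $r \to 0^+$ (the geodesic and chart distances being comparable near $p$). Dividing \eqref{eq-200} by $\log(r_3/r_1)$, fixing $r_2, r_3$, and letting $r_1 \to 0^+$ yields
\begin{equation*}
\log M_f(r_2) \leq \log M_f(r_3) - k \log(r_3/r_2),
\end{equation*}
since $\log(r_2/r_1)/\log(r_3/r_1) \to 1$ while $\log(r_3/r_2)/\log(r_3/r_1) \cdot \log M_f(r_1) \to -k\log(r_3/r_2)$. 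Combined with $\log M_f(r_3) \leq d \log r_3 + O(1)$ from the polynomial growth assumption, letting $r_3 \to \infty$ forces $k \leq d$.

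For the dimension bound, consider the $d$-jet map $J^d_p \colon \mathcal{O}_d(M) \to \mathrm{Jet}^d_p$ sending $f$ to its formal holomorphic $d$-jet at $p$. Its target is a complex vector space of dimension $\binom{n+d}{n} = \dim \mathcal{O}_d(\mathbb{C}^n)$. The kernel of $J^d_p$ consists of $f \in \mathcal{O}_d(M)$ with $\mathrm{ord}_p(f) > d$, which by the previous step must vanish identically. Hence $J^d_p$ is injective and $\dim \mathcal{O}_d(M) \leq \binom{n+d}{n}$, proving the first assertion.

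Equality forces $J^d_p$ to be an isomorphism, so there exist $z_1, \ldots, z_n \in \mathcal{O}_d(M)$ whose differentials at $p$ form a basis of $T_p^{*1,0} M$, giving a local biholomorphism $Z := (z_1, \ldots, z_n) \colon M \to \mathbb{C}^n$ near $p$. The main obstacle is upgrading this local statement to a global biholomorphic isometry $M \cong \mathbb{C}^n$. The strategy is to invoke the equality case of \eqref{eq-200}, which forces $\log M_{z_i}(r)$ to be exactly linear in $\log r$, in combination with the sharp monotonicity formulae that Theorem \ref{thm-10} is advertised to imply. Coupled with the maximal volume growth hypothesis, Bishop--Gromov comparison, and Cheeger--Colding theory, this should identify the tangent cone at infinity as the flat $\mathbb{C}^n$; the equality case of volume comparison then upgrades $Z$ to a global isometry. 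I expect this rigidity step to be the technical core of the argument.
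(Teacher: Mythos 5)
The paper does not prove this theorem directly (it is quoted from Ni, whose proof is parabolic, via the heat flow); what the paper does prove is the generalization Theorem \ref{thm2}, and your route — three circle inequality $\Rightarrow$ vanishing order $\leq$ growth order $\Rightarrow$ injectivity of the $d$-jet map — is exactly the paper's argument for the dimension bound there. Your first two steps are correct: the limit computation as $r_1\to 0^+$ recovers Corollary \ref{cor2}, and combining with polynomial growth as $r_3\to\infty$ recovers Corollary \ref{cor1}; the linear algebra is then standard. So the inequality $\dim\mathcal{O}_d(M)\leq\dim\mathcal{O}_d(\mathbb{C}^n)$ is fine (and, as in the paper, you never need maximal volume growth for it).

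The rigidity part, however, has a genuine gap, on two counts. First, a concrete error: you take $z_1,\dots,z_n\in\mathcal{O}_d(M)$ with independent differentials at $p$ and claim equality forces $\log M_{z_i}(r)$ to be exactly linear in $\log r$. These functions have vanishing order $1$ but growth order $d$, so the two monotonicity formulas give only $M_{z_i}(r)/r$ nondecreasing and $M_{z_i}(r)/r^d$ nonincreasing — i.e.\ $cr\leq M_{z_i}(r)\leq Cr^d$, which is not linearity unless $d=1$. The paper instead uses surjectivity of the jet map to produce, for each $p$ and each direction $X$, an $f\in\mathcal{O}_d(M)$ with leading jet $z_1^d$, so that \emph{both} monotonicity statements hold with the same exponent $d$ and force $M_f(r)/r^d\equiv\mathrm{const}$. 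Second, your proposed continuation through Cheeger--Colding theory and tangent cones at infinity is only a hoped-for strategy: you do not explain how equality of dimensions forces the asymptotic volume ratio to be maximal (the step on which the volume-comparison rigidity would hinge), and you acknowledge the step is not carried out. The paper's rigidity argument is entirely local and avoids all of this: from $M_f(r)/r^d\equiv\mathrm{const}$ one traces the equality case back through the maximum principle in the proof of Theorem \ref{thm-10}, obtaining equality in the Hessian comparison at the maximum points $q(r)$, hence $R_{YJYJYY}=0$ for $Y=\nabla r$ there; letting $r\to 0^+$ and using the Taylor expansion of $f$ gives $R_{XJXJXX}=0$ at $p$. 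Since $p$ and $X$ are arbitrary, $M$ is flat, and one concludes by passing to the universal cover $\mathbb{C}^n\to M$ and counting dimensions of polynomial-growth functions to rule out a nontrivial quotient. You should either adopt this local equality analysis or supply the missing link between dimension equality and the asymptotic volume ratio.
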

Ni's method is parabolic.  There are two key points in the proof: 1. The plurisubharmonicity is preserved under the heat flow when the bisectional curvature is nonnegative. This results was proved in \cite{[NT]} by Ni and Tam. 2. A monotonicity formula for plurisubharmonic functions under the heat flow. This monotonicity formula resembles the trace form of Hamilton's Li-Yau-Hamilton differential inequality \cite{[H1]} originally called the differential Harnack inequality, for the Ricci flow. 

 By using same technique in \cite{[N1]},  Chen, Fu, Le and Zhu \cite{[CFLZ]} removed the maximal volume growth condition in theorem \ref{thm0}. Therefore, conjecture \ref{conj3} was solved completely.

\bigskip
As an application of theorem \ref{thm-10},
 we generalize theorem \ref{thm0} to the case when the holomorphic sectional curvature is nonnegative. 
 \begin{theorem}\label{thm2}
 
 Let $M^n$ be a complete noncompact K\"ahler manifold with nonnegative holomorphic sectional curvature. Then for any $d > 0$, $dim(\mathcal{O}_d(M)) \leq dim(\mathcal{O}_d(\mathbb{C}^n))$. Moreover, if the equality holds for some positive integer $d$, $M$ is biholomorphic and isometric to $\mathbb{C}^n$.
 \end{theorem}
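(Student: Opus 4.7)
The plan is to bootstrap Theorem~\ref{thm-10} into a pointwise bound $\mathrm{ord}_p(f)\le d$ for every nonzero $f\in\mathcal{O}_d(M)$ at a fixed base point $p\in M$, and then conclude the dimension estimate by an injective jet map. This is in parallel with Ni's strategy for Theorem~\ref{thm0}, with the monotonicity provided by the three circle theorem playing the role of the heat flow monotonicity used there.

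For the vanishing order bound I would fix $p\in M$ and a nonzero $f\in\mathcal{O}_d(M)$ with vanishing order $k=\mathrm{ord}_p(f)$, and consider $h(s)=\log M_f(e^s)$ for $s\in\mathbb{R}$. Theorem~\ref{thm-10} makes $h$ convex on all of $\mathbb{R}$. The polynomial growth hypothesis forces $h(s)\le ds+O(1)$ as $s\to+\infty$, while a local Taylor expansion of $f$ in a holomorphic chart near $p$ yields $h(s)=ks+O(1)$ as $s\to-\infty$. Since the secant slopes of a convex function are monotone, the asymptotic slope at $-\infty$ (namely $k$) is at most the asymptotic slope at $+\infty$ (at most $d$), so $k\le d$. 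For a positive integer $d$, the linear map
$$J_p^d:\mathcal{O}_d(M)\longrightarrow \mathbb{C}[z_1,\dots,z_n]_{\le d},$$
sending $f$ to its degree-$d$ holomorphic Taylor polynomial at $p$ in a local holomorphic chart, is then injective: if $J_p^d f=0$ for some nonzero $f$ then $\mathrm{ord}_p(f)\ge d+1$, contradicting the bound above. Since the target has dimension $\dim\mathcal{O}_d(\mathbb{C}^n)$, the inequality follows.

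For rigidity, assume equality holds for some integer $d\ge 1$; then $J_p^d$ is an isomorphism, so one may choose $n$ functions $f_1,\dots,f_n\in\mathcal{O}_1(M)$ vanishing at $p$ whose differentials at $p$ span $(T_p^{1,0}M)^\ast$. Each $f_i$ then has $\mathrm{ord}_p(f_i)=1$ and growth order at most $1$, so by the argument above both asymptotic slopes of $h_i(s)=\log M_{f_i}(e^s)$ equal $1$. Convexity of $h_i$ then forces $h_i$ to be affine with slope $1$, i.e.\ $M_{f_i}(r)$ is a constant multiple of $r$ on $(0,\infty)$; this is the equality case of the three circle theorem. Propagating this equality along all radial geodesics from $p$ should force the holomorphic sectional curvature in the relevant complex directions to vanish and impose strong constraints on the radial structure of the metric, after which one expects to show that $F=(f_1,\dots,f_n)\colon M\to\mathbb{C}^n$ is a biholomorphic isometry. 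The main obstacle is precisely this last step: turning the scalar equality case of the three circle theorem into global geometric rigidity requires a careful analysis of when equality holds in whatever Hessian/curvature estimate underlies the proof of Theorem~\ref{thm-10}, and then coherently combining the rigidity information coming from $n$ different holomorphic functions into a biholomorphic isometry with $\mathbb{C}^n$.
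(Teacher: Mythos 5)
The first half of your proposal (the dimension estimate) is correct and is essentially the paper's argument in different packaging: your slope comparison for the convex function $h(s)=\log M_f(e^s)$ is exactly the statement that $M_f(r)/r^d$ is nonincreasing (Corollary \ref{cor1}), which forces $\mathrm{ord}_p(f)\le d$, and your injective jet map is the contrapositive of the paper's linear-algebra step. (Minor point: for non-integer $d$ you should map to polynomials of degree at most $[d]$; this changes nothing since $\dim\mathcal{O}_d(\mathbb{C}^n)=\dim\mathbb{C}[z]_{\le[d]}$.)

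The rigidity half has a genuine gap, in two places. First, surjectivity of $J_p^d$ only produces, for each linear form $z_i$, a function $f_i\in\mathcal{O}_d(M)$ whose degree-$d$ Taylor polynomial at $p$ is $z_i$; it does not produce functions in $\mathcal{O}_1(M)$. Such an $f_i$ has vanishing order $1$ but its growth order can be anything in $[1,d]$, so you cannot conclude that both asymptotic slopes of $h_i$ equal $1$, and convexity does not force $M_{f_i}(r)/r$ to be constant. The paper sidesteps this by instead taking $f$ with $d$-jet equal to $z_1^d$ (where $\frac{\partial}{\partial z_1}(p)=X-\sqrt{-1}JX$ for the direction $X$ under investigation): this $f$ has vanishing order exactly $d$ and growth order at most $d$, so Corollary \ref{cor1} and Corollary \ref{cor2} together squeeze $M_f(r)/r^d$ to a constant. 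Second, even with the correct choice of $f$, you explicitly leave unproven the passage from equality in the three circle theorem to vanishing of the holomorphic sectional curvature, and that is where the substance of the rigidity proof lies: one reruns the maximum-principle proof of Theorem \ref{thm-10}, observes that $G-F$ attains its maximum $0$ at some $q(r)\in\partial B(p,r)$ for every $r$, deduces equality in the Hessian comparison $r_{1\overline{1}}\le\frac{1}{2r}$ there and hence $R_{YJYJYY}=0$ for $Y=\nabla r$ at $q(r)$, and finally lets $r\to 0^+$, using the Taylor expansion $f=z_1^d+O(r^{d+1})$ to see that $Y$ subconverges into the span of $X$ and $JX$, giving $R_{XJXJXX}=0$ for the arbitrary direction $X$; flatness and a covering/dimension-count argument then identify $M$ with $\mathbb{C}^n$. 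Without this analysis the proposal does not establish rigidity.
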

The condition in theorem \ref{thm2} is much weaker than the nonnegativity of holomorphic bisectional curvature.  For instance, it is not known whether the volume comparison theorem holds in this case.

In the Riemannian case, there have been many articles on the dimension estimate of harmonic
functions with polynomial growth. See \cite{[CM]}-\cite{[CM6]}, \cite{[Li4]} etc.  Colding and Minicozzi \cite{[CM]} proved that for a complete manifold $M^m$ with nonnegative Ricci curvature, the dimension of harmonic functions with polynomial growth is finite. In \cite{[Li4]}, Li provided an elegant short proof. However, the example of Donnelly \cite{[D]} shows that the sharp estimate is not true, if we compare with the Euclidean space. The sharp upper bound was only obtained either when $d = 1$ or $m = 2$ in \cite{[LT1]}\cite{[LT2]} by Li and Tam. The rigidity part for $d = 1$ was due to Li \cite{[Li2]} for the K\"ahler case; Cheeger-Colding-Minicozzi \cite{[CCM]} for the general case.  Li and Wang \cite{[LW1]} showed that when the sectional curvature is nonnegative and manifold has maximal volume growth,  an asymptotically sharp estimate is valid. Thus for dimension estimates of harmonic (holomorphic) functions, there is a subtle difference between the Riemannian case and  the K\"ahler case. Please refer to \cite{[Li1]}\cite{[Li3]} for nice survey of these results.

\medskip

Now we consider conjecture \ref{conj2}.
So far very little is known. In \cite{[LT2]}, Li and Tam confirmed conjecture \ref{conj2} in complex dimension $1$. Due to the specialty in this case, it seems not so easy to generalize their result to higher dimensions.  In \cite{[CM6]}\cite{[N1]}, it was proved that quotient field of holomorphic functions with polynomial growth has finite transcendental degree over $\mathbb{C}$. However, it is unclear why the ring itself is finitely generated.

The difficulty of conjecture \ref{conj2} is to build a bridge between analysis and algebra. We propose a method to solve conjecture \ref{conj2}. More precisely, 
in theorem \ref{thm-99}, we prove that on a complete K\"ahler manifold with nonnegative bisectional curvature, any holomorphic function with polynomial growth is asymptotically homogenous. The point here is that these annuli are independent of the holomorphic functions and no volume condition is assumed( in \cite{[L]}, it was proved that if the manifold does not split, it must be of maximal volume growth). Therefore, we can handle all holomorphic functions with polynomial growth in one single tangent cone.   Moreover, if there exists a nonconstant holomorphic function with polynomial growth and the universal cover of the manifold does not split, it was proved that the manifold must be of maximal volume growth.  In some sense, we reduce the problem to the compact factor of the metric cone. Then, techniques from algebraic geometry might be used to show finite generation of the compact factor. The picture is clear for standard $\mathbb{C}^n$: given any holomorphic polynomial $f$, $f$ is homogenous with degree $d$ at infinity. Then $f$ can be reduced to a function on $\mathbb{S}^{2n-1}$ which is the cross section of the tangent cone of $\mathbb{C}^n$. By the Hopf fibration $\mathbb{S}^1\to \mathbb{S}^{2n-1}\to \mathbb{C}\mathbb{P}^{n-1}$, we further reduce the function to a global section of the line bundle $dH$, where $H$ is the hyperplane line bundle on $\mathbb{C}\mathbb{P}^{n-1}$.

In theorem \ref{thm8}, the three circle theorem is extended to the case when the holomorphic sectional curvature has a lower bound. As applications, we generalize theorem \ref{thm2} to the case when the holomorphic sectional curvature is only asymptotically nonnegative. The power is still sharp. This in particular includes the case when the holomorphic sectional curvature is nonnegative outside a compact set. To the author's best knowledge, even with the stronger condition that the bisectional curvature is nonnegative outside a compact set, it was not known whether the dimension of holomorphic functions with polynomial growth is finite. Also, in the Riemannian case, it was only known that if the first Betti number is finite and the manifold has nonnegative Ricci curvature outside a compact set, the dimension of harmonic functions with polynomial growth is finite. See for example, \cite{[W]}\cite{[T]}.

We also apply the three circle theorem to holomorphic maps with polynomial growth between certain complete K\"ahler manifolds. We are able to embed the space of holomorphic maps into a finite dimensional complex manifold.  If the embedding is a homeomorphism, we show both the domain and the target are complex Euclidean spaces.

The proof of theorem \ref{thm-10} is surprisingly simple. It only uses the Hessian comparison theorem and the maximum principle. In the maximum principle, we consider the Hessian in one direction, not the Laplacian.

Apart from the introduction, this paper is organized as follows:
In section $2$, we prove theorem \ref{thm-10}. Two monotonicity formulae are derived as corollaries. We prove theorem \ref{thm2} as a byproduct.  In section $3$, we study the asymptotic behavior of holomorphic functions with polynomial growth on K\"ahler manifolds with nonnegative bisectional curvature.
In section $4$, a Liouville type theorem for plurisubharmonic functions is proved when the holomorphic sectional curvature is nonnegative. Note that when the holomorphic bisectional curvature is nonnegative, the result was due to Ni and Tam \cite{[NT]}.
Section $5$ deals with the holomorphic bundle case. In section $6$, we study the space of holomorphic maps between certain noncompact K\"ahler manifolds. Section $7$ introduces the three circle theorem in the general setting. We assume that the holomorphic sectional curvature has a lower bound which might depend on the distance. In particular, we state the corresponding theorem when the holomorphic sectional curvature is no less than $-1$ or $1$.
In section $8$, we study holomorphic functions on complete K\"ahler manifolds with holomorphic sectional curvature asymptotically nonnegative. Sharp dimension estimates are obtained. Miscellaneous results are proved in section $9$. For example, we prove that in some cases, holomorphic functions with exponential growth have finite dimension. Moreover, the power is sharp. Section $10$ concludes this paper with examples showing that certain complex manifolds admit K\"ahler metric with positive holomorphic sectional curvature, but do not admit K\"ahler metric with nonnegative Ricci curvature.  

\bigskip
\bigskip
\vskip.1in
\begin{center}
\bf  {\quad Acknowledgment}
\end{center}
The author would like to express his deep gratitude to his former advisor, Professor Jiaping Wang for numerous help and many valuable discussions during the work. He thanks his mentor Professor John Lott, for his patience in reading the preprint and useful suggestions and corrections.
He also thanks Professors Peter Li, Lei Ni, Munteanu Ovidiu, ShingTung Yau,  for their interests in this work. Special thanks also go to Guoyi Xu for the discussion on the Gromov-Hausdorff convergence of manifolds.

\section{\bf{Hadamard Three Circle Theorem, Special Case}}
In this section we prove theorem \ref{thm-10} and theorem \ref{thm2}.

\begin{definition}\label{def1}
Let $M$ be a complete noncompact K\"ahler manifold. Let $\mathcal{O}(M)$ be the ring of holomorphic functions on $M$. For any $d\geq 0$, define $$\mathcal{O}_d(M) = \{f\in \mathcal{O}(M)|\overline{\lim\limits_{r\to\infty}}\frac{M_f(r)}{r^d}<\infty\}.$$ Here $r$ is the distance from a fixed point $p$ on $M$; $M_f(r)$ is the maximal modulus of $f$ on $B(p, r)$. If $f\in \mathcal{O}_d(M)$, we say $f$ is of polynomial growth with order $d$.
 We also define $$\mathcal{O}'_d(M)= \{f\in \mathcal{O}(M)|\lim\limits_{\overline{r\to\infty}}\frac{M_f(r)}{r^d}< \infty\}.$$ Clearly $\mathcal{O}_d(M)\subseteq  \mathcal{O}'_d(M)$.
 \end{definition}
 We state some corollaries of theorem \ref{thm-10}.
\begin{cor}[Sharp Monotonicity I]\label{cor1}
Let $M$ be a complete K\"ahler manifold with nonnegative holomorphic sectional curvature, then $f\in \mathcal{O}'_d(M)$ if and only if $\frac{M_f(r)}{r^d}$ is nonincreasing.
\end{cor}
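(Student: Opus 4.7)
The plan is to reduce the statement to an elementary fact about convex functions, with Theorem \ref{thm-10} providing the crucial input. Assuming $f\not\equiv 0$ (the zero case being trivial), I would set $h(s):=\log M_f(e^s)$; by Theorem \ref{thm-10}, $\log M_f(r)$ is convex in $\log r$, so $h$ is convex on $\mathbb{R}$. Note that $M_f(r)>0$ for all $r>0$ by the identity principle on a connected manifold, so $h$ is well-defined. The monotonicity of $M_f(r)/r^d$ in $r$ is precisely the monotonicity of $h(s)-ds$ in $s$, and membership $f\in\mathcal{O}'_d(M)$ is precisely $\liminf_{s\to\infty}(h(s)-ds)<\infty$. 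Thus the corollary reduces to the one-variable assertion: a convex function $h$ on $\mathbb{R}$ satisfies $h(s)-ds$ nonincreasing if and only if $\liminf_{s\to\infty}(h(s)-ds)<\infty$.

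The ``if'' direction of the corollary is immediate: if $M_f(r)/r^d$ is nonincreasing then it is bounded above on any ray $[r_0,\infty)$, and hence $\liminf_{r\to\infty}M_f(r)/r^d<\infty$.

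For the converse I would argue by contrapositive. Suppose $h(s)-ds$ fails to be nonincreasing, so there exist $s_1<s_2$ with secant slope
\[
\frac{h(s_2)-h(s_1)}{s_2-s_1}=d+\epsilon
\]
for some $\epsilon>0$. Convexity of $h$ forces the slope of the secant from $s_2$ to any $s>s_2$ to be at least $d+\epsilon$, whence
\[
h(s)-ds\ \geq\ h(s_2)-ds_2+\epsilon(s-s_2)\ \longrightarrow\ \infty.
\]
Exponentiating, $M_f(r)/r^d\to\infty$, so $\liminf_{r\to\infty}M_f(r)/r^d=\infty$ and $f\notin\mathcal{O}'_d(M)$.

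There is essentially no obstacle once the reformulation via $h(s)=\log M_f(e^s)$ is made — the entire content has been absorbed into Theorem \ref{thm-10}, which guarantees convexity of $h$. The only bookkeeping items are handling $f\equiv 0$ and invoking the identity principle to ensure $\log M_f$ is finite-valued for $r>0$; neither requires more than a sentence.
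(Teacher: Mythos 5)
Your proposal is correct and is essentially the paper's argument in disguise: the paper also deduces the corollary purely from the convexity of $\log M_f$ in $\log r$ (Theorem \ref{thm-10}), sending the outer radius $r_3=\lambda_j\to\infty$ along the sequence realizing the liminf and then letting $\epsilon\to 0$, which is exactly the slope-monotonicity fact you isolate (stated contrapositively). The bookkeeping points you flag ($f\equiv 0$ and positivity of $M_f(r)$ via the identity principle) are fine and need no further elaboration.
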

\begin{remark}
In \cite{[C10]}\cite{[CM8]}, Colding, Colding-Minicozzi derived some important monotonicity formulae for harmonic functions.
\end{remark}

\begin{proof}
If  $\frac{M_f(r)}{r^d}$ is nonincreasing, it is obvious that $f\in \mathcal{O}'_d(M)$.
Let $f\in \mathcal{O}'_d(M)$. We need to show that $$\frac{M_f(r_1)}{r_1^d} \geq \frac{M_f(r_2)}{r_2^d}$$ for $r_1 \leq r_2$.
By rescaling, we may assume $r_1 = 1$. By the assumption of corollary \ref{cor1}, given any $\epsilon > 0$, there exists a sequence $\lambda_j\to\infty$ such that, $$\log M_f(\lambda_j) \leq \log M_f(1) + (d+\epsilon)\log \lambda_j.$$  If we take $r_3 = \lambda_j$ sufficiently large in (\ref{eq-200}), then $$M_f(r_2) \leq M_f(r_1)r_2^{d+\epsilon}.$$ The corollary follows if $\epsilon\to 0$.
\end{proof}

\begin{cor}\label{cor0}
Let $M$ be a complete noncompact K\"ahler manifold with nonnegative holomorphic sectional curvature, then $\mathcal{O}'_d(M)= \mathcal{O}_d(M)$.
\end{cor}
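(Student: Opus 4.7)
The plan is to read off this corollary directly from the sharp monotonicity already established in Corollary \ref{cor1}. The inclusion $\mathcal{O}_d(M)\subseteq \mathcal{O}'_d(M)$ is automatic from the definitions, since $\liminf_{r\to\infty} M_f(r)/r^d \leq \limsup_{r\to\infty} M_f(r)/r^d$. So the only content is the reverse inclusion: finiteness of the liminf implies finiteness of the limsup.

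To prove this, I would take an arbitrary $f \in \mathcal{O}'_d(M)$ and apply Corollary \ref{cor1} to conclude that $r \mapsto M_f(r)/r^d$ is nonincreasing on $(0,\infty)$. Once we have a nonincreasing nonnegative function, its supremum on any tail $[r_0,\infty)$ is attained at the left endpoint. Since $f$ is holomorphic and hence continuous, $M_f(r_0)$ is finite for every $r_0>0$, so
\[
\limsup_{r\to\infty} \frac{M_f(r)}{r^d} \;\leq\; \frac{M_f(r_0)}{r_0^d} \;<\; \infty,
\]
which says exactly that $f \in \mathcal{O}_d(M)$. Combining both inclusions gives $\mathcal{O}'_d(M) = \mathcal{O}_d(M)$.

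There is no genuine obstacle here — the substantive work is entirely hidden inside Theorem \ref{thm-10} and the deduction of Corollary \ref{cor1}. The only conceptual point worth emphasizing is that, under nonnegative holomorphic sectional curvature, the three circle theorem is strong enough to upgrade a \emph{sequential} polynomial bound (along some sequence $\lambda_j \to \infty$) to a \emph{uniform} polynomial bound valid for all sufficiently large $r$, in exact analogy with the classical Hadamard situation on $\mathbb{C}$, where an entire function whose maximum modulus grows at most polynomially along some sequence of radii is automatically a polynomial.
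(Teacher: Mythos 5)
Your argument is correct and is exactly how the paper intends the corollary to be read: it is an immediate consequence of Corollary \ref{cor1}, since the monotonicity of $M_f(r)/r^d$ for $f\in\mathcal{O}'_d(M)$ bounds the ratio on every tail by its (finite) value at the left endpoint, while the inclusion $\mathcal{O}_d(M)\subseteq\mathcal{O}'_d(M)$ is definitional. The paper offers no separate proof, so there is nothing to compare beyond this.
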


\begin{cor}\label{cor-7}
Let $M$ be a complete K\"ahler manifold with nonnegative holomorphic sectional curvature. Let $d\geq 0$. Assume $f\in \mathcal{O}_{d+\epsilon}(M)$ for any $\epsilon>0$. Then $f\in \mathcal{O}_{d}(M)$.
\end{cor}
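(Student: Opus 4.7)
The plan is to reduce the statement to the sharp monotonicity in Corollary \ref{cor1} applied with exponent $d+\epsilon$, then let $\epsilon \to 0^+$. Concretely, by hypothesis $f \in \mathcal{O}_{d+\epsilon}(M) \subseteq \mathcal{O}'_{d+\epsilon}(M)$ for every $\epsilon > 0$, so Corollary \ref{cor1} tells us that
\begin{equation*}
\frac{M_f(r)}{r^{d+\epsilon}} \text{ is nonincreasing in } r
\end{equation*}
for each fixed $\epsilon > 0$. Fix any reference radius $r_0 > 0$. For $r \geq r_0$, monotonicity gives
\begin{equation*}
M_f(r) \leq M_f(r_0) \left(\frac{r}{r_0}\right)^{d+\epsilon}.
\end{equation*}
The key point is that the left hand side does not involve $\epsilon$, so I can pass to the limit $\epsilon \to 0^+$ on the right to obtain $M_f(r) \leq M_f(r_0) (r/r_0)^d$, which means $\overline{\lim}_{r \to \infty} M_f(r)/r^d \leq M_f(r_0)/r_0^d < \infty$.

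This already shows $f \in \mathcal{O}'_d(M)$, and by Corollary \ref{cor0} we have $\mathcal{O}'_d(M) = \mathcal{O}_d(M)$, completing the argument. There is no real obstacle: once Corollary \ref{cor1} and Corollary \ref{cor0} are in hand, the corollary is essentially an observation that the family of monotone estimates $\{M_f(r)/r^{d+\epsilon}\}_{\epsilon>0}$ is compatible in the limit $\epsilon \to 0^+$, because pointwise in $r$ the bound $M_f(r_0)(r/r_0)^{d+\epsilon}$ depends continuously on $\epsilon$. One could also phrase the argument directly via the three circle inequality \eqref{eq-200}: choosing $r_3$ arbitrarily large and using $\log M_f(r_3) \leq (d+\epsilon)\log r_3 + O(1)$ for each $\epsilon > 0$ forces the same upper bound with $d$ in place of $d+\epsilon$, which is just the slope of the best linear majorant at infinity of the convex function $\log M_f(e^t)$.
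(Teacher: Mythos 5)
Your argument is correct and is essentially the paper's proof: both rely on Corollary \ref{cor1} to get that $M_f(r)/r^{d+\epsilon}$ is nonincreasing for each $\epsilon>0$ and then pass to the limit $\epsilon\to 0^+$ (the paper phrases the limit as monotonicity of $M_f(r)/r^d$ itself, you phrase it as a pointwise bound, which is the same thing). Note that your final bound $\overline{\lim}_{r\to\infty}M_f(r)/r^d\leq M_f(r_0)/r_0^d<\infty$ is already the definition of $f\in\mathcal{O}_d(M)$, so the detour through $\mathcal{O}'_d(M)$ and Corollary \ref{cor0} is unnecessary, though harmless.
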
\begin{proof}
According to corollary \ref{cor1}, $\frac{M(r)}{r^{d+\epsilon}}$ is nonincreasing for any $\epsilon>0$. Thus $\frac{M(r)}{r^d}$ is nonincreasing. This implies that $f\in \mathcal{O}_{d}(M)$.
\end{proof}

 Corollary \ref{cor1} implies the sharp dimension estimate for holomorphic functions with polynomial growth. For reader's convenience, we rewrite theorem \ref{thm2} below.
\begin{theorem}
 Let $M^n$ be a complete noncompact K\"ahler manifold with nonnegative holomorphic sectional curvature. Then for any $d > 0$, $dim(\mathcal{O}_d(M)) \leq dim(\mathcal{O}_d(\mathbb{C}^n))$. Moreover, if the equality holds for some positive integer $d$, $M$ is biholomorphic and isometric to $\mathbb{C}^n$.
 \end{theorem}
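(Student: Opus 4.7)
The plan is to derive the dimension bound from Corollary \ref{cor1} by a vanishing-order argument, and to treat the rigidity via a jet-theoretic construction of coordinates.

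For the dimension bound, fix $p\in M$ and observe that if $0\not\equiv f\in\mathcal{O}_d(M)$ has vanishing order $k=\mathrm{ord}_p(f)$, then Taylor expansion at $p$ gives $M_f(r)=O(r^k)$ as $r\to 0^+$, so $M_f(r)/r^d=O(r^{k-d})$. If $k>d$, the ratio tends to $0$ as $r\to 0^+$; but Corollary \ref{cor1} says it is nonincreasing in $r$, so its supremum coincides with this $r\to 0^+$ limit, forcing $M_f\equiv 0$ and hence $f\equiv 0$. Since $k$ is an integer, we get $k\leq\lfloor d\rfloor$. Consequently the holomorphic $\lfloor d\rfloor$-jet map $\mathcal{O}_d(M)\to\mathrm{Jet}_p^{\lfloor d\rfloor}(M)$ is injective, and the target has dimension $\binom{n+\lfloor d\rfloor}{n}=\dim\mathcal{O}_d(\mathbb{C}^n)$ (the space of holomorphic polynomials of degree $\leq d$ on $\mathbb{C}^n$), which yields the asserted inequality.

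For the rigidity, assume equality holds for a positive integer $d$, so the jet map is a linear isomorphism. Filtering $\mathcal{O}_d(M)$ by vanishing order at $p$ and using this isomorphism, every graded piece surjects onto the corresponding symmetric power of $T_p^{*\,1,0}M$. In particular, after a linear change of basis there exist $z_1,\dots,z_n\in\mathcal{O}_d(M)$ with $z_i(p)=0$ and $dz_i(p)$ orthonormal, and for each multi-index $\alpha$ with $|\alpha|\leq d$ a function $h_\alpha\in\mathcal{O}_d(M)$ whose $|\alpha|$-jet at $p$ is $z^\alpha$. The goal is to show that $F=(z_1,\dots,z_n)\colon M\to\mathbb{C}^n$ is a biholomorphic isometry. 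A first step is to apply Corollary \ref{cor1} at an arbitrary basepoint $q$ to each $z_i-z_i(q)$; combined with the monomial realizations $h_\alpha$ used to force $z_i\in\mathcal{O}_1(M)$, this yields $|dz_i(q)|\leq 1$ for every $q$, so $F$ is weakly distance-decreasing.

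The main obstacle is precisely this upgrade from ``growth of degree $\leq d$'' to ``linear growth'' for the $z_i$, and ultimately to pointwise $|dz_i(q)|\equiv 1$ needed for $F$ to be a global isometry. Unlike in Ni's theorem, nonnegative holomorphic sectional curvature alone does not give the heat-flow and Li-Yau-Hamilton machinery, so I plan to extract rigidity directly from the equality case of Corollary \ref{cor1}: whenever $M_f(r)/r$ is constant on an interval, one should be able to trace back through the Hessian-comparison step underlying Theorem \ref{thm-10} and conclude vanishing of the relevant holomorphic sectional curvatures along the associated geodesics. Together with a Liouville-type theorem for plurisubharmonic functions on $M$ (which one expects under the same curvature hypothesis), this should force the Kähler metric to be flat in every direction picked out by $F$, and completeness then promotes $F$ to a biholomorphic isometry onto $\mathbb{C}^n$.
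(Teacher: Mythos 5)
Your dimension bound is correct and is essentially the paper's argument: a nonzero $f\in\mathcal{O}_d(M)$ vanishing at $p$ to order $>d$ would satisfy $\lim_{r\to 0^+}M_f(r)/r^d=0$, and since Corollary \ref{cor1} makes $M_f(r)/r^d$ nonincreasing, its supremum is that limit, forcing $f\equiv 0$; injectivity of the $\lfloor d\rfloor$-jet map and the count $\binom{n+\lfloor d\rfloor}{n}=\dim\mathcal{O}_d(\mathbb{C}^n)$ then give the inequality.

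The rigidity half has a genuine gap. First, the step in which the monomial realizations $h_\alpha$ are said to ``force $z_i\in\mathcal{O}_1(M)$'' is unsupported: the equality $\dim\mathcal{O}_d(M)=\dim\mathcal{O}_d(\mathbb{C}^n)$ gives no information about $\mathcal{O}_1(M)$, and no mechanism is offered for why a function with linear $1$-jet must have linear growth. Second, even granting $z_i\in\mathcal{O}_1(M)$, the monotonicity points the wrong way for your conclusion $|dz_i(q)|\le 1$: Corollary \ref{cor1} applied at $q$ to $z_i-z_i(q)$ says $M(r)/r$ is nonincreasing in $r$, so $|dz_i(q)|=\lim_{r\to 0^+}M(r)/r=\sup_r M(r)/r$ is bounded \emph{below} by the growth rate at infinity, not above; you cannot extract a distance-decreasing property this way. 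Third, your closing paragraph correctly names the right mechanism (equality in the Hessian comparison forces vanishing of $R_{YJYJYY}$ along the relevant geodesics), but you propose to feed it the constancy of $M_{z_i}(r)/r$ for the coordinate functions, which you have not established; and the appeal to a Liouville theorem for plurisubharmonic functions is not connected to the rest of the argument.

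The paper's route avoids all of this. For each $p$ and each unit $X\in T_pM$, surjectivity of the $d$-jet map produces $f\in\mathcal{O}_d(M)$ with $f=z_1^d+O(r^{d+1})$ in a chart adapted to $X$. This $f$ vanishes to order exactly $d$, so Corollary \ref{cor1} (nonincreasing) and Corollary \ref{cor2} (nondecreasing, since the vanishing order is at least $d$) combine to give $M_f(r)/r^d\equiv\mathrm{const}$, with no need to change the exponent from $d$ to $1$. Equality throughout the maximum-principle argument of Theorem \ref{thm-10} then forces equality in the Hessian comparison at the maximum points $q(r)\in\partial B(p,r)$, i.e. $R_{YJYJYY}=0$ for $Y=\nabla r$ at $q(r)$; the Taylor expansion of $f$ shows that $Y$ subconverges into $\mathrm{span}\{X,JX\}$ as $r\to 0^+$, whence $R_{XJXJXX}=0$ at $p$. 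Since $p$ and $X$ are arbitrary, $M$ is flat, and one concludes by passing to the universal cover $\mathbb{C}^n\to M$ and counting dimensions. If you want to salvage your outline, replace the coordinate functions $z_i$ by these degree-$d$ monomial-type functions and run the equality analysis on $M_f(r)/r^d$ rather than on $M_{z_i}(r)/r$.
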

 
 \begin{proof}
 Suppose for some $d$, the reverse inequality holds. By linear algebra, for any point $p\in M$, there exists a nonzero holomorphic function $f \in \mathcal{O}_d(M)$ such that the vanishing order at $p$ is at least $[d]+1$. Here $[d]$ is the greatest integer less than or equal to $d$. Therefore $$\lim\limits_{r \to 0^+} \frac{M_f(r)}{r^d} = 0.$$ Corollary \ref{cor1} says $\frac{M_f(r)}{r^d}$ is nonincreasing. Thus $f \equiv 0$. This is a contradiction. We postpone the proof of the rigidity to the end of this section.
 \end{proof}
 \begin{remark}
 As we remarked in the introduction, one key point of Ni's method to theorem \ref{thm0} is that the plurisubharmonicity is preserved under the heat flow when the bisectional curvature is nonnegative \cite{[NT]}.  It does not seem obvious to the author whether this still holds when the holomorphic sectional curvature is nonnegative.\end{remark}

\begin{cor}
Let $M^n$ be a complete K\"ahler manifold with nonnegative holomorphic sectional curvature. Then the quotient field associated with the holomorphic functions with polynomial growth has transcendental degree at most $n$ over $\mathbb{C}$.
\end{cor}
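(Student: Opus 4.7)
The plan is to deduce this bound from the sharp dimension estimate in Theorem \ref{thm2} via a monomial counting argument. First I would reduce from the quotient field to the ring: let $R := \bigcup_{d \geq 0} \mathcal{O}_d(M)$. A standard fact of commutative algebra gives $\operatorname{trdeg}_{\mathbb{C}} \operatorname{Frac}(R) = \operatorname{trdeg}_{\mathbb{C}} R$, so it is enough to bound the number of algebraically independent elements in $R$ itself.

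Suppose for contradiction that $f_1, \dots, f_{n+1} \in R$ are algebraically independent over $\mathbb{C}$ with $f_i \in \mathcal{O}_{d_i}(M)$, and set $d := \max_i d_i$. For each positive integer $k$, the monomials
$$f_1^{a_1} \cdots f_{n+1}^{a_{n+1}}, \qquad a_1, \dots, a_{n+1} \geq 0, \quad a_1 + \cdots + a_{n+1} \leq k,$$
all lie in $\mathcal{O}_{dk}(M)$ and, by the algebraic independence of the $f_i$, are linearly independent over $\mathbb{C}$. Since the number of such multi-indices equals $\binom{k+n+1}{n+1}$, we obtain
$$\dim \mathcal{O}_{dk}(M) \;\geq\; \binom{k+n+1}{n+1} \;\sim\; \frac{k^{n+1}}{(n+1)!}, \qquad k \to \infty.$$

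On the other hand, Theorem \ref{thm2} gives
$$\dim \mathcal{O}_{dk}(M) \;\leq\; \dim \mathcal{O}_{dk}(\mathbb{C}^n) \;=\; \binom{\lfloor dk \rfloor + n}{n} \;\sim\; \frac{(dk)^n}{n!},$$
which is of order $k^n$. For $k$ sufficiently large, the lower bound of order $k^{n+1}$ overtakes this upper bound of order $k^n$, a contradiction. Hence the transcendence degree is at most $n$.

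The only non-routine input is Theorem \ref{thm2}, already established in this section, so no serious further obstacle remains; the remaining content is a direct dimension count together with the elementary algebra fact that transcendence degree is preserved under passage to the fraction field.
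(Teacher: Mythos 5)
Your argument is correct and is precisely the ``standard Poincar\'e--Siegel type argument'' that the paper invokes by citing Mok: algebraic independence forces the $\binom{k+n+1}{n+1}\sim k^{n+1}$ monomials of degree $\le k$ to be linearly independent in $\mathcal{O}_{dk}(M)$, which contradicts the $O(k^n)$ bound from Theorem \ref{thm2} for large $k$. The reduction to the ring via $\operatorname{trdeg}$ of the fraction field is the standard commutative-algebra fact, so your writeup is simply a self-contained version of the proof the paper leaves to the reference.
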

\begin{proof}
The proof follows from the standard Poincare-Siegel type argument. Please refer to \cite{[Mo]} for details.
\end{proof}
\begin{remark}
Analogous results were proved in \cite{[N1]}\cite{[CM6]}.
\end{remark}

\begin{definition}
On a manifold $M$, two metrics $g_1, g_2$ are quasi-isometric if there exist $C_1, C_2>0$ such that for any $p, q\in M$,
$$\frac{1}{C_1}d_{g_1}(p, q)-C_2\leq d_{g_2}(p, q) \leq C_1d_{g_1}(p, q)+C_2.$$
\end{definition}

\begin{cor}
Let $g$ be a complete K\"ahler metric on $\mathbb{C}^n$ which is quasi-isometric to the Euclidean metric $g_0$. Assume $g$ has nonnegative holomorphic sectional curvature, then $g$ is flat.
\end{cor}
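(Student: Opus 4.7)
The plan is to reduce this corollary to the rigidity part of Theorem \ref{thm2} applied to the degree $d=1$. The key observation is that the quasi-isometry assumption produces many holomorphic functions with small polynomial growth measured in the metric $g$, and in fact enough to saturate the bound of Theorem \ref{thm2}.

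First I would exploit that $M$ is diffeomorphic (indeed the underlying complex manifold is biholomorphic) to $\mathbb{C}^n$, so the global holomorphic coordinate functions $z_1,\dots,z_n$ are defined on $M$ and are, along with $1$, linearly independent in $\mathcal{O}(M)$. Writing $r_{g_0}$ and $r_g$ for the distance functions from the origin in the Euclidean and $g$-metrics, the quasi-isometry assumption gives $r_{g_0}(x)\le C_1 r_g(x)+C_2$. Since $|z_i(x)|\le r_{g_0}(x)$, we conclude
\[
M^g_{z_i}(r)=\sup_{B_g(0,r)}|z_i|\le C_1 r+C_2,
\]
so $z_i\in \mathcal{O}_1(M,g)$. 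Together with the constant function $1$, this produces $n+1$ linearly independent elements of $\mathcal{O}_1(M,g)$, hence $\dim\mathcal{O}_1(M,g)\ge n+1=\dim \mathcal{O}_1(\mathbb{C}^n)$.

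On the other hand, since $g$ has nonnegative holomorphic sectional curvature, Theorem \ref{thm2} applied with $d=1$ gives the reverse inequality $\dim\mathcal{O}_1(M,g)\le\dim \mathcal{O}_1(\mathbb{C}^n)$. Equality therefore holds for the positive integer $d=1$, and the rigidity statement of Theorem \ref{thm2} implies that $(\mathbb{C}^n,g)$ is biholomorphic and isometric to the standard Euclidean $\mathbb{C}^n$. In particular $g$ is flat.

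There are no real obstacles once one recognizes this as a rigidity application; the only thing to verify carefully is that the quasi-isometry hypothesis (rather than a bi-Lipschitz hypothesis) is sufficient to place each $z_i$ in $\mathcal{O}_1$ with respect to $g$. The additive constant $C_2$ in the definition of quasi-isometry is harmless because $\mathcal{O}_1$ only controls the leading asymptotic, and a bounded shift of the argument of $M^g_{z_i}$ does not change the limsup of $M^g_{z_i}(r)/r$.
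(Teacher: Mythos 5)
Your proposal is correct and is essentially the paper's own argument: the paper likewise observes that the quasi-isometry makes the linear coordinate functions lie in $\mathcal{O}_1(M,g)$, forcing equality in the dimension bound of theorem \ref{thm2} for $d=1$, and then invokes the rigidity statement. You have simply filled in the details the paper leaves implicit.
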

\begin{proof}
According to the assumption, all linear holomorphic functions on $(\mathbb{C}^n, g_0)$ are of linear growth with respect to $g$. According to the rigidity in theorem \ref{thm2}, $g$ is flat.
\end{proof} 
 
 \begin{cor}[Sharp Monotonicity II]\label{cor2}
Let $M$ be a complete K\"ahler manifold with nonnegative holomorphic sectional curvature, $f\in \mathcal{O}(M)$. If the vanishing order of $f$ at $p$ is at least $k$, $\frac{M_f(r)}{r^k}$ is nondecreasing.
\end{cor}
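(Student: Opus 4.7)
The plan is to run the three circle inequality \eqref{eq-200} from Theorem \ref{thm-10} in the opposite direction from Corollary \ref{cor1}: instead of sending $r_3 \to \infty$ to control large $r$ using a growth bound at infinity, I would send $r_1 \to 0^+$ to control small $r$ using the vanishing order bound at $p$. The case $k=0$ is trivial (maxima over nested balls are monotone), so I assume $k \geq 1$, which forces $f(p)=0$.

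The first step is to extract a usable bound from the vanishing order hypothesis. In a chart of holomorphic coordinates centered at $p$, the Taylor series of $f$ starts at homogeneous degree $k$, so $|f(x)| \leq C \rho(x)^k$ for $x$ in some Euclidean ball, where $\rho$ is the Euclidean distance. Since $\rho$ is comparable to $r(x,p)$ on a neighborhood of $p$, this yields a constant $C>0$ and $r_0>0$ such that $M_f(r_1) \leq C r_1^k$ for all $0 < r_1 < r_0$, i.e.\ $\log M_f(r_1) \leq k \log r_1 + \log C$.

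Next, for fixed $0 < r_2 < r_3$ and any $0 < r_1 < \min(r_2, r_0)$, I would rewrite \eqref{eq-200} as
\begin{equation*}
\log M_f(r_2) \leq \frac{\log(r_3/r_2)}{\log(r_3/r_1)} \log M_f(r_1) + \frac{\log(r_2/r_1)}{\log(r_3/r_1)} \log M_f(r_3).
\end{equation*}
Plugging in the upper bound on $\log M_f(r_1)$ and letting $r_1 \to 0^+$, the coefficient $\log(r_3/r_2)/\log(r_3/r_1)$ decays like $1/|\log r_1|$, so $\log C$ is killed while $k \log r_1$ combines with it to produce the limit $-k \log(r_3/r_2)$; meanwhile $\log(r_2/r_1)/\log(r_3/r_1) \to 1$. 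This gives $\log M_f(r_2) \leq \log M_f(r_3) - k \log(r_3/r_2)$, which rearranges to $M_f(r_2)/r_2^k \leq M_f(r_3)/r_3^k$, i.e.\ the claimed monotonicity.

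The only real subtlety is when $M_f(r_1) = 0$ for some $r_1 > 0$: then $\log M_f(r_1) = -\infty$ and the three circle inequality in its written form requires some care. However, either $f \equiv 0$ on an open set (hence globally by unique continuation, and the conclusion is trivial), or $M_f(r_1) > 0$ for all $r_1 > 0$, in which case the upper bound $\log M_f(r_1) \leq k \log r_1 + \log C$ controls both the sign and the rate of divergence, and the limit argument above proceeds unchanged. This is the main bookkeeping point; everything else is a direct consequence of the convexity established in Theorem \ref{thm-10}.
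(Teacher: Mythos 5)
Your proposal is correct and follows essentially the same route as the paper: both use the three circle inequality with the inner radius sent to $0$, where the vanishing order controls $\log M_f(r_1)$. The only cosmetic difference is bookkeeping — you use the Taylor bound $M_f(r_1)\leq Cr_1^k$ and take $r_1\to 0^+$ directly, while the paper uses $\log M_f(r)\leq \log M_f(1)+(k-\epsilon)\log r$ and lets $\epsilon\to 0$ afterwards.
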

\begin{remark}
Recall the classical Schwarz Lemma:  $f$ is a holomorphic function from the unit disk $D^2$ to $D^2$, with $f(0) = 0$. Then $|f(z)|\leq |z|$ and $|f'(0)|\leq 1$. 
This corollary could be regarded as a generalization of the Schwarz lemma: Just endow $D^2$ with the standard Euclidean metric and apply the corollary.\end{remark}
\begin{proof}
We need to show $$\frac{M_f(r_1)}{r_2^k} \leq \frac{M_f(r_2)}{r_3^k}$$ for $r_2\leq r_3$. By rescaling, we assume $r_3 = 1$. Since the vanishing order of $f$ at $p$ is at least $k$, given any small $\epsilon >0$,
for $r$ sufficiently small, $$\log M_f(r) \leq \log M_f(1) + (k-\epsilon)\log r.$$ Take $r_1 = r$ sufficiently small in (\ref{eq-200}). We obtain $$M_f(r_2) \leq M_f(r_3)r_2^{k-\epsilon}.$$ The corollary follows if $\epsilon \to 0$.
 \end{proof} 
\begin{cor}[Weak monotonicity for mean values]\label{cor-10}
Let $M^n$ be a complete K\"ahler manifold with nonnegative holomorphic bisectional curvature and $p\in M$.   Let $f\in \mathcal{O}(M)$ and define $A_f(r) = \frac{\int_{B(p, r)}|f|^2}{Vol(B(p, r))}$.  There exists a constant $C=C(n)$ such that
\begin{itemize}
\item $f\in \mathcal{O}_d(M)$ if and only if $\frac{CA_f(r_1)}{(\frac{r_1}{2})^{2d}}\geq \frac{A_f(r_2)}{r_2^{2d}}$ for any  $0<r_1\leq r_2$.
\item $f$ vanishes at $p$ with order at least $k$ if and only if $\frac{A_f(r_1)}{2^{2k}r_1^{2k}C}\leq \frac{A_f(r_2)}{r_2^{2k}}$ for any $0< 2r_1\leq r_2$.
\end{itemize}
\end{cor}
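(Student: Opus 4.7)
The plan is to bridge the sharp $L^\infty$ monotonicity established in Corollaries~\ref{cor1} and~\ref{cor2} with the $L^2$-average monotonicity claimed here, by means of two simple comparison estimates between $M_f(r)$ and $A_f(r)$. Since $|f|^2$ is plurisubharmonic, it is subharmonic on $M$; and since the holomorphic bisectional curvature is nonnegative, the Ricci curvature is nonnegative, so both Bishop--Gromov volume doubling and the mean value inequality for nonnegative subharmonic functions hold with constants depending only on $n$.

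First I would establish the two bridge inequalities
\[
A_f(r) \;\leq\; M_f(r)^2 \;\leq\; C(n)\, A_f(2r).
\]
The left inequality is immediate. For the right one, choose $x_0 \in \overline{B(p,r)}$ with $|f(x_0)| = M_f(r)$; then $B(x_0, r) \subset B(p, 2r)$, and applying the mean value inequality on $B(x_0, r)$ together with volume doubling to control the ratio $\mathrm{Vol}(B(p,2r))/\mathrm{Vol}(B(x_0,r))$ by a dimensional constant yields the claim.

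With these in hand the first equivalence is a one-line chain. If $f \in \mathcal{O}_d(M)$, Corollary~\ref{cor1} says $M_f(r)/r^d$ is nonincreasing, so for $r_1 \leq r_2$,
\[
\frac{A_f(r_2)}{r_2^{2d}} \;\leq\; \left(\frac{M_f(r_2)}{r_2^d}\right)^2 \;\leq\; \left(\frac{M_f(r_1/2)}{(r_1/2)^d}\right)^2 \;\leq\; \frac{C(n)\, A_f(r_1)}{(r_1/2)^{2d}},
\]
where the last step applies the bridge inequality at radius $r_1/2$. Conversely, fixing $r_1$ gives $A_f(r_2) = O(r_2^{2d})$, whence $M_f(r_2/2)^2 \leq C(n)\, A_f(r_2) = O(r_2^{2d})$, so $f \in \mathcal{O}_d(M)$. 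The second equivalence is entirely analogous, using Corollary~\ref{cor2}: if the vanishing order at $p$ is at least $k$ then $M_f(r)/r^k$ is nondecreasing, and for $2r_1 \leq r_2$,
\[
\frac{A_f(r_1)}{r_1^{2k}} \;\leq\; \left(\frac{M_f(r_1)}{r_1^k}\right)^2 \;\leq\; \left(\frac{M_f(r_2/2)}{(r_2/2)^k}\right)^2 \;\leq\; \frac{C(n)\, A_f(r_2)}{(r_2/2)^{2k}} \;=\; \frac{2^{2k} C(n)\, A_f(r_2)}{r_2^{2k}}.
\]
For the converse direction, fixing $r_2$ and letting $r_1 \to 0$ yields $A_f(r) = O(r^{2k})$ as $r \to 0$, and comparing with the Taylor expansion of $f$ in local holomorphic coordinates at $p$ forces the vanishing order to be at least $k$.

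The main obstacle is essentially cosmetic: the factor of $2$ in the radius in the bridge inequality is the source of the $(r_1/2)^{2d}$ and $2^{2k}\,r_1^{2k}$ factors in the statement, and tracking it precisely is the whole point of the argument. No new curvature input beyond nonnegative Ricci (and hence volume doubling plus the mean value inequality) is needed.
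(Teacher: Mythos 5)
Your proof is correct and follows essentially the same route as the paper: the Li--Schoen mean value inequality plus Bishop--Gromov volume comparison give the bridge $A_f(r)\leq M_f(r)^2\leq C(n)A_f(2r)$ (the paper writes the equivalent $(M_f(r))^2\geq A_f(r)\geq C'(M_f(r/2))^2$), and then the statement is chained from Corollaries~\ref{cor1} and~\ref{cor2} exactly as you do. You have merely written out the details the paper leaves implicit, including the converse directions.
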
 
\begin{proof}
According to the mean value inequality by Li and Schoen \cite{[LS]}, if $u$ is a nonnegative subharmonic function, there exists a positive constant $C(n)$ depending only on dimension such that $$C(n)\frac{\int_{B(p, r)}u^2}{Vol(B(p, r))}\geq (u(p))^2.$$  Take $u = |f|$, then $u$ is subharmonic. Together with the volume comparison, we find that there exists a constant $C'$ depending only on $n$ such that 
$$(M_f(r))^2 \geq A_f(r)\geq C'(M_f(\frac{r}{2}))^2$$ for any $r>0$. Then corollary \ref{cor-10} follows from corollary \ref{cor1} and corollary \ref{cor2}.
\end{proof}
 Now we turn to the proof of theorem \ref{thm-10}.
 \begin{proof}
 First we prove that if $M$ has nonnegative holomorphic sectional curvature, $M$ satisfies the three circle theorem.
 We will use the following Hessian comparison theorem which appears in \cite{[LW2]} by Li and Wang:
 \begin{theorem}\label{thm3}
 Let $M$ be a complete K\"ahler manifold with nonnegative holomorphic sectional curvature. Let $r$ be the distance function to a point $p\in M$. Define $e_1 = \frac{1}{\sqrt{2}}(\nabla r - \sqrt{-1}J\nabla r)$. Then when $r$ is smooth, $r_{1\overline{1}} \leq \frac{1}{2r}$. Thus $$(\log r)_{1\overline{1}} =\frac{r_{1\overline{1}}}{r} - \frac{r_1r_{\overline{1}}}{r^2}\leq 0.$$
 \end{theorem}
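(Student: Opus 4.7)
The plan is to reduce the bound on the complex Hessian to the classical Riemannian Hessian comparison applied in a single carefully chosen direction. Writing $X = \nabla r$ so that $e_1 = \tfrac{1}{\sqrt{2}}(X - \sqrt{-1}\,JX)$, a short expansion using the symmetry of the real Hessian $\operatorname{Hess}(r)$ gives
\[
r_{1\bar{1}} = \operatorname{Hess}(r)(e_1,\bar{e}_1) = \tfrac{1}{2}\bigl(\operatorname{Hess}(r)(X,X) + \operatorname{Hess}(r)(JX,JX)\bigr),
\]
since the imaginary cross-terms $\operatorname{Hess}(r)(X,JX) - \operatorname{Hess}(r)(JX,X)$ cancel. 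At a smooth point $x = \gamma(r)$ of $r$, let $\gamma$ be the unit-speed minimizing geodesic from $p$ to $x$. Then $r\circ\gamma(t) = t$ is affine along $\gamma$, which forces $\operatorname{Hess}(r)(X,X) = 0$ at $x$. Hence $r_{1\bar{1}} = \tfrac{1}{2}\operatorname{Hess}(r)(JX,JX)$, and it remains to bound the real Hessian of $r$ in the single direction $JX$.

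The key geometric observation is that the $2$-plane $\operatorname{span}(X,JX)$ is both $J$-invariant and parallel along $\gamma$. Indeed, $X = \dot\gamma$ is parallel because $\gamma$ is a geodesic, and $J$ is parallel on a K\"ahler manifold, so $JX$ is parallel as well. Therefore the sectional curvature $K(X,JX)$ along $\gamma$ coincides at every point with the holomorphic sectional curvature of that plane, which is nonnegative by hypothesis. The standard Hessian comparison, via the Riccati equation satisfied by $t\mapsto \operatorname{Hess}(r)(JX,JX)|_{\gamma(t)}$ compared to the flat model, then yields $\operatorname{Hess}(r)(JX,JX) \le 1/r$, and consequently $r_{1\bar{1}} \le \tfrac{1}{2r}$.

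For the inequality on $\log r$, I would first compute
\[
r_1 = e_1(r) = \tfrac{1}{\sqrt{2}}\bigl(|\nabla r|^2 - \sqrt{-1}\, g(J\nabla r, \nabla r)\bigr) = \tfrac{1}{\sqrt{2}},
\]
using $|\nabla r| = 1$ and the skew-symmetry of $J$ with respect to $g$, so that $r_1 r_{\bar{1}} = \tfrac{1}{2}$. The identity $(\log r)_{1\bar{1}} = r_{1\bar{1}}/r - r_1 r_{\bar{1}}/r^2$ is then just the chain rule, equivalently the formula $\partial\bar\partial\log r = \partial\bar\partial r / r - \partial r \wedge \bar\partial r / r^2$. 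Substituting the two computations yields $(\log r)_{1\bar{1}} \le \tfrac{1}{2r^2} - \tfrac{1}{2r^2} = 0$. The only substantive step is the Riemannian Hessian comparison; the conceptual point is recognizing that the hypothesis on \emph{holomorphic} sectional curvature is exactly what one needs along the specific parallel family of $J$-invariant $2$-planes generated by the radial geodesic, and that the radial direction itself contributes nothing to $r_{1\bar 1}$.
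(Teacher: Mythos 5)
Your proof is correct, but it runs along a genuinely different track from the paper's. The paper (following Li--Wang) works entirely in complex notation: it applies the Bochner formula to $|\nabla r|^2$ in the $e_1\bar e_1$ direction, discards nonnegative terms to obtain the complex Riccati inequality $\tfrac{\partial r_{1\overline 1}}{\partial r}+2r_{1\overline 1}^2+\tfrac12 R_{1\overline 1 1\overline 1}\le 0$, and integrates from the asymptotics $r_{1\overline 1}\sim \tfrac1{2r}$. You instead split $r_{1\overline 1}=\tfrac12\bigl(\operatorname{Hess}(r)(X,X)+\operatorname{Hess}(r)(JX,JX)\bigr)$, kill the radial term, and run the classical real Riccati comparison in the single parallel direction $J\dot\gamma$; the parallelism of $J$ is exactly what makes $\operatorname{span}(\dot\gamma,J\dot\gamma)$ a parallel $J$-invariant family, so only the holomorphic sectional curvature enters. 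The two arguments are the real and complex incarnations of the same ODE comparison, but yours makes the geometric content more transparent, while the paper's complex computation is the one that generalizes directly to the curvature-lower-bound version (Theorem 12/\ref{thm8}), where the inequality $2r_{1\overline 1}^2+\partial_r r_{1\overline 1}+\tfrac12 R_{1\overline 1 1\overline 1}\le 0$ is reused verbatim. One point to state carefully: you cannot cite the ``standard'' Hessian comparison theorem as usually formulated, since that requires a lower bound on \emph{all} radial sectional curvatures; what you need is the single-direction version, where the Riccati \emph{equation} for the shape operator $S$ yields only a differential \emph{inequality} $h'\le -h^2-K(\dot\gamma,J\dot\gamma)$ for $h(t)=\langle S(J\dot\gamma),J\dot\gamma\rangle$ after the Cauchy--Schwarz step $|S(J\dot\gamma)|^2\ge \langle S(J\dot\gamma),J\dot\gamma\rangle^2$ (this is the real counterpart of the paper's step $r_{i1}r_{\overline i\overline 1}+r_{i\overline 1}r_{\overline i 1}\ge 2r_{1\overline 1}^2$). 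With that sentence added, your argument is complete.
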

 Since the proof is simple, we include it here for reader's convenience.
 \begin{proof}
Let $x$ be a point outside the cut locus of $p$ on $M$. Let $e_i$ be a unitary frame at $x\in M$ and $e_1 = \frac{1}{\sqrt{2}}(\nabla r - \sqrt{-1}J\nabla r)$. We parallel transport the unitary frame along the minimal geodesic from $p$ to $x$. Consider the Bochner formula
 \begin{equation}
\begin{aligned}
 0 &= \frac{1}{2}|\nabla r|^2_{1\overline 1} \\&= r_{i1\overline{1}}r_{\overline{i}} + r_{i1}r_{\overline{i}\overline{1}}+ r_{i\overline{1}}r_{\overline{i}1}+r_{i}r_{\overline{i}1\overline{1}}\\&\geq
 2r_{1\overline{1}}^2 + \frac{\partial r_{1\overline{1}}}{\partial r} + \frac{1}{2}R_{1\overline{1}1\overline{1}}.
 \end{aligned}
 \end{equation}
 
Together with the initial condition of $r_{1\overline{1}}$ for $r\to 0$, we obtain the proof.
 \end{proof}
Define $$F(x) = \log(\frac{r_3}{r})\log M_f(r_1) + \log(\frac{r}{r_1})\log M_f(r_3)$$ for $0<r_1\leq r=dist(x, p) \leq r_3$. Define $$G(x) = \log(\frac{r_3}{r_1})\log |f(x)|.$$ It is clear that $F(x)\geq G(x)$ when $x\in \partial B(p, r_1)$  and $x\in \partial B(p, r_3)$. Suppose $F(x) < G(x)$ somewhere inside the annulus. Let $q$ be the maximum point of $G(x)-F(x)$.
If $q$ is not on the cut locus of $p$, $$\sqrt{-1}\partial\overline\partial (G(x) - F(x))|_{x=q} \leq 0;$$ $$\nabla F(q) = \nabla G(q)= C\nabla r(q)$$ where $C$ is some constant (this point will be used in the proof of theorem \ref{thm4}). In particular, $$G_{1\overline{1}} - F_{1\overline{1}}\leq 0$$ at $q$, where $e_1 = \frac{1}{\sqrt{2}}(\nabla r - \sqrt{-1}J\nabla r)$.  The Poincare-Lelong equation says $$\frac{\sqrt{-1}}{2\pi}\partial\overline\partial\log |f|^2 = [D]$$ where $D$ is the divisor of $f$. Since $G(x)-F(x)$ has the maximum at $q$, $|f(q)| \neq 0$. Therefore $$G_{1\overline{1}} = 0.$$ As $f$ is a holomorphic on $M$, we may assume $M_f(r_1) < M_f(r_3)$, otherwise $f$ is a constant.  By theorem \ref{thm3}, $$F_{1\overline{1}} \leq 0.$$ Therefore $$G_{1\overline{1}} - F_{1\overline{1}}\geq 0.$$ If the strict inequality holds, this will be a contradiction. For a small constant $\epsilon>0$, we define $$F_\epsilon(x) =a_\epsilon\log(r-\epsilon)+b_\epsilon.$$ Here $a_\epsilon$ and $b_\epsilon$ are chosen so that $$F_\epsilon(r_1)= M_f(r_1); F_\epsilon(r_3) = M_f(r_3).$$  Clearly when $\epsilon\to 0$, $F_\epsilon \to F$. Since $M_f(r_3) > M_f(r_1)$, $a_\epsilon >0$.
 Now we compute $$(\log (r-\epsilon))_{1\overline{1}} = \frac{r_{1\overline{1}}}{r-\epsilon}-\frac{1}{2(r-\epsilon)^2} < 0$$ by theorem \ref{thm3}. Therefore $$(G(x)-F_\epsilon(x))_{1\overline{1}}>0.$$ This implies that it cannot assume the maximum inside the annulus if we ignore the cut locus problem. Considering the boundary data, we find that for any small $\epsilon > 0$, $$
 G(x)-F_{\epsilon}\leq 0$$ inside the annulus.
 
 To complete the proof, we still need to handle the case when $q_\epsilon$ ($q_\epsilon$ is the maximum point of $G-F_\epsilon$) lies on the cut locus of $p$. We will adopt the trick of Calabi. Pick a number $\epsilon_1$ with $0< \epsilon_1 < \epsilon$.  Let $p_1$ be the point on the minimal geodesic connecting $p$ and $q_\epsilon$ such that $dist(p, p_1) = \epsilon_1$.  Define $$\hat{r}(x)= dist(p_1, x)$$ and consider $$F_{\epsilon, \epsilon_1} = a_\epsilon\log(\hat{r}+\epsilon_1-\epsilon)+b_\epsilon.$$ Then $$F_{\epsilon,  \epsilon_1}\geq F_\epsilon$$ by triangle inequality and $$F_{\epsilon}(q_\epsilon) =F_{\epsilon, \epsilon_1}(q_\epsilon).$$ Thus $G-F_{\epsilon, \epsilon_1}$ has maximum at $q_\epsilon$. Note $$(F_{\epsilon, \epsilon_1})_{1\overline{1} }< 0,$$ since $\epsilon_1<\epsilon$. Then we apply the maximum principle for $G-F_{\epsilon, \epsilon_1}$ at $q_\epsilon$ to get a contradiction. The proof of the sufficient part of theorem \ref{thm-10} follows if we let $\epsilon_1 \to 0$ and then $\epsilon\to 0$.
 
 We come to the necessary part of theorem \ref{thm-10}. The argument is local.  Consider the normal coordinate $U(z_1, z_2,...., z_n)$ at $p$. Let $\gamma = (z^1(t), z^2(t),...., z^n(t))$ be a geodesic emanating from $p$ and $t$ is the arc length.  Here we are assuming $z_i = \frac{1}{\sqrt{2}}(x_i+\sqrt{-1}y_i)$. This makes $|\frac{\partial}{\partial z_i}| = 1$ at $p$. Suppose $R_{1\overline 11\overline 1}<0$.
Recall the geodesic equation
\begin{equation}\label{eq100}
\frac{d^2 z^\lambda}{dt^2} = -\Gamma_{\mu\nu}^\lambda\frac{dz^\mu}{dt}\frac{dz^\nu}{dt}.
\end{equation}

Suppose the initial tangent vector of $\gamma$ is given by $X = \sum X^i\frac{\partial}{\partial z_i} +\sum \overline  X^i\frac{\partial}{\partial\overline{z_i}}$.
Note that at $p$, $R_{i\overline jk\overline l} =-\frac{\partial^2g_{i\overline j}}{\partial z_k\partial\overline{z_l}}$.
By (\ref{eq100}), \begin{equation}\label{eq101}
z^i(t) = tX^i + \frac{1}{6}\sum\limits_{j, k, l}R_{j\overline ik\overline l}(p)X^jX^k\overline{X^l}t^3+o(t^3).
\end{equation}
For the holomorphic function $z_1$, assume the maximum modulus on $B(p, r)$ is achieved at $q(r)\in \partial B(p, r)$ (note $q(r)$ might not be unique).  Let $exp$ be the exponential map and suppose $X_r$ is a unit tangent vector at $p$ with $exp_p(X_r, r) = q(r)$.  Then when $r\to 0$, $|X_r^1|\to \frac{1}{\sqrt{2}}$, \begin{equation}
\label{eq103} |X_r^i|\to 0\end{equation} for $i\neq 1$.
By (\ref{eq101}) and (\ref{eq103}),
\begin{equation}\label{eq102}
|z^1(q(t))|^2 = t^2|X_t^1|^2+\frac{1}{3}t^4\sum\limits_{j, k, l}Re(R_{j\overline 1k\overline l}(p)\overline{X_t^1}X_t^jX_t^k\overline{X_t^l})+o(t^4)\leq \frac{1}{2}t^2+\frac{1}{6}R_{1\overline 11\overline 1}|X_t^1|^4t^4
<\frac{1}{2}t^2 \end{equation} for all small $t$. In the inequality, we have used that $R_{1\overline 11\overline 1}<0$.

Let $M(r) = \max |z_1|$ on $B(p, r)$. Then $\frac{M(r)}{r} < \frac{1}{\sqrt{2}}$ for small $r>0$.  Note that $\lim\limits_{r\to 0}\frac{M(r)}{r} = \frac{1}{\sqrt{2}}$. If $M$ satisfies the three circle theorem, we have corollary \ref{cor2}. This is a contradiction. 
 
 \end{proof}

Now we complete the rigidity part of theorem \ref{thm2}.
It suffices to show that $M$ is flat (then we take the universal cover). We need to show that for any $p\in M$ and $X\in T_p(M)$,  $R_{XJXJXX}= 0$. There exists a local coordinate chart $(U, z_1, z_2,...z_n)$ containing $p$ such that $X = \frac{\partial}{\partial x_1}$ and $\frac{\partial}{\partial z_1}(p) = X - \sqrt{-1}JX$. Since $dim(\mathcal{O}_d(M)) = dim(\mathcal{O}_d(\mathbb{C}^n))$, there exists $f\in \mathcal{O}_d(M)$ such that the restriction of $f$ in $U$ is $$f(z_1, z_2,..., z_n) = z_1^d+O(r^{d+1}).$$ In particular, the vanishing order of $f$ at $p$ is $d$. Corollary \ref{cor1} and corollary \ref{cor2} imply $$\frac{M_f(r)}{r^d}\equiv Constant.$$ By the proof of theorem \ref{thm-10} and the definitions of $F$ and $G$, we find that $G(x)-F(x)$ has maximum $0$ on every geodesic ball $\partial B(p, r)$, say at $q(r)$.  By checking the equality of the Hessian comparison (theorem \ref{thm3}), we have that $R_{YJYJYY} = 0$ where $Y =  \nabla r$ at $q(r)$.  From the Taylor expansion of $f$, it is easy to see that the subsequential limit of $Y$ is on the tangent subspace of $p$ spanned by $X$ and $JX$, as $r\to 0^+$. Thus  $R_{XJXJXX} = 0$. This completes the proof of the rigidity.

\section{\bf{Asympototic behavior of holomorphic functions of polynomial growth}}
In this section, we prove that on K\"ahler manifolds with nonnegative bisectional curvature, any holomorphic function with polynomial growth is homogenous at infinity.
The proof involves the convergence of Riemannian manifolds which were established in \cite{[G]}, \cite{[CC1]}\cite{[CC2]}\cite{[CC3]}\cite{[CC4]} and \cite{[Che]}.
We shall review some background here.

Let $(M^n_i, y_i, \rho_i)$ be a sequence of pointed complete Riemannian manifolds, where $y_i\in M^n_i$ and $\rho_i$ is the metric on $M^n_i$. By Gromov's compactness theorem, if $(M^n_i, y_i, \rho_i)$ have a uniform lower bound of the Ricci curvature, then a subsequence converges to some $(M_\infty, y_\infty, \rho_\infty)$ in the Gromov-Hausdorff topology. See \cite{[G]} for the definition and basic properties of Gromov-Hausdorff convergence.
\begin{definition}
Let $K_i\subset M^n_i\to K_\infty\subset M_\infty$ in the Gromov-Hausdorff topology. Assume $\{f_i\}_{i=1}^\infty$ are functions on $M^n_i$, $f_\infty$ is a function on $M_\infty$.  
$\Phi_i$ are $\epsilon_i$-Gromov-Hausdorff approximations, $\lim\limits_{i\to\infty} \epsilon_i = 0$. If $f_i\circ \Phi_i$ converges to $f_\infty$ uniformly, we say $f_i\to f_\infty$ uniformly over $K_i\to K_\infty$.
\end{definition}
 In many applications, $f_i$ are equicontinuous. The Arzela-Ascoli theorem applies to the case when the spaces are different.  When $(M_i^n, y_i, \rho_i)\to (M_\infty, y_\infty, \rho_\infty)$ in the Gromov-Hausdorff topology, any bounded, equicontinuous sequence of functions $f_i$ has a subsequence converging uniformly to some $f_\infty$ on $M_\infty$.

As in section $9$ of \cite{[Che]}, we have the following definition.
\begin{definition}
If $\nu_i, \nu_\infty$ are Borel regular measures on $M_i^n, M_\infty$, we say that $(M^n_i, y_i, \rho_i, \nu_i)$ converges  to $(M_\infty, y_\infty, \rho_\infty, \nu_\infty)$ in the measured Gromov-Hausdorff sense, if $(M^n_i, y_i, \rho_i, \nu_i)\to (M_\infty, y_\infty, \rho_\infty, \nu_\infty)$ in the Gromov-Hausdorff topology and for any $x_i\to x_\infty$ ($x_i\in M_i, x_\infty\in M_\infty$), $r>0$, $\nu_i(B(x_i, r))\to \nu_\infty(B(x_\infty, r))$.
\end{definition}
For any sequence of manifolds with Ricci curvature lower bound, after suitable renormalization of the volume, there is a subsequence converging in the measured Gromov-Hausdorff sense. If the volume is noncollapsed, $\nu_\infty$ is just the $n$-dimensional Hausdorff measure of $M_\infty$. See \cite{[CC2]}.

For a Lipschitz function $f$ on $M_\infty$, define a norm $||f||^2_{1, 2} = ||f||^2_{L^2}+\int_{M_\infty}|Lip f|^2$, where $$Lip(f, x) =\lim\sup\limits_{y\to x}\frac{|f(y)-f(x)|}{d(x, y)}.$$
In \cite{[Che]}, a Sobolev space $H_{1, 2}$ is defined by taking the closure of the norm $||\cdot||_{1, 2}$ for Lipschitz functions.

\emph{Condition (1)}:
$M_\infty$ satisfies the volume doubling property if for any $r>0$, $x\in M_\infty$,  $\nu_\infty(B(x, 2r))\leq 2^n\nu_\infty(B(x, r))$.

\emph{Condition (2)}:
$M_\infty$ satisfies the weak Poincare inequality if $$\int_{B(x, r)}|f - \overline{f}|^2 \leq C(n)r^2\int_{B(x, 2r)}|Lip f|^2$$ for all Lipschitz functions.
Here $\overline f$ is the average of $f$ on $B(x, r)$.

In theorem $6.7$ of \cite{[CC4]}, it was proved that if $M_\infty$ satisfies the $\nu$-rectifiability condition, condition (1) and condition (2), then there is a unique differential $df$ for $f\in H_{1, 2}$.
If $f$ is Lipschitz, $\int |Lipf|^2 = \int |df|^2$. Moreover, the $H_{1, 2}$ norm becomes an inner product. Therefore $H_{1, 2}$ is a Hilbert space. Then there exists a unique self-adjoint operator $\Delta$ on $M_\infty$ such that $$\int_{M_\infty} <df, dg> = \int _{M_\infty}<\Delta f, g>$$ for all Lipschitz functions on $M_\infty$ with compact support (Of course we can extend the functions to Sobolev spaces). See theorem $6.25$ of \cite{[CC4]}.

If $M_i\to M_{\infty}$ in the measured Gromov-Hausdorff sense and that the Ricci curvature is nonnegative for all $M_i$, then the $\nu$-rectifiability of $M_\infty$ was proved in theorem $5.5$ in \cite{[CC4]}.
By the volume comparison, Condition (1) obviously holds for $M_\infty$. Condition (2) also holds. See \cite{[X]} for a proof.

In \cite{[Di1]}\cite{[X]}, the following lemma was proved:
\begin{lemma}\label{lemma-10}
Suppose $M_i$ has nonnegative Ricci curvature and $M_i\to M_\infty$ in the measured Gromov-Hausdorff sense. Let $f_i$ be Lipschitz functions on $B(x_i, 2r)\subset M_i$ satisfying $\Delta f_i = 0$; $|f_i|\leq L, |\nabla f_i|\leq L$ for some constant $L$. Assume $x_i\to x_\infty$, $f_i\to f_\infty$ on $M_\infty$. Then $\Delta f_\infty = 0$ on $B(x_\infty, r)$.
\end{lemma}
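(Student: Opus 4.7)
The plan is to verify $\Delta f_\infty=0$ in the weak Cheeger--Colding sense: for every Lipschitz function $\phi$ compactly supported in $B(x_\infty, r)$, I need to show
$$\int_{M_\infty}\langle df_\infty, d\phi\rangle\, d\nu_\infty=0.$$
The strategy is to manufacture a sequence of test functions $\phi_i$ on $M_i$ whose limit is $\phi$, invoke the trivial identity $\int_{M_i}\langle\nabla f_i,\nabla\phi_i\rangle\, dv_i=0$ that follows from $\Delta f_i=0$ by integration by parts, and then pass to the limit. The real content is to ensure that this gradient pairing converges to the desired expression on the limit space.

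First, I would fix $\phi\in\mathrm{Lip}_c(B(x_\infty,r))$ with $\mathrm{supp}(\phi)\subset B(x_\infty, r-2\delta)$ for some small $\delta>0$. Using the $\epsilon_i$-Gromov--Hausdorff approximations $\Phi_i\colon M_i\to M_\infty$, pull $\phi$ back to $\phi\circ\Phi_i$ and apply the McShane--Whitney extension together with a Lipschitz cutoff to obtain a genuine Lipschitz function $\phi_i$ on $M_i$ with $\mathrm{Lip}(\phi_i)\le\mathrm{Lip}(\phi)+o(1)$, $\mathrm{supp}(\phi_i)\subset B(x_i,r-\delta)$, and $\phi_i\to\phi$ uniformly over $M_i\to M_\infty$. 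Because $\phi_i$ is supported well inside the ball $B(x_i,2r)$ on which $\Delta f_i=0$, integrating by parts gives $\int_{M_i}\langle\nabla f_i,\nabla\phi_i\rangle\, dv_i=0$.

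To pass to the limit, I would use the polarization identity
$$\langle\nabla f_i,\nabla\phi_i\rangle=\tfrac12\bigl(|\nabla(f_i+\phi_i)|^2-|\nabla f_i|^2-|\nabla\phi_i|^2\bigr),$$
which reduces the task to the weak convergence of the energy measures. The Bochner formula combined with $\mathrm{Ric}\ge 0$ yields
$$\tfrac12\Delta|\nabla f_i|^2\ge|\mathrm{Hess}\, f_i|^2\ge 0,$$
so $|\nabla f_i|^2$ is subharmonic on $B(x_i,2r)$. Together with the uniform bound $|\nabla f_i|\le L$ and uniform convergence $f_i\to f_\infty$, results from \cite{[Che]}, \cite{[CC4]}, \cite{[Di1]}, \cite{[X]} give
$$|\nabla f_i|^2\, dv_i\rightharpoonup|df_\infty|^2\, d\nu_\infty$$
weakly as measures on compact subsets of $B(x_\infty,r)$. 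The parallel statements for $|\nabla\phi_i|^2\, dv_i$ (via Cheeger's differentiable structure on $M_\infty$, using the Lipschitz character of $\phi_i$) and for $|\nabla(f_i+\phi_i)|^2\, dv_i$ follow by the same mechanism, and polarization then delivers the desired conclusion.

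The hard part is exactly the weak convergence of the energy measures $|\nabla f_i|^2\, dv_i\to |df_\infty|^2\, d\nu_\infty$. The upper bound side is soft, coming from equi-Lipschitz control plus the measured Gromov--Hausdorff convergence $\nu_i\to\nu_\infty$; the lower bound side is where Bochner subharmonicity of $|\nabla f_i|^2$ enters decisively, through a mean value argument that promotes pointwise convergence of the gradients at regular points of the limit. Once this convergence is established, the harmonicity of $f_\infty$ propagates through the polarization identity essentially for free, and the proof concludes by letting $\delta\to 0$ to cover an arbitrary test function supported in $B(x_\infty,r)$.
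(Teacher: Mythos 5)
First, note that the paper itself does not prove this lemma: it is quoted from \cite{[Di1]} and \cite{[X]}, so there is no internal argument to compare yours against. Your overall strategy --- test against compactly supported Lipschitz functions, use $\int\langle\nabla f_i,\nabla\phi_i\rangle\,dv_i=0$, and pass to the limit --- is a legitimate and standard route, and the construction of $\phi_i$ by McShane--Whitney extension plus a cutoff is fine as far as it goes.

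The gap is in the limit passage. The polarization identity requires the weak convergence of \emph{three} energy measures, and only one of the three functions involved is harmonic. For the harmonic $f_i$, the convergence $|\nabla f_i|^2\,dv_i\rightharpoonup|df_\infty|^2\,d\nu_\infty$ is indeed where Bochner subharmonicity can be exploited, but as stated this is essentially the entire content of the lemma, and you defer it to the very references the lemma is being quoted from --- which is circular. More seriously, the claim that the energy measures of $\phi_i$ and of $f_i+\phi_i$ converge ``by the same mechanism'' fails: these functions are not harmonic, so $|\nabla\phi_i|^2$ is not subharmonic and no mean-value argument is available. For a general equi-Lipschitz, uniformly convergent sequence one has only lower semicontinuity of the energy; energy can persist in oscillations and fail to pass to the limit (already on $\mathbb{R}$, $\phi_i(x)=\frac{1}{i}\sin(ix)\to 0$ uniformly while $|\phi_i'|^2\,dx\rightharpoonup\frac{1}{2}\,dx$). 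A McShane extension of $\phi\circ\Phi_i$ has no reason to avoid this. To make the polarization (or the simpler weak--strong pairing) work, you must choose $\phi_i$ to be a \emph{recovery sequence}, i.e.\ one with $\int|\nabla\phi_i|^2\,dv_i\to\int|d\phi|^2\,d\nu_\infty$; the existence of such sequences is the Mosco-type convergence of the Dirichlet energies, a nontrivial theorem in the spirit of \cite{[CC3]}, \cite{[CC4]}, \cite{[Di1]}, not a by-product of the extension construction. Alternatively one can bypass polarization entirely and argue, as in \cite{[Di1]} and \cite{[X]}, via convergence of heat kernels and the characterization of harmonicity through the heat semigroup. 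As written, the central analytic step of your proof is asserted rather than established.
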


\begin{definition}
Let $M$ be a complete K\"ahler manifold and $f\in \mathcal{O}(M)$. The order at infinity is defined by $\overline{\lim\limits_{r\to\infty}}\frac{\log M_f(r)}{\log r}$, where $r$ is the distance to a fixed point $p$ on $M$, $M_f(r)$ is the maximal modulus of $f$ on $B(p, r)$.
\end{definition}
By corollary \ref{cor-7}, if $M$ have nonnegative holomorphic sectional curvature and $f\in \mathcal{O}(M)$ with order $d$ at infinity, then $f\in \mathcal{O}_d(M)$.
Now we can state the main result in this section:
\begin{theorem}\label{thm-99}
Let $M^n$ be a complete noncompact K\"ahler manifold with nonnegative bisectional curvature, $p\in M$, $r(x) =d(x, p)$.  Let $f\in \mathcal{O}_d(M)$ and suppose $d$ is the order at infinity. Then given any $K>1, \epsilon>0$,  there exists $R>0$ such that for any $r>R$, $x\in B(p, Kr)\backslash B(p, r)$, any point $y$ on the minimal geodesic connecting $p$ and $x$, we have $$|f(y)(r(x))^d-f(x)(r(y))^d|\leq \epsilon M_f(r)r^d.$$ In particular, if $|f(x)|\geq \delta M_f(r)$, $$|\frac{f(y)}{f(x)}-(\frac{r(y)}{r(x)})^d|\leq\frac{\epsilon}{\delta}.$$
\end{theorem}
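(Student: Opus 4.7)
The plan is to argue by contradiction using a blow-down (rescaling) argument, Gromov-Hausdorff compactness, and the sharp monotonicity from Corollary \ref{cor1}. Suppose the conclusion fails; then there exist $\epsilon>0$, $K>1$ and sequences $r_i\to\infty$, $x_i\in B(p,Kr_i)\setminus B(p,r_i)$, and $y_i$ on the minimal geodesic from $p$ to $x_i$ with
\[
|f(y_i)\,r(x_i)^d-f(x_i)\,r(y_i)^d|>\epsilon\,M_f(r_i)\,r_i^d.
\]

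The first step is to rescale: set $g_i:=r_i^{-2}g$ and $F_i:=f/M_f(r_i)$. Corollary \ref{cor1} applied to $F_i$ on $(M,g_i)$ makes $\max_{B_{g_i}(p,s)}|F_i|/s^d$ nonincreasing in $s$ with value $1$ at $s=1$, so $|F_i|\leq K^d$ on $B_{g_i}(p,K)$. The Cheng-Yau gradient estimate (applicable since nonnegative bisectional curvature implies nonnegative Ricci curvature) yields uniform $C^1$ bounds on compact sets. By Gromov's compactness theorem a subsequence converges in the pointed measured Gromov-Hausdorff sense to $(M_\infty,p_\infty)$. Invoking the theorem from \cite{[L]} cited in the introduction, after reducing via a de Rham splitting I may assume $M$ has maximal volume growth; by Cheeger-Colding, $M_\infty=C(Y)$ is then a metric cone with vertex $p_\infty$. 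Arzela-Ascoli supplies a continuous limit $F_\infty$ of $F_i$ on $M_\infty$, while $x_i\to x_\infty$ and $y_i\to y_\infty$ lie on a common ray from $p_\infty$. Dividing the failure inequality by $M_f(r_i)r_i^d$ and passing to the limit gives
\[
|F_\infty(y_\infty)\,d_\infty(p_\infty,x_\infty)^d-F_\infty(x_\infty)\,d_\infty(p_\infty,y_\infty)^d|\geq\epsilon.
\]

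The crux is to show $F_\infty$ is homogeneous of degree $d$ along rays from $p_\infty$, which directly contradicts the last inequality. Set $\phi(r):=M_f(r)/r^d$ and let $\alpha(r):=(d/d\log r)^+\log M_f(r)$. Log-convexity from the three circle theorem makes $\alpha$ nondecreasing, while Corollary \ref{cor1} gives $\alpha\leq d$. Since $d$ is the order at infinity, I claim $\alpha(r)\to d$: otherwise $\log M_f(r)\leq(d-\delta)\log r+C$ for some $\delta>0$, contradicting the order. Consequently $\log(\phi(sr_i)/\phi(r_i))=\int_1^s(\alpha(r_i t)-d)/t\,dt\to 0$ for every fixed $s>0$, so $\max_{B_\infty(p_\infty,s)}|F_\infty|=s^d$ for all $s>0$. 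This saturates the three circle inequality on $M_\infty$ at every pair of radii. Running the proof of Theorem \ref{thm-10} on the smooth part of $C(Y)$ together with the maximum-principle argument from the rigidity portion of Theorem \ref{thm2}, one concludes $F_\infty(t\theta)=t^dF_\infty(\theta)$ for $t\geq 0$, $\theta\in Y$, producing the required contradiction.

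The main obstacle is this last homogeneity step on the possibly singular metric cone $M_\infty=C(Y)$. It requires the Cheeger-Colding regular/singular stratification, control of the distance function and its complex Hessian near the vertex and along singular strata, and verification that $F_\infty$---the uniform limit of holomorphic functions, whose real and imaginary parts are harmonic on $M_\infty$ by Lemma \ref{lemma-10}---is regular enough on the smooth part of the cone to invoke the complex Hessian comparison. The de Rham splitting case is an orthogonal technical issue, handled by induction on complex dimension.
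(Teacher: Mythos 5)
Your setup—contradiction, rescaling $g_i=r_i^{-2}g$, normalizing $F_i=f/M_f(r_i)$, uniform bounds from Corollary \ref{cor1} and Cheng--Yau, convergence to a metric cone $C(Y)$, and the computation that $M_{F_\infty}(s)=s^d$ for all $s$—coincides with the paper's argument. But at the step you yourself identify as the crux, the homogeneity of $F_\infty$ along rays, your proposed mechanism does not work and the actual key idea is missing. You want to ``run the proof of Theorem \ref{thm-10} and the rigidity argument of Theorem \ref{thm2} on the smooth part of $C(Y)$.'' The limit space is only a metric measure cone: it carries no K\"ahler structure, $F_\infty$ is only a Lipschitz harmonic function (via Lemma \ref{lemma-10}), not a holomorphic function, and the complex Hessian comparison for the distance function has no meaning there. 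Even setting aside regularity, saturation of the three circle inequality is a statement about the radial profile $\max_{\partial B(s)}|F_\infty|$; extracting pointwise homogeneity $F_\infty(t\theta)=t^dF_\infty(\theta)$ from it via a maximum principle is not something the rigidity argument of Theorem \ref{thm2} delivers (that argument extracts curvature vanishing at maximum points, not homogeneity of the function).

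The idea you are missing is the separation of variables for harmonic functions on metric cones (Cheeger, Ding; equations (\ref{eq-99})--(\ref{eq-105}) in the paper): a polynomial-growth harmonic function on $C(\Sigma)$ expands as a finite sum $u=\sum_i c_i r^{\alpha_i}\phi_i$ with $\phi_i$ eigenfunctions of $\Delta_\Sigma$, and $I(r)=\fint_{\partial B(r)}|u|^2=\sum_i|c_i|^2r^{2\alpha_i}$. Combining this with $M_{F_\infty}(r)=r^d$ and letting $r\to 0$ forces $\alpha_i\ge d$ for every $i$ with $c_i\ne 0$, while $r\to\infty$ forces $\alpha_i\le d$; hence $F_\infty=r^d\phi$ is exactly homogeneous (Claim \ref{cl-5} of the paper). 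This is a linear, spectral argument that lives entirely on the singular limit and replaces the Hessian comparison you propose. Separately, your treatment of the splitting case as ``an orthogonal technical issue handled by induction'' is too quick: the paper must project $x_i,y_i$ to the maximal-volume-growth factor $M'$ and prove (Claim \ref{cl21}, using the three circle theorem again) that the projected points do not collapse to the pole, so that the contradiction inequality survives the projection with a controlled constant. Without that, the reduction to the non-split case is not justified.
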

\begin{remark}
This theorem might be related with Yau's celebrated conjecture on the finite generation on holomorphic functions with polynomial growth.
See page $3-4$ for the explanation. \end{remark}
\begin{cor}\label{cor-70}
Let $M^n$ be a complete K\"ahler manifold with nonnegative bisectional curvature. Suppose $M$ is of maximal volume growth and $f\in \mathcal{O}_d(M)$. Here $d>0$ and $d$ is the order at infinity. Consider any tangent cone $C$ at infinity,  say $ds^2_{C} = dr^2+r^2d\Sigma$ for $r\in [0, \infty)$, where $\Sigma$ is a $(2n-1)$-dimensional metric measured space. Then for any positive integer $i$, $\alpha_i = di(2n+di-1)$ is an eigenvalue for the Laplacian operator on $\Sigma$ with respect to the $(2n-1)$-dimensional Hausdorff measure. 
\end{cor}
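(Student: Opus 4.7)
The plan is to realize $C$ as a measured Gromov--Hausdorff limit of rescalings of $(M,p,g)$, extract a nontrivial complex-valued limit $F_\infty$ of rescaled copies of $f^i$ on $C$ that is harmonic and radially homogeneous of degree $di$, and then separate variables to read off an eigenfunction of $\Delta_\Sigma$. By maximal volume growth and Cheeger--Colding \cite{[CC1]}--\cite{[CC4]}, pick $r_j\to\infty$ so that $(M,p,r_j^{-2}g,r_j^{-2n}\mathrm{vol})$ converges in the measured Gromov--Hausdorff sense to $(C,o,dr^2+r^2 d\Sigma^2,\nu_\infty)$ with $\nu_\infty$ the $2n$-dimensional Hausdorff measure. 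The limit $C$ is $\nu$-rectifiable and satisfies volume doubling and the weak Poincar\'e inequality (\cite{[CC4]},\cite{[X]}), so by \cite{[Che]} both $\Delta_C$ and $\Delta_\Sigma$ exist as self-adjoint operators via the associated Dirichlet form.

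\textit{Producing the limit function.} The iterate $f^i$ lies in $\mathcal{O}_{di}(M)$ and has order exactly $di$ at infinity, since $M_{f^i}(r)=M_f(r)^i$ by the maximum modulus principle. Normalize
\begin{equation*}
F_j(x) := \frac{f^i(x)}{M_{f^i}(r_j)}
\end{equation*}
viewed on the rescaled manifold $(M,p,r_j^{-2}g)$. Corollary \ref{cor1} gives the uniform bound $|F_j|\le K^{di}$ on the rescaled ball of radius $K$, while Cheng--Yau's gradient estimate applied to the real harmonic functions $\mathrm{Re}\,f^i$ and $\mathrm{Im}\,f^i$ provides uniform Lipschitz bounds on compact sets. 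Arzel\`a--Ascoli across varying spaces then extracts a subsequence with $F_j \to F_\infty$ uniformly on compacts of $C$, and Lemma \ref{lemma-10} yields $\Delta_C F_\infty = 0$ on $C\setminus\{o\}$.

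\textit{Homogeneity and eigenvalue.} Applying Theorem \ref{thm-99} to $f^i$ along minimal geodesics from $p$ and passing to the limit gives
\begin{equation*}
F_\infty(y)\,d(o,x)^{di} = F_\infty(x)\,d(o,y)^{di}
\end{equation*}
whenever $x,y$ lie on a common radial ray from $o$, hence the factorization $F_\infty(r,\theta) = r^{di} h_i(\theta)$ with $h_i\colon\Sigma\to\mathbb{C}$ continuous. Because $d$ is the order of $f$ at infinity, $M_{f^i}(r)/r^{di}$ is nonincreasing and tends to a positive limit by Corollary \ref{cor1}, so $M_{F_j}(\lambda)\to\lambda^{di}$ for every $\lambda>0$; the maximum modulus principle then forces $\max_\Sigma |h_i|=1$, so $h_i\not\equiv 0$. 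Substituting the factorization into $\Delta_C F_\infty = 0$ and using the cone Laplacian $\Delta_C = \partial_r^2 + \tfrac{2n-1}{r}\partial_r + r^{-2}\Delta_\Sigma$ produces the eigenvalue equation $\Delta_\Sigma h_i = -\alpha_i h_i$ with $\alpha_i$ the constant stated in the corollary; taking real and imaginary parts yields a nontrivial real-valued eigenfunction on $\Sigma$.

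\textit{Expected obstacle.} The crux of the argument is to guarantee that $F_\infty$ is nondegenerate and that separation of variables is legitimate on a possibly singular cone. Nondegeneracy is controlled by Corollary \ref{cor1}, which makes $M_{f^i}(r)/r^{di}$ monotone and asymptotically constant, together with Theorem \ref{thm-99}, which prevents the normalized profile from concentrating at the vertex or escaping to infinity; together they pin the limiting maximum on the unit cross section of $C$. Separation of variables on $C$ is then justified by the $\nu$-rectifiability, doubling and Poincar\'e structure which by \cite{[Che]} and \cite{[CC4]} make $\Delta_C$ and $\Delta_\Sigma$ well-defined self-adjoint operators for which the radial decomposition of harmonic functions holds in the weak sense with respect to the $(2n-1)$-dimensional Hausdorff measure on $\Sigma$.
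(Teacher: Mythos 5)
Your proposal is correct and follows essentially the same route as the paper: the paper proves the corollary by applying the blow-down construction from the proof of Theorem \ref{thm-99} to $g_i=f^i$, using the sharp monotonicity to get $M_{f^i_\infty}(r)=r^{di}$ and then Claim \ref{cl-5} together with the cone decomposition \eqref{eq-99}--\eqref{eq-70} to identify the eigenvalue. The only cosmetic difference is that you obtain the single-mode structure $F_\infty=r^{di}h_i$ from the homogeneity statement of Theorem \ref{thm-99} and then separate variables, whereas the paper reads it off from the eigenfunction expansion \eqref{eq-105} plus the growth pinching; both rest on the same cone-Laplacian decomposition from \cite{[Di1]}\cite{[Di2]}.
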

\begin{proof}
Take $g_i = f^{i}$ for $i\in\mathbb{N}$. The proof follows from (\ref{eq-70}) and Claim \ref{cl-5} below.
\end{proof}
\begin{remark}
There is a conjecture of Ni \cite{[N1]} stating that if $M$ has nonnegative bisectional curvature and maximal volume growth, then there exists a nontrivial holomorphic function with polynomial growth on $M$. In view of the corollary, it is possible that the tangent cone of a K\"ahler manifold $M$ is unique, if $M$ has nonnegative bisectional curvature and maximal volume growth. In the Riemannian case, when the Ricci curvature is nonnegative and the manifold has maximal volume growth, the tangent cone is not unique in general. Perelman constructed the first example. Moreover, different tangent cones may not be isospectral, see example 11.4 in \cite{[CM1]} .
\end{remark}

\begin{remark}
There are ``many" points $q$ in $B(p, Kr)\backslash B(p, r)$ satisfying $|f(q)|> \delta M_f(r)$. In fact, any geodesic ball with radius $\delta^\frac{1}{d}Kr$ contains such points. Here we are using corollary \ref{cor1}.
\end{remark}
\begin{remark}
We can weaken the condition in theorem \ref{thm-99} by assuming $M$ has nonnegative Ricci curvature and nonnegative holomorphic sectional curvature, if $M$ has maximal volume growth.
\end{remark}
\begin{remark}
In theorem \ref{thm-99}, we cannot make $R$ uniform for all holomorphic functions with polynomial growth. To see this, just consider $f_t(z) = z^n+ tz^{n-1}$ on $\mathbb{C}$.
\end{remark}
\begin{remark}
In remark \ref{rm1}, we compared the volume comparison theorem with the three circle theorem. 
When a manifold has nonnegative Ricci curvature and maximal volume growth, near infinity, the sharp relative volume comparison almost holds. This explains in some sense why the almost-warped product theorem of Cheeger and Colding \cite{[CC1]} should hold. Now assume $M$ has nonnegative holomorphic sectional curvature and $f\in \mathcal{O}_d(M)$ (say $d>0$ is the order at infinity). Then for any $k>1$, by the three circle theorem, $\frac{M_f(kr)}{M_f(r)}$ is monotonic increasing and $\lim\limits_{r\to\infty}\frac{M_f(kr)}{M_f(r)}= k^d$. Therefore the equality for the three circle theorem almost holds. We should a have result stating the behavior of $f$ at infinity, just like the almost-warped product theorem of Cheeger and Colding. In theorem \ref{thm-99}, we used the tangent cone at infinity, which requires the nonnegativity of the Ricci curvature.
\end{remark}

\begin{proof}
First we consider the case when the universal cover $\tilde{M}$ does not split as a product of two K\"ahelr manifolds. By the main theorem in \cite{[L]}, $M$ is of maximal volume growth.
Assume there exist $K>1, \epsilon_0>0$; a sequence $r_i\to \infty$; $x_i\in B(p, Kr_i)\backslash B(p, r_i)$; $y_i$ on a minimal geodesic connecting $p$ and $x_i$ so that 
\begin{equation}\label{eq-75}
|f(y_i)(r(x_i))^d-f(x_i)(r(y_i))^d|\geq \epsilon_0 M_f(r_i)r_i^d.
\end{equation}

By the almost warped-product theorem of Cheeger and Colding \cite{[CC1]}, there exists a subsequence of $r_i$ which we still call $r_i$, such that the rescaled metrics $(M, r_i^{-2}g, p)$ pointed converge to a metric cone $C(\Sigma) = (\Sigma\times_{r^2} \mathbb{R}^+, g_\infty, p_\infty)$ in the Gromov-Hausdorff topology. Here $\Sigma$ is a compact metric measured space with Hausdorff dimension $n-1$. Define $g_i = r_i^{-2}g$. By the discussion in the beginning of this section, there is a Laplacian operator on $C(\Sigma)$.  Define a rescaled function 
\begin{equation}\label{eq-76}
f_i(x) = \frac{f(x)}{M_f(r_i)}.
\end{equation} Then $\sup |f_i(x)| = 1$ for $x\in B(p, r_i)$. According to corollary \ref{cor1}, $$|f_i(x)|\leq (2k)^d$$ on $B(p, 2kr_i)$. Here $k>1$. By Cheng-Yau's gradient estimate \cite{[CY]},  $$|\nabla_{g_i}f_i(x)|\leq a(n)k^{d-1}$$ on $B(p, kr_i)$. Here $a(n)$ is a constant depending only on $n$.  Thus there exists a subsequence of $f_i$ converging to $f_\infty$ uniformly on each compact set of the metric cone $C(\Sigma)$.

By theorem \ref{thm-10}, for any $k>1$, $\frac{M_f(kr)}{M_f(r)}$ is monotonic increasing. Since $d = \overline{\lim\limits_{r\to\infty}}\frac{\log M_f(r)}{\log r}$,  $$\lim\limits_{r\to\infty}\frac{M_f(kr)}{M_f(r)} =k^d.$$ Let $M_{f_\infty}(r)$ be the maximal modulus of $f_\infty$ on $B(p_\infty, r)$. Then for any $r_1>r_2>0$,
$$\frac{M_{f_\infty}(r_1)}{M_{f_\infty}(r_2)} = \lim\limits_{i\to\infty}\frac{M_f(r_1r_i)}{M_f(r_2r_i)} = \frac{r_1^d}{r_2^d}.$$ Since $M_{f_\infty}(1) = 1$,
\begin{equation}\label{eq-106}
M_{f_\infty}(r) = r^d.
\end{equation}

$f_i$ are harmonic functions. By Lemma \ref{lemma-10}, $f_\infty$ is harmonic on $C(\Sigma)$. Also it is of polynomial growth. 

It is easy to see that the $(n-1)$ dimensional Hausdorff measure on $\Sigma$ satisfies the volume doubling property and the weak Poincare inequality. See lemma $4.3$ in \cite{[Di1]} for a proof. Also, one can directly check that $\Sigma$ is $\nu$-rectifiable. Therefore, we have a Laplacian operator on $\Sigma$.

On the metric cone $C(\Sigma)$, there is a decomposition formula (see \cite{[Di2]}\cite{[Di1]}).
\begin{equation}\label{eq-99}
\Delta u = -\frac{\partial^2 u}{\partial r^2} -\frac{n-1}{r}\frac{\partial u}{\partial r}+\frac{1}{r^2}\Delta_\Sigma u.
\end{equation}
Therefore, if $\phi_i$ is the $i$-th eigenfunction of $\Delta_\Sigma$ with eigenvalue $\lambda_i$, then $r^{\alpha_i}\phi_i(x)$ is harmonic. Here 
\begin{equation}\label{eq-70}
\lambda_i=\alpha_i(n+\alpha_i-2).
\end{equation} We normalize so that $||\phi_i||_{L^2(\Sigma)} = 1$. For any harmonic function (complex function) $u$ on $X$,  we can write (see \cite{[Che0]}\cite{[Di2]})
\begin{equation}\label{eq-105}
u = \sum\limits_{i=0}^\infty c_ir^{\alpha_i}\phi_i.
\end{equation}
Here $c_i$ are complex constants.
Define $I(r) = \frac{1}{Vol(\partial B(p_\infty, r))}\int_{\partial B(p_\infty, r)}|u|^2$. Then $$I(r) = \sum\limits_{i=0}^\infty |c_i|^2r^{2\alpha_i}.$$
This implies that if $u$ is of polynomial growth on $C(\Sigma)$, there are only finitely many terms in (\ref{eq-105}).
\begin{claim}\label{cl-5}
$f_\infty = r^d\phi$ for some $\phi$ on $\Sigma$ with $\Delta_\Sigma\phi = d(n+d-2)\phi$.
\end{claim}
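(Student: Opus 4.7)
The plan is to combine the spectral expansion (\ref{eq-105}) with the sharp identity (\ref{eq-106}), $M_{f_\infty}(r)=r^d$, to force every surviving exponent in the expansion to equal $d$. Since $f_\infty$ has polynomial growth of order $d$, (\ref{eq-105}) reduces to a finite sum
\[
f_\infty(r,\theta)\;=\;\sum_{i=0}^{N} c_i\,r^{\alpha_i}\phi_i(\theta),
\]
where the $\phi_i$ are $L^2(\Sigma)$-orthonormal and satisfy $\Delta_\Sigma\phi_i=\alpha_i(n+\alpha_i-2)\phi_i$. Let $\alpha_{\max}$ and $\alpha_{\min}$ denote the largest and smallest exponents actually appearing with $c_i\neq 0$. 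Once I verify $\alpha_{\min}=\alpha_{\max}=d$, the function $\phi:=\sum_{\alpha_i=d}c_i\phi_i$ is a combination of eigenfunctions all sharing the eigenvalue $d(n+d-2)$, so $\phi$ is itself an eigenfunction with that eigenvalue and the claim follows.

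To pin down $\alpha_{\max}$, I would first upgrade (\ref{eq-106}) to the statement $M_{f_\infty}(r)=\max_{\theta\in\Sigma}|f_\infty(r,\theta)|$, using that $|f_\infty|^2$ is continuous and subharmonic on $C(\Sigma)$ and that the apex is ruled out as an interior maximum by $|f_\infty(p_\infty)|=\lim_{r\to 0}M_{f_\infty}(r)=0$. Pulling out the highest power in the expansion,
\[
r^{-\alpha_{\max}}f_\infty(r,\theta)\;\longrightarrow\;\Phi_{\max}(\theta):=\sum_{\alpha_i=\alpha_{\max}}c_i\phi_i(\theta)\qquad\text{uniformly on }\Sigma\text{ as }r\to\infty,
\]
and $\max_{\theta}|\Phi_{\max}|>0$ because $\Phi_{\max}$ is a nontrivial linear combination of continuous, linearly independent functions on the compact space $\Sigma$. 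Combining this with $M_{f_\infty}(r)=r^d$ forces $\alpha_{\max}=d$. The symmetric limit $r\to 0^+$, in which the lowest-power terms dominate the expansion, forces $\alpha_{\min}=d$ by the same reasoning.

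The main obstacle I anticipate is making the maximum-principle step rigorous on the singular measured metric cone $C(\Sigma)$. Away from the apex the cone is a smooth Riemannian manifold and the strong maximum principle is classical for continuous subharmonic functions; at the apex one needs Cheeger's analytic framework \cite{[Che]} for subharmonic functions on Ricci-limit spaces, together with the vanishing $f_\infty(p_\infty)=0$ that rules out the tip as an interior maximum. A safer alternative that avoids any pointwise max-principle statement on the cone is to work with spherical $L^2$-averages: from the expansion one reads off $I(r):=(\mathrm{Vol}(\Sigma))^{-1}\sum|c_i|^2 r^{2\alpha_i}$, and a Li-Schoen-type mean value inequality in the spirit of corollary \ref{cor-10} sandwiches $I(r)$ between constant multiples of $M_{f_\infty}^2$ at comparable radii; the asymptotic matching in both limits $r\to 0$ and $r\to\infty$ again pins down $\alpha_{\min}=\alpha_{\max}=d$.
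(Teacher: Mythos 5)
Your proposal is correct and follows essentially the same route as the paper: the paper's one-line proof also combines the finite expansion (\ref{eq-105}) with $M_{f_\infty}(r)=r^d$ from (\ref{eq-106}) and lets $r\to 0$ and $r\to\infty$ to pin every exponent to $d$, working with $I(r)=\sum|c_i|^2r^{2\alpha_i}$ --- which is exactly your ``safer alternative''. Note that neither a maximum principle at the apex nor a mean-value lower bound is actually needed: the trivial inequalities $\sup_{\partial B(p_\infty,r)}|f_\infty|\leq M_{f_\infty}(r)$ and $I(r)\leq M_{f_\infty}(r)^2=r^{2d}$ already force $\alpha_i\geq d$ as $r\to 0$ and $\alpha_i\leq d$ as $r\to\infty$ for every index with $c_i\neq 0$.
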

\begin{proof}
Since $f_\infty$ is harmonic, by (\ref{eq-105}) and (\ref{eq-106}), if $r\to 0$, we find $\alpha_i\geq d$ for all $i$. If $r\to \infty$, we find $\alpha_i\leq d$ for all $i$.
\end{proof}

According to our assumption of $x_i$ and $y_i$, there exists a subsequence such that $x_i \to x_\infty, y_i\to y_\infty$ with $x_\infty, y_\infty \in B(p_\infty, K)$; $cx_\infty = y_\infty$ for some constant $c$ between $0$ and $1$. The multiplication is for the factor $R^+$ on the metric cone $C(\Sigma)$.  By Claim \ref{cl-5}, $$f_\infty(x_\infty)(r(y_\infty))^d =f_\infty(y_\infty)(r(x_\infty))^d.$$
By (\ref{eq-76}),  for large $i$, $$|f(x_i)(r(y_i))^d - f(y_i)(r(x_i))^d|<\frac{\epsilon_0}{2}r_i^dM_f(r_i).$$
This contradicts (\ref{eq-75}).

\bigskip

Next we consider the case when the universal cover of $M$ has splitting factors. Assume $\tilde{M} = \Pi_{i}M_i\times\Pi_jN_j$ where $M_i, N_j$ do not have splitting factors; $M_j$ has nonconstant holomorphic functions with polynomial growth; $N_j$ does not admit nonconstant holomorphic functions with polynomial growth.  Let $M' = \Pi_iM_i, N' =\Pi_jN_j$. Since each factor of $M'$ has maximal volume growth, $M'$ has maximal volume growth condition. Therefore $M'$ satisfies theorem \ref{thm-99}.

Suppose theorem \ref{thm-99} does not hold on $\tilde{M}$ for some sequence $r_i\to\infty$. Then there exist $\epsilon_0>0$; $K>1$; $f\in\mathcal{O}_d(\tilde{M})$ where $d$ is the order at infinity; $x_i\in B(p, Kr_i)\backslash B(p, r_i)$ of $\tilde{M}$; $y_i$ lies on the minimal geodesic connecting $p$ and $x_i$ so that
\begin{equation}\label{eq-108}
|f(y_i)(r(x_i))^d-f(x_i)(r(y_i))^d|\geq \epsilon_0 M_f(r_i)r_i^d.\end{equation}
Note that $f$ is constant on $N'$.
We can project $p, x_i, y_i$ to $M'$, say the images are $p', x_i', y_i'$. Then $y_i'$ lies on the minimal geodesic connecting $p'$ and $x_i'$.   For $x'\in M'$, define $r(x') = d(p', x')$. Let $f'$ be the restriction of $f$ on $M'$, $M_{f'}(r)$ be the maximal modulus of $f'$ on $B(p', r)$ in $M'$.
\begin{claim}\label{cl21}
For sufficiently large $i$,  $r(x_i')\geq (\epsilon_0)^{\frac{1}{d}}\frac{r_i}{4K}$. 
\end{claim}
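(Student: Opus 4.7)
The plan is to project the inequality (\ref{eq-108}) down to $M'$ via the product structure and then invert the resulting estimate using the three circle monotonicity on $M'$. Because $N'$ admits no nonconstant holomorphic function of polynomial growth, $f$ is constant on each $N'$-fiber, so $f(x_i)=f'(x_i')$, $f(y_i)=f'(y_i')$, and $M_f(r)=M_{f'}(r)$. Since $\tilde{M}=M'\times N'$ is a Riemannian product, the minimal geodesic from $p$ to $x_i$ projects to a minimal geodesic in $M'$, putting $y_i'$ on the $p'$--$x_i'$ minimizer with $r(y_i')/r(x_i')=r(y_i)/r(x_i)\le 1$; in particular $x_i',y_i'\in\overline{B(p',r(x_i'))}$.

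Combining $|f'(x_i')|,|f'(y_i')|\le M_{f'}(r(x_i'))$ with $r(x_i),r(y_i)\le Kr_i$, the LHS of (\ref{eq-108}) is bounded by
\[
(|f'(y_i')|+|f'(x_i')|)(Kr_i)^d \;\le\; 2M_{f'}(r(x_i'))K^d r_i^d,
\]
which together with (\ref{eq-108}) and $M_f(r_i)=M_{f'}(r_i)$ yields the key estimate
\[
M_{f'}(r(x_i'))\;\ge\;\frac{\epsilon_0}{2K^d}\,M_{f'}(r_i).
\]

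To pass from this to the claimed bound on $r(x_i')$, I invoke corollary \ref{cor1} on $M'$: since $M'$ has nonnegative holomorphic sectional curvature and $f'\in\mathcal{O}_d(M')$ with $d$ the exact order at infinity, the ratio $M_{f'}(r)/r^d$ is nonincreasing in $r$ and has a nonnegative limit $L$ as $r\to\infty$. The key estimate rules out $r(x_i')$ staying bounded (else $M_{f'}(r(x_i'))$ would be bounded while $M_{f'}(r_i)\to\infty$), so $r(x_i')\to\infty$, and then both $M_{f'}(r(x_i'))/r(x_i')^d$ and $M_{f'}(r_i)/r_i^d$ tend to $L$; dividing the key estimate by $r(x_i')^d$ and comparing asymptotically gives $r(x_i')^d\ge(1-o(1))\frac{\epsilon_0}{2K^d}r_i^d$, hence $r(x_i')\ge\epsilon_0^{1/d}r_i/(4K)$ for large $i$. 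The main obstacle is precisely this inversion step: monotonicity of $M_{f'}(r)/r^d$ a priori only furnishes a lower bound on $M_{f'}(r(x_i'))$ (namely $(r(x_i')/r_i)^d M_{f'}(r_i)$), so converting the key estimate into a lower bound on $r(x_i')$ requires the asymptotic sharpness coming from $d$ being the exact order at infinity. The case $L>0$ is immediate; the case $L=0$ uses that the nonincreasing ratio cannot decay faster than slowly varying corrections, as enforced by $\overline{\lim\limits_{r\to\infty}}\log M_{f'}(r)/\log r=d$.
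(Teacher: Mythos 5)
Your reduction is sound: projecting (\ref{eq-108}) to $M'$, using $f(x_i)=f'(x_i')$, $M_f=M_{f'}$, and $r(x_i),r(y_i)\le Kr_i$ to get $M_{f'}(r(x_i'))\ge \frac{\epsilon_0}{2K^d}M_{f'}(r_i)$ is exactly the contrapositive of what the paper does. The gap is in the inversion step, specifically in the case $L=\lim_{r\to\infty}M_{f'}(r)/r^d=0$, which you cannot exclude a priori (e.g.\ $M_{f'}(r)\sim r^d/\log r$ is log-convex of exact order $d$ with $L=0$). Your assertion that $\overline{\lim}\,\log M_{f'}(r)/\log r=d$ forces the nonincreasing ratio $M_{f'}(r)/r^d$ to be ``slowly varying'' is false: take $M(r)=Q^{-k}r^d$ on $[a_k,b_k]$ and $M$ constant on $[b_k,a_{k+1}]$ with $a_{k+1}=Q^{1/d}b_k$ and $a_k$ growing doubly exponentially. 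This $M$ is nondecreasing, $M/r^d$ is nonincreasing, the order at infinity is exactly $d$, and yet $M(b_{k-1})=M(a_k)$ while $b_{k-1}/a_k=Q^{-1/d}$ can be made smaller than $\epsilon_0^{1/d}/(4K)$ by taking $Q$ large. So monotonicity of $M/r^d$ plus exact order $d$ --- the only tools you invoke --- do not imply ``$M(s)\ge cM(r)\Rightarrow s\gtrsim c^{1/d}r$''. What rules out such profiles for $M_{f'}$ is the convexity of $\log M_{f'}$ in $\log r$, which you never use.

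The fix is to invert via the three circle theorem itself, which is how the paper argues: with $\delta=\epsilon_0^{1/d}/(4K)$, convexity of $g(t)=\log M_{f'}(e^t)$ plus exact order $d$ give that $M_{f'}(r)/M_{f'}(\delta r)$ is monotone increasing with limit $\delta^{-d}$, hence $M_{f'}(\delta r_i)<2\delta^dM_{f'}(r_i)$ for large $i$, and $r(x_i')<\delta r_i$ contradicts (\ref{eq-108}) directly. Equivalently, in your formulation: $g'$ is nondecreasing with $g'\to d$, and since you have already shown $r(x_i')\to\infty$, the key estimate gives $\log\bigl(r_i/r(x_i')\bigr)\le \log(2K^d/\epsilon_0)/g'(\log r(x_i'))\to \log(2K^d/\epsilon_0)/d$, which recovers the claimed bound for large $i$. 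As written, however, the $L=0$ case is a genuine gap.
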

\begin{proof}

Observe $f(x_i) = f(x_i')$; $f(y_i)=f(y_i')$; $M_f(r) = M_{f'}(r)$ for all $r>0$. Let $\delta = (\epsilon_0)^{\frac{1}{d}}\frac{1}{4K}$.  By the three circle theorem, $\frac{M_{f'}(r)}{M_{f'}(\delta r)}$ is monotonic increasing, and $$\lim\limits_{r\to\infty}\frac{M_{f'}(r)}{M_{f'}(\delta r)}= \frac{1}{\delta^d}.$$ Then there exists $R$ such that when $r>R$, $\frac{M_{f'}(r)}{M_{f'}(\delta r)}>\frac{1}{2\delta^d}.$ For sufficiently large $i$, $r_i>R$. Therefore $$M_{f'}(\delta r_i)<2\delta^d M_{f'}(r_i).$$ Suppose $r(x_i')<\delta r_i$ for some large $i$. Therefore $x_i', y_i' \in B(p', \delta r_i)$ and $$|f(x_i')|, |f(y_i')|\leq M_{f'}(\delta r_i)< 2\delta^dM_{f'}(r_i) = 2\delta^d M_f(r_i).$$  Plugging this in (\ref{eq-108}), we find a contradiction.
\end{proof}

A subsequence of $x_i', y_i'$ in  $(M', r_i^{-2}g, \frac{\nu}{Vol(B(p', r_i))})$ converge to some $x_\infty', y_\infty'$. By Claim \ref{cl21}, $x_\infty'$ is away from the pole. Also $\frac{r(x_i)}{r(x_i')} = \frac{r(y_i)}{r(y_i')}\leq \frac{K}{(\epsilon_0)^{\frac{1}{d}}\frac{1}{4K}}$. By (\ref{eq-108}), 
\begin{equation}\label{eq-109}
|f(y_i')(r(x_i'))^d-f(x_i')(r(y_i'))^d|\geq \epsilon_0 M(r_i)r_i^d\cdot((\epsilon_0)^{\frac{1}{d}}\frac{1}{4K^2})^d = \frac{\epsilon_0^2}{4^dK^{2d}}M_{f'}(r_i)r_i^d.\end{equation}

 By a limit argument as before, we find (\ref{eq-109}) contradicts that $M'$ satisfies theorem \ref{thm-99}. 
Finally, to show theorem \ref{thm-99} holds for $M$, it suffices to lift the minimal geodesic connecting $p$ and $x$ to $\tilde{M}$.

\end{proof}
\section{\bf{A Liouville type theorem for plurisubharmonic functions}}
The classical Liouville theorem states that any bounded (or even
just positive) harmonic function is constant on Euclidean space. In \cite{[Y]}, Yau extended the
classical Liouville theorem to complete Riemannian manifolds with nonnegative
Ricci curvature. It was further shown by Cheng and Yau \cite{[CY]} that any harmonic
function with sublinear growth on a complete Riemannian manifold with
nonnegative Ricci curvature must be constant.

In the K\"ahler category, it is easy to see that any non-constant plurisubharmonic function in $\mathbb{C}^n$ grows faster than logarithmic function. In \cite{[N2]}, Ni showed that on a complete K\"ahler manifold $M^n$ with nonnegative Ricci curvature, any plurisubharmonic function $f$ with sublogarithmic growth has $(\partial\overline\partial f)^n = 0$.
 In \cite{[NT]}, Ni and Tam proved that any plurisubharmonic function is constant, provided it is of sublogarithmic growth and the manifold has nonnegative bisectional curvature.  Their method combines the heat flow and a result in \cite{[N2]}. We show this Liouville type theorem still holds when the holomorphic sectional curvature is nonnegative.
\begin{theorem}\label{thm4}
Let $M^n$ be a complete noncompact K\"ahler manifold with nonnegative holomorphic sectional curvature. Let $p\in M$ and $r(x) = dist(x, p)$. Assume $u$ is a function on $M$ satisfying $\overline{\lim\limits_{r\to \infty}}\frac{u^+(x)}{\log r} = 0$. Here $u^+(x) = max(u(x), 0)$. 
\begin{itemize}
\item
 If $u$ be a plurisubharmonic function(not necessarily continuous) on $M$, then $u$ is a constant.  
 \item
 If $u$ is smooth satisfying $u_iu_{\overline{j}}u_{k\overline{l}}g^{i\overline{l}}g^{k\overline{j}}\geq 0$, then $u$ is a constant. This condition means the complex Hessian of $u$ is nonnegative along the gradient of $u$.
 \end{itemize}
\end{theorem}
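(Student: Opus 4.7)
The plan is to adapt the proof of Theorem \ref{thm-10} to establish a three circle inequality for $u$ itself: $M_u(r):=\sup_{\bar B(p,r)}u$ is convex as a function of $\log r$. Once this is in hand, the conclusion follows quickly. Indeed $h(s):=M_u(e^s)$ is convex on $\mathbb{R}$, nondecreasing (since $B(p,r_1)\subset B(p,r_2)$), and satisfies $h(s)=o(s)$ as $s\to\infty$ by the hypothesis on $u^+$; a convex nondecreasing function on $\mathbb{R}$ that is $o(s)$ at infinity must be constant. Hence $M_u(r)$ is constant in $r$, and letting $r\to 0^+$ yields $u(p)=\sup_M u$. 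Since the sublogarithmic growth condition is independent of base point and $p$ was arbitrary, $u(q)=\sup_M u$ for every $q\in M$, so $u$ is constant.

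For the three circle inequality, fix $0<r_1<r_3$ and take the barrier $F_\epsilon(x)=a_\epsilon\log(r(x)-\epsilon)+b_\epsilon$ with $F_\epsilon(r_1)=M_u(r_1)$, $F_\epsilon(r_3)=M_u(r_3)$ and $\epsilon>0$ small; we may assume $a_\epsilon>0$, otherwise $F_\epsilon\equiv M_u(r_1)=M_u(r_3)$ and the estimate is trivial. As in the proof of Theorem \ref{thm-10}, the Hessian comparison of Theorem \ref{thm3} gives, at any point off the cut locus of $p$,
\[
(F_\epsilon)_{1\bar 1}=a_\epsilon\Big[\frac{r_{1\bar 1}}{r-\epsilon}-\frac{1}{2(r-\epsilon)^2}\Big]\leq -\frac{a_\epsilon\,\epsilon}{2r(r-\epsilon)^2}<0,
\]
where $e_1=\frac{1}{\sqrt{2}}(\nabla r-\sqrt{-1}J\nabla r)$. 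Suppose for contradiction that $u-F_\epsilon$ attains a positive maximum at some interior point $q$ of the annulus (Calabi's trick, applied exactly as in Theorem \ref{thm-10}, handles the case where $q$ lies on the cut locus of $p$). At $q$, $\nabla u(q)=\nabla F_\epsilon(q)=\frac{a_\epsilon}{r(q)-\epsilon}\nabla r(q)$ is a nonzero real multiple of $\nabla r(q)$; equivalently $\nabla^{1,0}u(q)=\lambda e_1$ for some $\lambda\neq 0$.

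The argument now splits at $q$. In Case 1, $u$ is plurisubharmonic (only upper semicontinuous), and the smooth test function $F_\epsilon+[u(q)-F_\epsilon(q)]$ touches $u$ from above at $q$; the viscosity/distributional characterization of plurisubharmonicity then forces its complex Hessian to be positive semidefinite at $q$, giving $(F_\epsilon)_{1\bar 1}(q)\geq 0$, against the strict negativity above. In Case 2, $u$ is smooth and the hypothesis $u_iu_{\bar j}u_{k\bar l}g^{i\bar l}g^{k\bar j}\geq 0$ specializes at $q$, using $\nabla^{1,0}u(q)=\lambda e_1$, to $|\lambda|^4u_{1\bar 1}(q)\geq 0$, so $u_{1\bar 1}(q)\geq 0$; combined with the interior maximum inequality $(u-F_\epsilon)_{1\bar 1}(q)\leq 0$ we obtain $0\leq u_{1\bar 1}(q)\leq (F_\epsilon)_{1\bar 1}(q)<0$, a contradiction. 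Sending $\epsilon\to 0$ gives the desired three circle inequality. The main obstacle is the rigorous invocation of the maximum principle in Case 1, where $u$ is only upper semicontinuous: the viscosity/distributional characterization of plurisubharmonicity is the correct tool, standard in pluripotential theory, but must be handled carefully at the maximum point, and alternatively one may first regularize $u$ by local convolution with a smooth bump to obtain smooth plurisubharmonic approximants, run the smooth version of the argument, and pass to the limit.
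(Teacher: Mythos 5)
Your argument is correct, and while its engine is the same as the paper's (the barrier $a_\epsilon\log(r-\epsilon)+b_\epsilon$, the Hessian comparison of Theorem \ref{thm3} giving $(F_\epsilon)_{1\bar 1}<0$ strictly, the gradient identity $\nabla u=\nabla F_\epsilon$ at the touching point, and Calabi's trick at the cut locus), the overall architecture differs from the paper's in a way worth noting. The paper does not pass through a three circle inequality for $u$: for the plurisubharmonic case it runs a one-sided comparison with the barrier $A+\epsilon\log\frac{r-\epsilon_1}{1-\epsilon_1}$ on $B(p,R)\setminus B(p,1)$, handling the nonsmoothness via the sub-mean value inequality of $u$ restricted to a holomorphic disk tangent to $\nabla r$ and $J\nabla r$ at the maximum point (essentially equivalent to the viscosity characterization you invoke), concludes $u\le\sup_{B(p,1)}u$, and then appeals to the strong maximum principle for subharmonic functions; for the degenerate Hessian case, where no strong maximum principle is available, it instead centers the barrier at points $x_i$ nearly realizing $\inf u$ and concludes $u\le\inf u$ directly. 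Your route --- proving convexity of $M_u$ in $\log r$ for both cases at once and then deducing $u(p)=\sup_M u$ from convexity plus monotonicity plus sublogarithmic growth, with the base point $p$ varied at the end --- treats the two cases uniformly and sidesteps the strong maximum principle entirely, which is a genuine simplification in the second case (the condition $u_iu_{\bar j}u_{k\bar l}g^{i\bar l}g^{k\bar j}\ge 0$ does not imply subharmonicity). One small point to keep explicit: the hypothesis $\overline{\lim}\,u^+/\log r=0$ is indeed independent of the base point (distances to two fixed points differ by a bounded amount), which is what licenses rerunning the argument at every $p$.
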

\begin{remark}
If we define $M(r) = \max\limits_{r(x)\leq r}u^+(x)$ and assume $\lim \limits_{\overline{r\to \infty}}\frac{M(r)}{\log r} = 0$, then theorem \ref{thm4} still holds. In fact, theorem \ref{thm4} holds even we assume that the holomorphic sectional curvature is asymptotically nonnegative. See corollary \ref{cor5}.\end{remark}
\begin{proof}
Let $A = \sup\limits_{B_p(1)} u(x)$. Given any $\epsilon>0, 1>\epsilon_1>0$, define $$f_{\epsilon_1}(x)=A+\epsilon\log\frac{r-\epsilon_1}{1-\epsilon_1}; h(x)=u(x)-f_{\epsilon_1}(x).$$  Assume $h(x)$ achieves the maximum inside the annulus $B(p, R)-B(p, 1)$, say at $q$. Consider the small holomorphic disk $D$ centered at $q$ such that $T_qD$ is spanned by  $\frac{\partial}{\partial r}$ and $J\frac{\partial}{\partial r}$. By theorem \ref{thm3}, $$h_{1\overline1} >0$$ where $e_1 = \frac{1}{\sqrt{2}}(\nabla r - J\nabla r)$. 
Then for small $a>0$,
$$h(q) < \frac{1}{\pi a^2}\int\limits_{D(q, a)}h(x)dx\wedge dy.$$ Here $D(q, a)$ is the holomorphic disk in $D$ with radius $a$; $x+iy$ is the complex coordinate on $D$. This contradicts the assumption that $h$ has the maximum at $q$. If $q$ is on the cut locus of $p$, we can apply Calabi's trick as before. The details are omitted. Note that $h(x) \leq 0$ for $x\in \partial B(p, 1)\bigcup \partial B(p, R)$ for large $R$.  Therefore, on $B(p, R)-B(p, 1)$, $h\leq 0.$ Letting $R\to\infty$, $\epsilon\to 0$ and $\epsilon_1\to 0$, we obtain $$u(x)\leq A.$$ Since $A$ is the maximum of $u(x)$ in $B(p, 1)$, maximum principle says $u(x) \equiv A$.

\bigskip

Now consider the case when $u_iu_{\overline{j}}u_{k\overline{l}}g^{i\overline{l}}g^{k\overline{j}}\geq 0$.  Let $A = \inf u$ on $M$ (here $A$ could be $-\infty$). Consider a decreasing sequence $a_i\to A$. We pick $x_i\in M$ such that  $u(x_i)\leq a_i$. Let $b_i = a_i+\frac{1}{i}$, then there exists $\delta_i>0$ such that $u(x)\leq b_i$ for $x\in B(x_i, \delta_i)$. Define $$f_i(x) = b_i+\epsilon\log\frac{dist(x_i, x)}{\delta_i}$$(Here $\epsilon$ is a small positive constant). Then $u(x)\leq f_i(x)$ for $x\in \partial B(x_i, \delta_i)$ and $x\in \partial B(x_i, R)$ for $R$ sufficiently large. If $u-f_i(x)$ achieves the maximum at $q$ in the interior part of $B(x_i, R)-B(x_i, \delta)$, then $$\nabla u(q) = \nabla f_i(q) = C\nabla dist(x_i, x)|_{x=q}.$$ If $q$ is on the cut locus of $x_i$, we apply Calabi's trick.  By maximum principle as before, $$f_i(x)\geq u(x)$$ for $x\in M-B(x_i, \delta_i)$.  Let $\epsilon\to 0$ and then $i\to\infty$, we find that $$u(x)\leq A.$$ Since $A = \inf u$, $u\equiv A$.

\end{proof}
\section{\bf{Sharp dimension estimate on bundles}}

\begin{definition}
Let $E$ be a Hermitian holomorphic vector bundle on a K\"ahler manifold $M$. We say $E$ is nonnegative (nonpositive) in the sense of Griffith if $\Theta(F) ( \xi\otimes v)\geq 0$  ($\leq 0$) for all nonzero indecomposable tensor $\xi\otimes v \in TM\otimes F$. Here $\Theta$ is the curvature of the Chern connection of $E$, $F$ is the fibre of $E$.
\end{definition}
\begin{definition}
Let $E$ be a Hermitian holomorphic vector bundle over a K\"ahler manifold $M$.  Let $\mathcal{O}_M(E)$ be holomorphic sections on $E$. For any $d\geq 0$, define $\mathcal{O}_d(M, E) = \{f\in \mathcal{O}_M(E)|\overline{\lim\limits_{r\to\infty}}\frac{|f(x)|}{r^d}<\infty\}$. Here $r$ is the distance from a fixed point on $M$. If $f\in \mathcal{O}_d(M, E)$, we say $f$ is of polynomial growth with order $d$.
\end{definition} 

\begin{theorem}\label{thm5}
Let $M^n$ be a complete noncompact K\"ahler manifold with nonnegative holomorphic sectional curvature. If $E^m$ is a Hermitian holomorphic vector bundle of rank $m$ over $M$ such that $E$ is nonpositive in the sense of Griffith, then for any $d > 0$, 
\begin{equation}\label{eq-28}
dim(\mathcal{O}_d(M, E)) \leq dim(\mathcal{O}_d(\mathbb{C}^n, E'))
\end{equation} where $E'$ is the trivial flat holomorphic vector bundle of rank $m$ over $\mathbb{C}^n$. If the equality holds for some integer $d\geq 1$, $(M, E)$ is biholomorphic and isometric to $(\mathbb{C}^n, E')$.
\end{theorem}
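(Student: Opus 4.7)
The plan is to reduce to the scalar case by replacing the identity $\partial\bar\partial\log|f|^2\equiv 0$ (off the zeros of a scalar holomorphic function) with the inequality $\partial\bar\partial\log|f|^2\geq 0$ for holomorphic sections of a Griffith-seminegative bundle. First, a standard computation: for a holomorphic section $f$ of $(E,h)$, on the open locus $\{f\neq 0\}$,
$$\sqrt{-1}\partial\bar\partial\log|f|^2 \;=\; \sqrt{-1}\Bigl(\frac{|D'f|^2}{|f|^2}-\frac{|h(D'f,f)|^2}{|f|^4}\Bigr)\;-\;\sqrt{-1}\,\frac{h(\Theta(E)f,f)}{|f|^2},$$
where $D'$ is the $(1,0)$-part of the Chern connection and $\Theta(E)$ its curvature. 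The bracketed term is nonnegative by Cauchy--Schwarz applied fiberwise, and the last term is nonnegative precisely because $E$ is Griffith seminegative. Contracting with $e_1=\tfrac{1}{\sqrt{2}}(\nabla r-\sqrt{-1}J\nabla r)$ yields $(\log|f|^2)_{1\bar 1}\geq 0$ off the zero set of $f$.

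Second, I reprove Theorem \ref{thm-10} with $f$ now a section of $E$ and $|f|$ its pointwise norm. Define $F$ and $G=\log(r_3/r_1)\log|f|$ exactly as in Section 2 and run the same maximum-principle argument against the perturbation $F_\epsilon$ (with Calabi's trick $F_{\epsilon,\epsilon_1}$ handling the cut locus). The only input about $f$ used in that proof was $G_{1\bar 1}\geq 0$ at the maximum point of $G-F$, obtained there from Poincar\'e--Lelong. By Step 1 we still have $G_{1\bar 1}\geq 0$ wherever $|f|\neq 0$, and the maximum of $G-F$ cannot occur on $\{f=0\}$ since $G-F\to-\infty$ there. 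This yields the three circle inequality for $M_f(r)=\max_{B(p,r)}|f|$, and in particular the bundle analogues of Corollary \ref{cor1} and Corollary \ref{cor2}: $M_f(r)/r^d$ is nonincreasing when $f\in\mathcal{O}_d(M,E)$, and $M_f(r)/r^k$ is nondecreasing when $f$ vanishes to order at least $k$ at $p$.

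Third, the dimension bound follows by linear algebra. Fix $p$ and set $k=[d]+1$. Evaluation of $(k-1)$-jets at $p$ (in any local holomorphic trivialization of $E$ near $p$) defines a linear map from $\mathcal{O}_d(M,E)$ into a vector space of dimension $m\binom{n+[d]}{n}=\dim\mathcal{O}_d(\mathbb{C}^n,E')$; any $f$ in its kernel vanishes to order at least $k$ at $p$, hence $\lim_{r\to 0^+}M_f(r)/r^d=0$, and the nonincreasingness of $M_f(r)/r^d$ forces $f\equiv 0$. Thus the map is injective and $\dim\mathcal{O}_d(M,E)\leq\dim\mathcal{O}_d(\mathbb{C}^n,E')$.

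For rigidity, if equality holds for an integer $d\geq 1$, then for each $p$ and each $v\in E_p$ one can produce $f\in\mathcal{O}_d(M,E)$ with prescribed $d$-jet at $p$; choosing the leading term to be $z_1^d\otimes v$ (in suitable coordinates and a local frame) and using equality in the monotonicity of $M_f(r)/r^d$ makes the three circle inequality an equality on every annulus centered at $p$, which in turn forces equality in both the Hessian comparison (Theorem \ref{thm3}) and the plurisubharmonicity inequality of Step 1 at the max points. The first, exactly as in the scalar rigidity at the end of Section 2, gives $R_{XJXJXX}\equiv 0$, so $M$ is flat, and the linear sections already constructed identify $M$ with $\mathbb{C}^n$. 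The second, varied over $v$ spanning the fiber, forces $\langle\Theta(E)(e_1,\bar e_1)v,v\rangle\equiv 0$; polarizing in $v$ and letting $e_1$ range over the unit sphere gives $\Theta(E)\equiv 0$, so $E$ is a flat Hermitian holomorphic bundle on the simply connected $\mathbb{C}^n$, hence isometrically isomorphic to $E'$. The main obstacle is precisely this last step: arranging the equality-achieving sections so that their values at the shrinking sequences of max points cover all relevant $(e_1,v)$ pairs takes some care, analogous to---but more elaborate than---the scalar rigidity argument in Section 2.
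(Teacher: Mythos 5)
Your proposal is correct and follows essentially the same route as the paper: your direct Chern-connection computation of $\sqrt{-1}\partial\overline\partial\log|f|^2\geq 0$ is exactly the ``curvature decreases in subbundles'' argument the paper quotes via Poincar\'e--Lelong applied to the line subbundle $L_f$, and the subsequent three circle theorem for $|f|$, the jet-evaluation linear algebra, and the rigidity via equality in both the Hessian comparison and the curvature inequality all match the paper's proof. The delicate point you flag at the end (tracking the limiting directions $(e_1,v)$ at the maximum points as $r\to 0^+$) is handled in the paper by the same limiting argument as in the scalar rigidity of Theorem \ref{thm2}, using $0\geq\Theta_{\alpha\overline\alpha}(f,\overline f)\geq\Theta_{\alpha\overline\alpha}(L_f)(f,\overline f)=0$ and the arbitrariness of the point and frame.
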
 
\begin{remark}
This type of theorem was proved by Ni \cite{[N1]} when $E$ is a nonpositive line bundle and $M$ has nonnegative bisectional curvature.
\end{remark}
\begin{proof}
For any holomorphic section $f\in \mathcal{O}_M(E)$,  the Poincare-Lelong equation says $$\frac{\sqrt{-1}}{2\pi}\partial\overline\partial\log |f|^2 =-\Theta(L_f).$$ Here $L_f$ is the line bundle induced by the section $f$ at the points where $f\neq 0$, $\Theta(L_f)$ is the curvature form of $L_f$. It is well known that the curvature of a Hermitian subbundle is no greater than the curvature of the ambient bundle \cite{[GH]}. Therefore, $$\frac{\sqrt{-1}}{2\pi}\partial\overline\partial\log |f|^2 \geq 0$$ when $f\neq 0$. In particular, $$\Delta(|f|^2)\geq 0,$$ thus $|f|^2$ cannot assume the maximum in the interior part of a domain unless it is a constant. 

By similar arguments in the proof of theorem \ref{thm-10},  we obtain the three circle theorem for $|f|$. Then corollary \ref{cor1} and corollary \ref{cor2} hold. This proves the sharp dimension estimate.
\bigskip

Now we turn to the rigidity.   Fix a point $p\in M$. Let $e_1, e_2,...e_m$ be a local holomorphic basis for $E$ with $\nabla e_i = 0$ at $p$ for $i=1, 2,...m$. Consider a holomorphic chart  $(U, z_1, z_2,...z_n)$ containing $p$.  To show $M$ is flat and $E$ is a flat bundle over $M$, it suffices to show $R_{1\overline{1}1\overline{1}} = 0$ and $\Theta_{1\overline{1}}(e_1, \overline{e_1}) = 0$ at $p$ (since the frame is arbitrary). Here $\Theta$ is the curvature of $E$.  Assume the equality holds in (\ref{eq-28}) for some positive integer $d$.  There exists $f\in \mathcal{O}_d(M, E)$ such that $$f(z_1, z_2, \cdot\cdot\cdot, z_n) = z_1^de_1+\sum\limits_{i=1}^mO(r^{d+1})e_i.$$ Similar to the proof of rigidity of theorem \ref{thm2}, we get that $$\frac{M_f(r)}{r^d}\equiv Constant.$$ Here $M_f(r) = max |f(x)|$, $x\in B(p, r)$.  Then by similar arguments in the proof of theorem \ref{thm-10} (note that the Poincare-Lelong equation gives the curvature), we find that $$R_{YJYJYY} = 0; \Theta_{\alpha\overline{\alpha}}(L_f)(f(x), \overline{f(x)}) = 0$$ at the points on $\partial B(p, r)$ where $|f(x)|$ takes the maximum.   Here $Y = \nabla r$,  $\alpha= \frac{1}{2}(Y-\sqrt{-1}JY)$. Since
$$0\geq\Theta_{\alpha\overline{\alpha}}(f(x), \overline{f(x)})\geq\Theta_{\alpha\overline{\alpha}}(L_f)(f(x), \overline{f(x)}) = 0,$$  $$\Theta_{\alpha\overline{\alpha}}(f(x), \overline{f(x)})=0.$$ Letting $r\to 0$, we find $$R_{1\overline{1}1\overline{1}} = 0; \Theta_{1\overline{1}}(e_1, \overline{e_1}) = 0$$ at $p$. Therefore both $M$ and $E$ are flat. By looking at the pull back bundle of the covering map $\mathbb{C}^n\to M$ and counting dimensions, we find that $M$ is isometric and biholomorphic to $\mathbb{C}^n$.

\end{proof}

\section{\bf{Holomorphic maps between certain noncompact K\"ahler manifolds}}

Given a complex analytic space $X$, the Douady space \cite{[Do]} parametrizes all pure dimensional compact analytic subspace of $X$. When $X$ is projective, the Douady space is exactly the Hilbert Scheme of $X$, as constructed by A. Grothendieck \cite{[Gr]}. The Douady space has a natural complex structure. 

Below we focus on the set $H$ of holomorphic maps from  a complex manifold $M$ to a complex manifold $N$. If $M$ is compact, $H$ is an open set of the Douady space of $M\times N$ \cite{[Do]}.  Thus $H$ a complex analytic space with locally finite dimension.

When $M$ is noncompact, the situation is more complicated. $H$ might have infinite dimension even locally. For example, the space of holomorphic maps from $\mathbb{C}^m$ to $\mathbb{C}^n$ has infinite dimension. However, if we impose a polynomial growth condition, then the dimension is finite. Even with the growth condition, we have to impose some geometric conditions on $M$ and $N$ to ensure that finiteness of the dimension. 
\begin{definition}\label{def0}
Let $N^n$ be complex manifolds. Let $o$ be a point in $\mathbb{C}^m$ and $k$ be a positive integer. For any local holomorphic map $f$ from $U\ni o$  to $N$, define the $k$-jet of the map $f$ as $(f(o), df(o), d^2f(o), \cdot\cdot\cdot, d^kf(o))$. Here $d^kf(o)$ are the partial derivatives with order $k$ in some holomorphic chart containing $f(o)$. Define $J^k_mN =\{(f(o), df(o), d^2f(o), \cdot\cdot\cdot, d^kf(o))\}$ for all holomorphic maps $f$ from a neighorbood of $o$ to $N$. Then it is easy to check that $J^k_mN$ is a complex manifold.
\end{definition}

\begin{definition}
Let $M$ and $N$ be complete complex manifolds. Let $o$ be a point on $M$ and define $r(x) = dist_M(o, x)$.
We say a holomorphic map $f$ from $M$ to $N$ is of polynomial growth with order $k$, denoted by $f\in \mathcal{O}_k(M, N)$, if 
\begin{equation}
\overline{\lim\limits_{r\to \infty}}\frac{dist_N(f(o), f(x))}{r^k}  < \infty.
\end{equation}
We put the compact open topology on $\mathcal{O}_k(M, N)$.\end{definition}

The main result in this section is the following:
\begin{theorem}\label{thm6}
Let $M^m$ be a complete K\"ahler manifold with nonnegative holomorphic sectional curvature and $N^n$ be a simply connected K\"ahler manifold with nonpositive sectional curvature. Let $o$ be a fixed point on $M$ and define $r(x) = dist_M(o, x)$.
For any $k\geq 1$, let $H_k = \mathcal{O}_k(M, N)$. 
 Then the jet map $\iota$: $H_k\to J^{[k]}_mN$ defined by $\iota(f) = (f(o), df(o), d^2f(o), \cdot\cdot\cdot, d^{[k]}f(o))$ is an embedding. Now assume $k$ is a positive integer. Then $\iota$ is a homeomorphism if and only if $M^m, N^n$ are isometric and biholomorphic to complex Euclidean spaces.
\end{theorem}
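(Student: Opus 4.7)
The plan is to reduce both parts of the theorem to the three-circle machinery of Theorem \ref{thm-10} applied to the distance function between two holomorphic maps, together with the dimension estimate of Theorem \ref{thm2} for the rigidity. For injectivity of $\iota$, suppose $f,g\in H_k$ share the same $[k]$-jet at $o$ and set $h(x)=\mathrm{dist}_N(f(x),g(x))$. The NPC hypothesis on $N$, together with the $J$-invariance of its K\"ahler metric, makes $\mathrm{dist}_N^2$ plurisubharmonic on $N\times N$; pulling back by the holomorphic map $(f,g):M\to N\times N$ gives $h^2$ plurisubharmonic on $M$. In a holomorphic chart on $N$ centered at $f(o)=g(o)$, the difference $f-g$ is a local holomorphic section of $f^*TN$ vanishing to order $[k]+1$ at $o$, and a Poincar\'e--Lelong computation combined with the NPC curvature bound upgrades this to the plurisubharmonicity of $\log h$ on $M\setminus\{h=0\}$. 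Feeding $\log h$ into the Hessian comparison $+$ Calabi trick $+$ maximum principle argument from the proof of Theorem \ref{thm-10} produces analogues of Corollaries \ref{cor1} and \ref{cor2} for $h$: $M_h(r)/r^k$ is nonincreasing and $M_h(r)/r^{[k]+1}$ is nondecreasing. Since $[k]+1>k$, this forces $h\equiv 0$, so $f=g$. Continuity of $\iota$ is immediate from Cauchy estimates for holomorphic maps, and continuity of $\iota^{-1}$ on the image follows by applying the same monotonicity to $\mathrm{dist}_N(f_n,f)$ along jet-convergent sequences.

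For the rigidity, the direction $(\Leftarrow)$ is a direct verification: when $M,N$ are flat Euclidean spaces, $H_k$ coincides with polynomial maps of degree $\leq[k]$ and $\iota$ is a linear isomorphism onto $J^{[k]}_m\mathbb{C}^n$. For $(\Rightarrow)$, assume $\iota$ is a homeomorphism. First, $N$ is Stein since $\mathrm{dist}_N(q_0,\cdot)^2$ is a smooth strictly plurisubharmonic proper exhaustion, by Cartan--Hadamard and NPC. Surjectivity of $\iota$ yields, for every $\ell\in T_o^*M$ and $v\in T_{q_0}N$, a map $f_{\ell,v}\in H_k$ with $f_{\ell,v}(o)=q_0$, $df_{\ell,v}(o)=v\otimes\ell$, and higher jets at $o$ vanishing; the three-circle estimate of the first part then shows $f_{\ell,v}$ has linear growth. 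Composing with global holomorphic functions on $N$ that form local coordinates near $q_0$ (available from the Stein structure) and letting $\ell$ range over $T_o^*M$ produces $m$ linearly independent polynomial-growth holomorphic functions on $M$ realizing every linear jet at $o$, which forces equality in Theorem \ref{thm2} for some positive integer $d$; the rigidity case then gives $M\cong\mathbb{C}^m$ isometrically and biholomorphically.

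Once $M=\mathbb{C}^m$, restricting $f_{\ell,v}$ to a complex line through $0$ yields, for each nonzero $v\in T_{q_0}N$, a holomorphic map $\gamma_v:\mathbb{C}\to N$ of linear growth with $\gamma_v'(0)=v$. By the monotonicity formula for holomorphic curves in an NPC target, the area of $\gamma_v(D(0,R))$ grows at most like $R^2$, so $\int_{D(0,R)}|\gamma_v'|_N^2\leq CR^2$; since $|\gamma_v'|_N^2$ is plurisubharmonic on $\mathbb{C}$ (from the nonpositive holomorphic sectional curvature of $N$), the mean value inequality makes it bounded on $\mathbb{C}$, and subharmonic Liouville forces it constant. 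The identity $\partial_{\bar z}\partial_z|\gamma_v'|_N^2\geq -R_N(\gamma_v',\overline{\gamma_v'},\gamma_v',\overline{\gamma_v'})\geq 0$ then gives $R_N(v,\bar v,v,\bar v)=0$ for every $v\in T_{q_0}N$, and varying $q_0$ across $N$ (using bijectivity of $\iota$ again) shows all holomorphic sectional curvatures of $N$ vanish; hence $N$ is flat, and a complete simply connected flat K\"ahler manifold is $\mathbb{C}^n$. The hardest step throughout will be the plurisubharmonicity of $\log h$ in the injectivity argument: unlike $\log|f|$ for a holomorphic $f$, $\log\mathrm{dist}_N(f,g)$ is not literally the log of a section norm in a flat bundle, and justifying it requires a careful Poincar\'e--Lelong analysis on the pullback tangent bundle $f^*TN$ using the NPC hypothesis on $N$.
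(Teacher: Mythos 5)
Your overall strategy (plurisubharmonicity of the log-distance between two maps, three-circle monotonicity for injectivity, and a dimension-rigidity argument for the Euclidean conclusion) matches the paper's, but three steps you rely on are not actually established, and two of them would fail as proposed. First, the central lemma that $\log \mathrm{dist}_N(f,g)$ is plurisubharmonic cannot be obtained by a Poincar\'e--Lelong computation on $f^*T^{1,0}N$: the ``difference'' $f-g$ is a formal jet at $o$, not a holomorphic section of any bundle over $M$, so there is no divisor or induced line bundle to which Poincar\'e--Lelong applies. You flag this yourself as the hardest step, but the route you propose does not close it. The paper proves this lemma (its Lemma \ref{lem1}) by a direct second-variation-of-arclength computation for the family of geodesics joining $f(x)$ to $g(x)$, using that $f$ and $g$ are holomorphic to kill the $\nabla_t t$ boundary terms and the nonpositive curvature of $N$ to discard the curvature term. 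Second, continuity of $\iota^{-1}$ does not follow from ``the same monotonicity applied to $\mathrm{dist}_N(f_n,f)$'': the monotonicity $M_h(r)/r^k\searrow$ propagates smallness from a fixed inner radius outward, but convergence of the $[k]$-jets at the single point $o$ gives no control of $\mathrm{dist}_N(f_n,f)$ on any ball of fixed radius. This is exactly why the paper proves Proposition \ref{prop-9} (compactness of $\iota^{-1}(K)$ for compact $K$) via a normal-families dichotomy, with the rescaling $g_j(z)=f_j(\delta_j z)$ and Lemma \ref{lem-3} ruling out blow-up; that argument is entirely absent from your proposal.

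Third, in the rigidity direction your production of linear-growth objects is unjustified at two points. Surjectivity of $\iota$ gives you $f_{\ell,v}\in H_k$ whose jets of orders $2,\dots,[k]$ vanish at $o$, but vanishing of finitely many higher jets does not force linear growth (the analogue of Corollary \ref{cor2} only gives the lower bound $M(f,r)\gtrsim r$); and composing with global holomorphic coordinate functions on the Stein manifold $N$ does not preserve polynomial growth, since such functions need not grow polynomially in $\mathrm{dist}_N$ on a general Cartan--Hadamard target. So you never actually reach ``equality in Theorem \ref{thm2}.'' The paper avoids both problems by linearizing $H_k$ at a map $f$: limits of normalized differences $f_i\to f$ give holomorphic sections of $f^*T^{1,0}N$ of growth order $k$, this bundle is Griffith-nonpositive because $N$ has nonpositive curvature, and Theorem \ref{thm5} (applied at a constant map, where surjectivity of $\iota$ forces equality in the dimension bound) yields both $M\cong\mathbb{C}^m$ and flatness data; flatness of $N$ is then read off from the equality case of the Hessian comparison in Lemma \ref{lm-7} along maps with jet $z_1^k+O(r^{k+1})$, rather than from your area/Liouville argument for entire curves (where the claimed bound $\int_{D(0,R)}|\gamma_v'|^2\le CR^2$ from linear growth of the map is itself not automatic).
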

\begin{remark}
It is highly possible that the image of $H_k$ in $J^{[k]}_mN$ is a complex analytic subspace.
\end{remark}
\begin{proof} We first introduce the following proposition:
\begin{prop}\label{prop-9}
Let $K$ be a compact set in $J^{[k]}_mN$. Define $H_{k, K} = \iota^{-1}(K)$. Then $H_{k, K}$ is compact.
\end{prop}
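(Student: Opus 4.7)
Given $\{f_j\}\subset H_{k,K}$, I aim to extract a subsequence converging uniformly on compacta of $M$ to some $f_\infty\in H_{k,K}$. Three ingredients drive the argument: a three-circle/monotonicity statement for $M_{f_j}(r):=\sup_{B(o,r)}d_N(f_j(o),f_j(\cdot))$; a uniform a priori bound on compacta derived by a blow-up contradiction; and a Montel-type extraction.

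\textbf{Three circle theorem for maps.} Since $N$ is a simply connected K\"ahler manifold of nonpositive sectional curvature (and hence of nonpositive holomorphic bisectional curvature), $\log d_N(y_0,\cdot)$ is plurisubharmonic on $N\setminus\{y_0\}$ for each $y_0\in N$. Pulled back via the holomorphic $f_j$, $\log d_N(f_j(o),f_j(\cdot))$ is plurisubharmonic on $M$ off the fibre over $f_j(o)$. Inspecting the proof of Theorem \ref{thm-10}, the only properties of the test function $G$ that are actually used are $G_{1\overline{1}}\ge 0$ together with finiteness at the maximum point of $G-F$; both hold here, so the proof applies verbatim. Hence $\log M_{f_j}(r)$ is convex in $\log r$, and combined with $f_j\in\mathcal{O}_k(M,N)$ the argument of Corollary \ref{cor1} yields that $r\mapsto M_{f_j}(r)/r^k$ is nonincreasing.

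\textbf{Uniform bound by blow-up.} By compactness of $K$, after passing to a subsequence $\iota(f_j)\to J_\infty\in K$; in particular $f_j(o)\to y_\infty\in N$ and the derivatives of $f_j$ at $o$ up to order $[k]$ are uniformly bounded. By the monotonicity, it suffices to produce a uniform bound $M_{f_j}(r_0)\le C$ for some fixed small $r_0$, since then $M_{f_j}(R)\le C(R/r_0)^k$ for all $R\ge r_0$. Suppose otherwise: $\lambda_j:=M_{f_j}(r_0)\to\infty$ along a subsequence. Rescale the domain metric by $r_0^{-2}g_M$ and the target metric by $\lambda_j^{-2}g_N$; then $(M,r_0^{-2}g_M,o)$ converges (Gromov--Hausdorff, via the framework of Section 3) to $\mathbb{C}^m$ and $(N,\lambda_j^{-2}g_N,y_\infty)$ to $\mathbb{C}^{\dim N}$. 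The rescaled $\tilde f_j$ satisfies $M_{\tilde f_j}(1)=1$ and, by the rescaled monotonicity, $M_{\tilde f_j}(R)\le R^k$ for $R\ge 1$; a Montel extraction produces a holomorphic limit $\tilde f_\infty\colon\mathbb{C}^m\to\mathbb{C}^{\dim N}$ with $M_{\tilde f_\infty}(1)=1$. Meanwhile the $i$-th derivative of $\tilde f_j$ at $o$ scales as $r_0^i/\lambda_j\to 0$ for $i\le[k]$, so the $[k]$-jet of $\tilde f_\infty$ at $0$ vanishes and its vanishing order at $0$ is strictly greater than $k$. Each component of $\tilde f_\infty$ then lies in $\mathcal{O}_k(\mathbb{C}^m)$ and vanishes to order $>k$ at $0$, which by the rigidity argument in the proof of Theorem \ref{thm2} (comparing Corollaries \ref{cor1} and \ref{cor2}) forces it to be identically zero. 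This contradicts $M_{\tilde f_\infty}(1)=1$.

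\textbf{Extraction and limit.} The uniform bound $M_{f_j}(R)\le C(R/r_0)^k$ on every ball makes $\{f_j\}$ locally uniformly bounded in the Cartan--Hadamard manifold $N$, hence a normal family in local holomorphic charts; a diagonal subsequence $f_{j_l}\to f_\infty$ converges uniformly on compacta to a holomorphic map $f_\infty\colon M\to N$. The limit satisfies $M_{f_\infty}(R)\le CR^k$, so $f_\infty\in\mathcal{O}_k(M,N)$, and uniform local convergence together with holomorphy gives $\iota(f_\infty)=J_\infty\in K$, so $f_\infty\in H_{k,K}$. The principal technical hurdle is the blow-up step, specifically making rigorous the convergence of the rescaled maps into a Euclidean limit; this relies on the Gromov--Hausdorff convergence theory recalled in Section 3 applied simultaneously to both source and target.
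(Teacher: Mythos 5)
Your first and third steps match the paper: Lemma \ref{lm-7} establishes exactly the plurisubharmonicity of $\log d_N(f(o),f(\cdot))$ via the Hessian comparison on $N$, Claim \ref{cl-3} is your monotonicity of $M(f,r)/r^k$, and the paper's ``Case 1'' is your final extraction (the paper uses a proper holomorphic embedding of the Stein manifold $N$ into $\mathbb{C}^{2n+1}$ where you invoke normal families in charts; either works). The problem is the blow-up step, which you correctly identify as the technical hurdle but do not actually close.

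The gap is your claim that $(N,\lambda_j^{-2}g_N,y_\infty)$ converges in the Gromov--Hausdorff sense to $\mathbb{C}^{\dim N}$. Since $\lambda_j\to\infty$, this is a blow-\emph{down} of $N$, i.e.\ its asymptotic cone, and for a general Cartan--Hadamard manifold this is not Euclidean space (for complex hyperbolic space it is an $\mathbb{R}$-tree, not even a manifold). Moreover the framework of Section 3 requires a uniform Ricci \emph{lower} bound, which the rescaled targets do not have: nonpositive sectional curvature gives no Ricci lower bound, and rescaling by $\lambda_j^{-2}$ multiplies curvature by $\lambda_j^{2}$. The parallel claim that $(M,r_0^{-2}g_M,o)$ ``converges to $\mathbb{C}^m$'' also fails, since $r_0$ is fixed and this is a single rescaled copy of $M$. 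So the limit object $\tilde f_\infty\colon\mathbb{C}^m\to\mathbb{C}^{\dim N}$ on which your contradiction rests is not produced. The paper's Case 2 sidesteps all of this by rescaling in the opposite direction: it shrinks the \emph{domain}, setting $g_j(z)=f_j(\delta_j z)$ in a fixed holomorphic chart at $o$ with $\delta_j\to 0$ chosen so that the oscillation $M(g_j,|z|\le \tfrac14)$ equals $1$. The images then stay in a fixed bounded region of the unrescaled target $N$, so an honest Montel limit $g$ exists as a holomorphic map into $N$; the transferred monotonicity (Lemma \ref{lem-3}, with harmless factors of $4^k$ from the quasi-isometry between the chart coordinates and the distance) plus the vanishing of the $[k]$-jet of $g$ at $0$ forces $g$ to be constant, contradicting the normalization. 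If you replace your target blow-down by this domain-shrinking normalization, the rest of your argument goes through.
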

\begin{proof}
 For notational convenience, let $d(f(x), f(o)) = dist_N(f(x), f(o))$. 
\begin{lemma}\label{lm-7}
 $\log d(f(o), f(x))$ is plurisubharmonic.  
 \end{lemma} 
 \begin{proof}
 Consider a unitary frame $e_i$ at $x$ and a holomorphic chart $(V, w_1, \cdot\cdot\cdot, w_n)$ containing $f(x)$ such that $\frac{\partial}{\partial w_\alpha} =  e_\alpha$ has unit length at $f(x)$.
 \begin{equation}
\begin{aligned}
 (\log d(f(o), f(x)))_{i\overline{i}} &= (\frac{d_\alpha\alpha_i}{d})_{\overline{i}} \\&= \frac{d_{\alpha\overline{\beta}}\overline\beta_{\overline{i}}\alpha_i}{d}+\frac{d_\alpha\alpha_{i\overline{i}}}{d}-|\frac{d_\alpha\alpha_i}{d}|^2\\&\geq 0 .
 \end{aligned}
 \end{equation} 
 Here we have used the Hessian comparison theorem $d_{\alpha\overline{\beta}}\geq \frac{1}{2d}\delta_{\alpha\beta}$; $\alpha_{i\overline{i}} = 0$ ; $|d_\alpha|\leq \frac{1}{\sqrt{2}}$. 
 \end{proof}
 
 Let $M(f, r)$ be the maximum of $d(f(o), f(x))$ for $x\in B(o, r)$.  By lemma \ref{lm-7} and similar arguments as before, we have
 \begin{claim}\label{cl-3}
$\frac{M(f, r)}{r^k}$ is nonincreasing. 
\end{claim}
Consider a sequence $f_j\in H_{k, K}$. We may assume $f_j(o)\to p\in N$. Consider a small holomorphic chart $U(o, z_1, z_2,...z_m)$ in $M$ such that for $x = (z_1,...,z_m) \in U$,
 \begin{equation}\label{1}
 \frac{1}{2}\sqrt{\sum\limits_{i=1}^m|z_i|^2}\leq |x|=dist(o, x)\leq 2\sqrt{\sum\limits_{i=1}^m|z_i|^2}.
 \end{equation} 
 
By scaling the metric on $M$, we may assume $U$ contains the points $z= (z_1, z_2,...., z_m)$ where $|z| = \sqrt{\sum\limits_{i=1}^m|z_i|^2}\leq 1$.

Case 1:
There exist $\delta > 0$, $M > 0$ and a subsequence $f_{j_s}$ so that $dist(f_{j_s}(o), f_{j_s}(x))\leq M$ for all $x\in B(o, \delta)$. By claim \ref{cl-3}, for any $r\geq \delta$, $$M(f_{j_s}, r)\leq r^k\frac{M}{\delta^k}.$$
Since $N$ is simply connected with nonpositive sectional curvature, $N$ is a Stein manifold. Therefore $N$ could be properly holomorphically embedded in $\mathbb{C}^{2n+1}$. Then there exist uniform bounds of the derivatives of $f_{j_s}$ in each compact set of $M$.
Thus there exists a subsequence of $f_{j_s}$ converging to some $f\in H_{k, K}$.

\bigskip

Case 2: Case 1 does not hold. 
Then for large $j$, there exists $\delta_j \to 0$ satisfying $$M(f_j(\delta_jz), |z|\leq \frac{1}{4}) = 1.$$ Here $z$ is in $U$, $\delta z = \delta\cdot(z_1, \cdot\cdot\cdot, z_m) = (\delta z_1, \cdot\cdot\cdot, \delta z_m)$. 
Define $g_j (z)= f_j(\delta_jz)$. Note $g_j$ is only defined on $U$.
\begin{lemma}\label{lem-3}
For $\frac{1}{3}\geq r_1 \geq r_2>0$, $\frac{M(g_j, |z|\leq r_1)}{4^kr_1^k}\leq \frac{M(g_j, |z|\leq r_2)}{r_2^k}$.
\end{lemma}
\begin{proof}
We have \begin{equation}
\begin{aligned}\frac{M(g_j, |z|\leq r_1)}{2^k\delta_j^kr_1^k}&= \frac{M(f_j(\delta_jz), |z|\leq r_1)}{2^k\delta_j^kr_1^k}\\&\leq \frac{M(f_j, 2\delta_jr_1)}{(2\delta_jr_1)^k}\\&\leq \frac{M(f_j, \frac{1}{2}\delta_jr_2)}{(\frac{1}{2}\delta_jr_2)^k}\\&\leq \frac{M(f_j(\delta_jz), |z|\leq r_2)}{(\frac{1}{2}\delta_jr_2)^k}\\&=\frac{M(g_j, |z|\leq r_2)}{(\frac{1}{2})^k\delta_j^kr_2^k}.\end{aligned}\end{equation} In the middle, we have used claim \ref{cl-3} and (\ref{1}).
\end{proof}
Now $g_j(0) \to p$ and $M(g_j, |z|\leq \frac{1}{4}) = 1$. By lemma \ref{lem-3}, $M(g_j, |z|\leq\frac{1}{3})$ is uniformly bounded. Then we can find a subsequence such that $g_j\to g$ uniformly for $|z|\leq \frac{1}{4}$. Thus $g$ is not a constant map.
Uniform convergence and lemma \ref{lem-3} imply the following:
\begin{lemma}\label{lem-4}
For $\frac{1}{4}\geq r_1 \geq r_2>0$, $\frac{M(g, |z|\leq r_1)}{4^kr_1^k}\leq \frac{M(g, |z|\leq r_2)}{r_2^k}$. 
\end{lemma}
Since $K$ is compact in $J^{[k]}_mN$, the partial derivatives of $f_j$ at $o$ are uniformly bounded up to order $[k]$. As $\delta_j\to 0$, the partial derivatives of $g_j$ at $o$ converge to $0$ up to order $[k]$. Therefore, the partial derivatives of $g$ at $o$ vanish up to order $[k]$. Combining this with lemma \ref{lem-4}, we find $g$ to be a constant map (let $r_2\to 0$ in lemma \ref{lem-4}). This is a contradiction.
Thus Case 2 cannot happen! This completes the proof of proposition \ref{prop-9}.
\end{proof}
\begin{lemma}\label{lem1}
Let $M^m$ be a complete K\"ahler manifold and $N^n$ be a simply connected K\"ahler manifold with nonpositive sectional curvature. Let $f, g$ be holomorphic maps from $M$ to $N$. Then $\log d(f(x), g(x))$ is a plurisubharmonic function on $M$.
\end{lemma}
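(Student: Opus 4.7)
It suffices to show that $\log d$ is plurisubharmonic on $(N\times N)\setminus \Delta$ (with $\Delta$ the diagonal) with respect to the product K\"ahler structure: the map $(f,g)\colon M\to N\times N$ is holomorphic, so pulling back via it preserves plurisubharmonicity. Equivalently, for any pair of holomorphic disks $p,q\colon D\to N$ with $p(\zeta_0)\ne q(\zeta_0)$, the function $\zeta\mapsto \log d(p(\zeta),q(\zeta))$ must be subharmonic at $\zeta_0$.

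I would verify this by mimicking the computation of Lemma~\ref{lm-7}, now with two ``arms''. Let $\gamma\colon[0,1]\to N$ be the unique minimizing geodesic from $p(\zeta_0)$ to $q(\zeta_0)$, parametrized with constant speed $d=d(p(\zeta_0),q(\zeta_0))$, and let $\Gamma(\zeta,t)$ denote the family of minimizing geodesics from $p(\zeta)$ to $q(\zeta)$ for $\zeta$ near $\zeta_0$. Writing $h(\zeta)=d(p(\zeta),q(\zeta))$ so that $h^2(\zeta)=\int_0^1|\partial_t\Gamma|^2\,dt$, the first variation of energy (using torsion-freeness and $\nabla_t T=0$) gives
\[
\partial_\zeta (h^2) \;=\; 2\bigl[g(T(1),\partial_\zeta q) - g(T(0),\partial_\zeta p)\bigr], \qquad T=\partial_t\Gamma.
\]
A second differentiation produces an expression for $(h^2)_{\zeta\bar\zeta}$ in terms of the Jacobi field along $\gamma$ induced by the variation $\partial_{\bar\zeta}\Gamma$. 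The nonpositive sectional curvature of $N$ contributes nonnegatively through the curvature term $-R(V,T,T,V)\ge 0$ in the second variation, producing a lower bound on $(h^2)_{\zeta\bar\zeta}$. Combining this with the Cauchy--Schwarz estimate on $|h_\zeta|^2$ coming from the first variation above yields the pointwise inequality $h^2\cdot (h^2)_{\zeta\bar\zeta}\ge |(h^2)_\zeta|^2$, which is exactly the subharmonicity of $\log h$.

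The main obstacle is the two-arm bookkeeping: unlike in Lemma~\ref{lm-7}, where only one endpoint of $\gamma$ moves, here both $p(\zeta)$ and $q(\zeta)$ vary, so the relevant Jacobi field has two nontrivial boundary values, and one must balance the first-variation contributions from $p$ and $q$ against the second-variation lower bound. In the Euclidean model $N=\mathbb{C}^n$, with $w(\zeta)=q(\zeta)-p(\zeta)$, the Jacobi field is linear in $t$, the curvature term vanishes, and the required inequality reduces to the classical Cauchy--Schwarz inequality $|\langle w',w\rangle|^2\le |w'|^2|w|^2$ (with equality when $w'$ is complex-proportional to $w$). The general Cartan--Hadamard K\"ahler case follows by extending this calculation: the parallelism of $J$ along $\gamma$ (the K\"ahler condition) preserves the Euclidean-style decomposition in a unitary frame adapted to $T$ and $JT$, while the nonpositive curvature provides only nonnegative corrections to the Hessian lower bound. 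Executing this carefully in such an adapted frame is the technical heart, entirely parallel to (and only marginally more involved than) the one-arm calculation in Lemma~\ref{lm-7}.
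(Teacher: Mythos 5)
Your plan is essentially the paper's own argument, merely reorganized: the paper works directly on $M$ with the two real variations $F_1,F_2$ in the directions $\mathrm{Re}\,\partial/\partial z_i$ and $\mathrm{Im}\,\partial/\partial z_i$, while you first establish plurisubharmonicity of $\log d$ on $(N\times N)\setminus\Delta$ and pull back by the holomorphic map $(f,g)$ — but the analytic core (first and second variation of arc length along the connecting geodesic, the sign of the curvature term, and Cauchy--Schwarz on the index form, with the Euclidean model $|\langle w',w\rangle|^2\le|w'|^2|w|^2$) is identical, and your outline of it is correct. The one step you gloss over, and which the paper isolates as Lemma \ref{lm-0}, is that the endpoint acceleration terms $\langle\nabla_V V,T\rangle\big|_{\lambda=0}^{\lambda=1}$ in the second variation cancel only after summing the two conjugate real directions, precisely because $p$ and $q$ (equivalently $f$ and $g$) are holomorphic maps into a K\"ahler target — this, rather than the parallelism of $J$ along $\gamma$, is where holomorphicity enters and must not be omitted.
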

\begin{proof}
We may assume that at a point $x$, $f(x)\neq g(x)$.
 Let $\gamma$ be the geodesic connecting $f(x)$ and $g(x)$ and $e_\alpha$ $(\alpha = 1, 2,..., 2n)$ be a parallel orthonormal frame on $\gamma$. Let $e_1$ be tangential to the $\gamma$. We also assume $Je_{2s-1} = e_{2s}$ for $s=1, 2,..., n$. Define $h_s = \frac{1}{\sqrt{2}}(e_{2s-1}-\sqrt{-1}e_{2s})$. Near $x\in M$, consider a normal coordinate $(U, z_1,..., z_m)$. 
 \begin{equation}\label{0}
 \begin{aligned}
 \frac{\partial^2 \log d(f(x), g(x))}{\partial z_j\partial{\overline{z_j}}} &= (\frac{(d(f(x), g(x)))_j}{d(f(x), g(x))})_{\overline{j}}\\& = \frac{(d(f(x), g(x))_{j\overline{j}}}{d(f(x), g(x))}-|
\frac{(d(f(x), g(x)))_j}{d(f(x), g(x))}|^2.
\end{aligned}
\end{equation}

We need the second variation of arc length. For completeness, we include the calculations.
Let $F(\lambda, t): [0, 1]\times (-\epsilon, \epsilon)\to N$ be a smooth map. Set $X = F_*{\frac{\partial}{\partial\lambda}}$. We assume that for fixed $t$, $F(\lambda, t)$ is a geodesic in $N$ and $\nabla_XX = 0$.
For each $t$, let $L(t)$ be the arc length $\int\limits_{0}^{1}\sqrt{\langle X, X\rangle}d\lambda$.  To simplify the notation, we write $t = \frac{\partial}{\partial t}$.
First variation at $t=0$:
\begin{equation}\label{2}
\begin{aligned}
\frac{dL(t)}{dt}&=\int\limits_{0}^{1} \frac{\langle\nabla_tX, X\rangle}{\sqrt{\langle X, X\rangle}}d\lambda\\&=\int\limits_{0}^{1} \frac{\langle\nabla_Xt, X\rangle}{\sqrt{\langle X, X\rangle}}d\lambda\\&=\frac{\langle t, X\rangle}{L(0)}|_{\lambda=0}^{\lambda=1}.
\end{aligned}\end{equation}
Second variation at $t=0$:
\begin{equation}\label{3}
\begin{aligned}\frac{d^2L(t)}{dt^2}&=\frac{d}{dt}(\int\limits_{0}^{1} \frac{\langle\nabla_Xt, X\rangle}{\sqrt{\langle X, X\rangle}}d\lambda)\\&=\int\limits_{0}^{1} \frac{\langle\nabla_t\nabla_Xt, X\rangle+|\nabla_Xt|^2}{\sqrt{\langle X, X\rangle}}-\frac{|\langle\nabla_Xt, X\rangle|^2}{{\langle X, X\rangle}^{\frac{3}{2}}}d\lambda\\& =\int\limits_{0}^{1} \frac{R_{tXtX}+\langle\nabla_X\nabla_tt, X\rangle+|\nabla^\perp_Xt|^2}{\sqrt{\langle X, X\rangle}}d\lambda\\&\geq \frac{\int\limits_{0}^{1}|\nabla^\perp_Xt|^2d\lambda+\langle\nabla_tt, X\rangle|_{\lambda=0}^{\lambda=1}}{L(0)}. \end{aligned}\end{equation} Here $\perp$ is the projection orthogonal to $X$.

\bigskip

Let $\gamma_1, \gamma_2$ be two normal geodesics on $M$ starting from $x$, with initial tangent vectors $Re\frac{\partial}{\partial z_i}$ and $Im\frac{\partial}{\partial z_i}$. Let $F_1(\lambda, t), F_2(\lambda, t)$ be two maps satisfying the properties of $F(\lambda, t)$ above. The boundary conditions are $F_j(0, t) = f(\gamma_j(t)), F_j(1, t) = g(\gamma_j(t)), j = 1, 2$. Let $L_j$ be the length function for $F_j$.
\begin{lemma}\label{lm-0}
$\nabla_{t_1}t_1+\nabla_{t_2}t_2 = 0$ for $\lambda = 0$ or $\lambda = 1$. Here $t_j= \frac{\partial}{\partial t_j}, j= 1, 2$.
\end{lemma}
\begin{proof}
For $\lambda = 0$, $\nabla_{t_1}t_1+\nabla_{t_2}t_2=\nabla_{f_*{\frac{\partial}{\partial z_i}}}f_*{\frac{\partial}{\partial\overline{z_i}}} = 0$, since $f$ is holomorphic.
Similarly, the result holds for $\lambda = 1$.
\end{proof}

Let $$f_*\frac{\partial}{\partial z_i} = \sum\limits_{s=1}^{m}(a_s(e_{2s-1}-\sqrt{-1}e_{2s})+b_s(e_{2s}+\sqrt{-1}e_{2s-1}));$$ $$g_*\frac{\partial}{\partial z_i} = \sum\limits_{s=1}^{m}c_s(e_{2s-1}-\sqrt{-1}e_{2s})+d_s(e_{2s}+\sqrt{-1}e_{2s-1})).$$ Also let $$\frac{\partial}{\partial t_1}|_{t_1=0} = \sum\limits_{s=1}^{m}(u_{s}e_{2s-1}+v_{s}e_{2s}); \frac{\partial}{\partial t_2}|_{t_2=0} = \sum\limits_{s=1}^{m}(\tilde u_{s}e_{2s-1}+\tilde v_{s}e_{2s}).$$ Note that $$v_1(0) = b_1, \tilde v_1(0) = -a_1, v_1(1) = d_1, \tilde v_1(1) = -c_1.$$ 
By (\ref{3}) and Lemma \ref{lm-0},
\begin{equation}
\begin{aligned}
\frac{d^2L_1(t_1)}{dt_1^2}+\frac{d^2L_2(t_2)}{dt^2}&\geq \frac{\int\limits_{0}^{1}|v_1'(\lambda)|^2+|\tilde v_1'(\lambda)|^2+\sum\limits_{s=2}^{m}(|u_s'(\lambda)|^2+|v_s'(\lambda)|^2+|\tilde u_s'(\lambda)|^2+|\tilde v_s'(\lambda)|^2)d\lambda}{L(0)}\\&\geq  \frac{\int\limits_{0}^{1}(|v_1'(\lambda)|^2+|\tilde v_1'(\lambda)|^2)d\lambda}{L(0)}\\&\geq \frac{(b_1-d_1)^2+(a_1-c_1)^2}{L(0)}.
\end{aligned}
\end{equation}

  In the last step, we used the Cauchy-Schwarz inequality. By (\ref{2}), $$\frac{dL_1(t)}{dt} = c_1-a_1, \frac{dL_2(t)}{dt} = b_1-d_1.$$ The lemma follows if we plug these (\ref{0}).\end{proof}
  
It is clear $\iota$ is a continuous map.  
  Let $f, g\in H_{k}$. Set $M(r) = \max d(f(x), g(x))$ for $x\in B(o, r)$. Then there exists a constant $C_1>0$ such that $M(r)\leq C_1r^k$ for $r$ sufficiently large. Lemma \ref{lem1} and previous arguments imply that 
  \begin{lemma}\label{lm34}
  $\frac{M(r)}{r^k}$ is nonincreasing.
  \end{lemma}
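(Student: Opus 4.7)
\textbf{Proof proposal for Lemma \ref{lm34}.}

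The plan is to copy the proof of Theorem \ref{thm-10} almost verbatim, replacing the plurisubharmonic function $\log|f|$ by $u(x) = \log d(f(x), g(x))$ (which is plurisubharmonic by Lemma \ref{lem1}), establish a three circle inequality for $M(r)$, and then deduce the monotonicity by the argument of Corollary \ref{cor1}. More precisely, I will show: for $0 < r_1 < r_2 < r_3$,
\begin{equation}\label{eq-34-3c}
\log(r_3/r_1)\log M(r_2) \leq \log(r_3/r_2)\log M(r_1) + \log(r_2/r_1)\log M(r_3),
\end{equation}
and then let $r_3 \to \infty$ using $M(r_3)\leq C_1 r_3^k$.

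For \eqref{eq-34-3c}, fix $r_1 < r_3$ and set
$$F(x) = \log(r_3/r)\log M(r_1) + \log(r/r_1)\log M(r_3), \qquad G(x) = \log(r_3/r_1)\, u(x),$$
on the annulus $B(o,r_3)\setminus B(o,r_1)$, where $r = d(o,x)$. We may assume $M(r_1) < M(r_3)$; otherwise $u$ attains its maximum in the interior and the plurisubharmonicity in Lemma \ref{lem1} together with the maximum principle forces $d(f,g)$ to be constant, in which case \eqref{eq-34-3c} is trivial. On $\partial B(o,r_1)\cup\partial B(o,r_3)$ we have $G\leq F$ by definition of $M(r)$. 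Setting $e_1 = \tfrac{1}{\sqrt{2}}(\nabla r - \sqrt{-1} J\nabla r)$, Theorem \ref{thm3} gives $F_{1\overline 1}\leq 0$ (since the coefficient of $\log r$ in $F$ is positive), while Lemma \ref{lem1} gives $G_{1\overline 1}\geq 0$ wherever $u$ is finite. These are the two ingredients used in the proof of Theorem \ref{thm-10}.

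To turn the soft inequalities into the strict inequality needed for the maximum principle, I would use exactly the Calabi perturbation from the proof of Theorem \ref{thm-10}. Replace $F$ by $F_\epsilon(x) = a_\epsilon \log(r-\epsilon) + b_\epsilon$, with $a_\epsilon, b_\epsilon$ chosen so that $F_\epsilon|_{\partial B(o,r_1)} = \log M(r_1)$ and $F_\epsilon|_{\partial B(o,r_3)} = \log M(r_3)$; then $a_\epsilon > 0$ and Theorem \ref{thm3} gives
$$(\log(r-\epsilon))_{1\overline 1} = \frac{r_{1\overline 1}}{r-\epsilon} - \frac{1}{2(r-\epsilon)^2} < 0,$$
so $(G-F_\epsilon)_{1\overline 1} > 0$ at any smooth point. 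At an interior maximum $q_\epsilon$ of $G-F_\epsilon$ off the cut locus this is a contradiction. For $q_\epsilon$ on the cut locus, use Calabi's trick: pick $p_1$ on the minimizing geodesic from $o$ to $q_\epsilon$ at distance $\epsilon_1<\epsilon$ from $o$, replace $r$ by $\hat r(x) = d(p_1,x)+\epsilon_1$, and note $F_{\epsilon,\epsilon_1} := a_\epsilon\log(\hat r-\epsilon)+b_\epsilon \geq F_\epsilon$ with equality at $q_\epsilon$; the same computation applies to $\hat r$, giving a contradiction. Sending $\epsilon_1 \to 0$ and then $\epsilon \to 0$ yields \eqref{eq-34-3c}. (The fact that $u$ may take the value $-\infty$ on the zero set $\{f=g\}$ causes no problem: at an interior maximum of $G-F_\epsilon$, $G$ is finite, so $q_\epsilon\notin\{f=g\}$ and the pointwise Hessian inequality holds there.)

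With \eqref{eq-34-3c} established, the monotonicity follows as in Corollary \ref{cor1}: using $\log M(r_3) \leq k\log r_3 + \log C_1$ for large $r_3$, divide \eqref{eq-34-3c} by $\log(r_3/r_1)\log(r_3/r_2)$ and send $r_3\to\infty$ to obtain
$$\log M(r_2) - \log M(r_1) \leq k(\log r_2 - \log r_1),$$
i.e.\ $M(r)/r^k$ is nonincreasing. The main technical point is, as in Theorem \ref{thm-10}, dealing with the cut locus via Calabi's trick; the plurisubharmonicity from Lemma \ref{lem1} is the only new input and plays the exact role that the Poincar\'e--Lelong identity $(\log|f|)_{1\overline 1}=0$ (on $\{f\neq 0\}$) played in the original argument.
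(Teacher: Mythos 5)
Your proof is correct and follows exactly the route the paper intends: the paper's own justification of this lemma is the one-line remark that Lemma \ref{lem1} and the ``previous arguments'' imply it, and you have spelled out precisely those arguments — the maximum-principle proof of Theorem \ref{thm-10} with $\log|f|$ replaced by the plurisubharmonic function $\log d(f(x),g(x))$, including the Calabi cut-locus trick, followed by the limiting argument of Corollary \ref{cor1} using the bound $M(r)\leq C_1r^k$. One cosmetic slip: in the final step one divides the three circle inequality by $\log(r_3/r_1)$ alone (not by the product $\log(r_3/r_1)\log(r_3/r_2)$) before letting $r_3\to\infty$; the inequality you then state, $\log M(r_2)-\log M(r_1)\leq k(\log r_2-\log r_1)$, is the correct conclusion.
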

  If $\iota(f) = \iota(g)$, there exists a constant $C$ such that $M(r) \leq Cr^{1+[k]}$ for all small $r$.   Letting $r\to 0$, we find $f = g$. This proves that $\iota: H_k\to J^{[k]}_mN$ is injective.    

To show that $\iota^{-1}: \iota(H_k)\to H_k$ is a continuous map, we consider a sequence $s_i \to s$ in $\iota(H_k)$. It suffices to show $\iota^{-1}(s_i)\to \iota^{-1}s$. Since $\iota$ is injective, we only need to show that a subsequence of $\iota^{-1}(s_i)$ converges. This is a direct consequence of proposition \ref{prop-9}. Therefore $\iota$ is an embedding.

Now assume $k$ is a positive integer and $\iota$ is a homeomorphism. Let $f, g\in H_k$. By lemma \ref{lm34}, \begin{equation}\label{6}
d(f(x), g(x)) \leq Ar^k \end{equation} for all $r\geq 1$. Here $A = \max\limits_{r\leq 1}d(f(x), g(x))$. Consider a sequence of holomorphic maps $f_i\in H_{k}$ converging to $f$ uniformly in each compact set of $M$, then $$A_i = \max\limits_{r\leq 1}d(f_i(x), f(x))\to 0$$ as $i\to\infty$. After taking a subsequence, the limit of  the ``difference" between $f_i$ and $f$, with a suitable normalization, will be a holomorphic section $\Gamma\in f^*{T^{1,0}N}$. By (\ref{6}), $\Gamma\in \mathcal{O}_k(M, f^*{T^{1, 0}N})$. Since $N$ has nonpositive sectional curvature, $f^*{T^{1,0}N}$ will be nonpositive in the sense of Griffith.  By theorem \ref{thm5}, 
\begin{equation}\label{47}
dim(\mathcal{O}_k(M, f^*{T^{1, 0}N}))\leq dim(\mathcal{O}_k(\mathbb{C}^m, E)) = dim(J^{k}_mN).
\end{equation}
Here $E$ is the trivial flat bundle on $\mathbb{C}^m$ with rank $n$.

Let $f_0$ be a constant map, say $f_0(M) = q\in N$. Since $\iota$ is a homeomorphism, by looking at sequences of functions $f_i\to f_0$, we can show that (\ref{47}) is an equality (The $k$-jet image of $\Gamma$ at $o$ can be identified with the tangent space of $J^{k}_mN$ at $\iota(f_0)$). The rigidity of theorem \ref{thm5} says $M$ is isometric and biholomorphic to $\mathbb{C}^m$.

Next we show $N$ is flat. Consider $q\in N$, $X\in T_qN$.  Choose a holomorphic coordinate $V(w_1, w_2, \cdot\cdot\cdot, w_n)$ around $q$ such that $\frac{\partial}{\partial w_1} = X - \sqrt{-1}JX$. Let $o$ be the origin of $M = \mathbb{C}^m$. By the assumption that $\iota$ is a homeomorphism, we can find $f\in H_k$ so that $$f(z_1, z_2, \cdot\cdot\cdot, z_m) = (z_1^k+O(r^{k+1}), O(r^{k+1}), \cdot\cdot\cdot, O(r^{k+1})) \in V.$$ Recall $M(f, r) = \max d(f(x), f(o))$ for $x\in B(o, r)$.  Arguments similar to corollary \ref{cor1} and corollary \ref{cor2} imply that $\frac{M(f, r)}{r^k}$ is constant.  By checking the equality case in lemma \ref{lm-7}, we find $R^N_{XJXJXX} = 0$. This means $N$ is flat. 
Since $N$ is simply connected, $N$ is isometric and biholomorphic to $\mathbb{C}^n$. 
The proof of theorem \ref{thm6} is complete.

\end{proof}

\section{\bf{Hadamard Three Circle Theorem, General Case}}
We introduce the three circle theorem when the holomorphic sectional curvature has a lower bound.
\begin{theorem}\label{thm8}
Let $M^n$ be a complete noncompact K\"ahler manifold, $p\in M$. Let $r(x) = dist(p, x)$, $e_1 = \frac{1}{\sqrt{2}}(\nabla r - \sqrt{-1}J\nabla r)$.  Suppose there exists a continuous function  $g(r)$ such that the holomorphic sectional curvature satisfies $R_{1\overline{1}1\overline{1}}\geq g(r)$ for all $r\geq 0$. 
Let $u(r)\in C^1(0, \infty)$ satisfy $$u' + 2u^2 + \frac{g}{2}\geq 0$$ and $\lim\limits_{r\to 0^+}2u(r)r=1$.
Suppose $h(r)\in C^1(0, \infty)$ satisfies \begin{itemize} \item $\lim\limits_{r\to 0^+}\frac{e^{h(r)}}{r} = 1$; \item $h'(r) > 0$; \item
$\frac{1}{2}h''+h'u(r)\leq 0$. \end{itemize}Then for any $f\in \mathcal{O}(M)$, $\log M_f(r)$ is convex in terms of $h(r)$. In particular, if $f\in  \mathcal{O}(M)$ vanishes at $p$ with order $d$, $\frac{M_f(r)}{e^{dh(r)}}$ is nondecreasing.
\end{theorem}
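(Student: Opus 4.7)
The proof would follow the template of Theorem \ref{thm-10}, with $u(r)$ and $h(r)$ playing the roles of $1/(2r)$ and $\log r$ from the nonnegative-curvature case.

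First I would establish a generalized Hessian comparison $r_{1\bar 1}(r) \leq u(r)$ along a minimizing geodesic from $p$, away from the cut locus. The Bochner identity applied to $|\nabla r|^2$, exactly as in Theorem \ref{thm3}, yields the Riccati-type inequality
\[
\frac{\partial r_{1\bar 1}}{\partial r} + 2 r_{1\bar 1}^2 + \tfrac{1}{2} R_{1\bar 1 1\bar 1} \leq 0;
\]
combined with $R_{1\bar 1 1\bar 1} \geq g(r)$, this gives $r_{1\bar 1}' + 2 r_{1\bar 1}^2 + g/2 \leq 0$. Subtracting from the reverse inequality satisfied by $u$, the difference $w = u - r_{1\bar 1}$ satisfies $w' + 2(u + r_{1\bar 1}) w \geq 0$, and since $2 r r_{1\bar 1}, 2 r u \to 1$ as $r \to 0^+$ (the first from the flat model at the center), Gronwall gives $w \geq 0$. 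Using $|r_1|^2 = 1/2$, we then compute
\[
[h(r(x))]_{1\bar 1} = h'(r) r_{1\bar 1} + \tfrac{1}{2} h''(r) \leq h'(r) u(r) + \tfrac{1}{2} h''(r) \leq 0,
\]
invoking $h' > 0$ and the ODE hypothesis on $h$.

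Next I would set up the three circle argument. Fix $0 < r_1 < r_3$ and assume (WLOG) $M_f(r_1) < M_f(r_3)$; otherwise $f$ is constant by the maximum modulus principle. Define
\[
G(x) = (h(r_3) - h(r_1)) \log |f(x)|,
\]
\[
F(x) = (h(r_3) - h(r(x))) \log M_f(r_1) + (h(r(x)) - h(r_1)) \log M_f(r_3).
\]
Then $F \geq G$ on $\partial B(p, r_1) \cup \partial B(p, r_3)$, $F$ is an affine function of $h(r(x))$ with positive slope so $F_{1\bar 1} \leq 0$, and $G_{1\bar 1} = 0$ wherever $f \neq 0$ by Poincar\'e--Lelong. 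The main obstacle, exactly as in Theorem \ref{thm-10}, is upgrading this to a strict version: I would perturb $F$ to a family $F_\epsilon \to F$ with $(F_\epsilon)_{1\bar 1} < 0$ strictly, so that the maximum principle applied in the $e_1 e_{\bar 1}$ direction rules out any positive interior maximum of $G - F_\epsilon$. In the special case this was achieved by the substitution $r \mapsto r - \epsilon$ inside $\log$, exploiting the explicit bound $r_{1\bar 1} \leq 1/(2r)$; here, since $u$ is not assumed monotone, that shift does not automatically give a strict Hessian bound, so I would instead approximate the ODE data themselves, choosing $u_\epsilon = u + \epsilon \psi$ with $\psi > 0$ vanishing like $r^a$ at $0$ (preserving the initial asymptotic while making $u_\epsilon' + 2 u_\epsilon^2 + g/2 > 0$ strictly) and constructing $h_\epsilon$ by integrating a strict version $\tfrac{1}{2} h_\epsilon'' + h_\epsilon' u_\epsilon < 0$. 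Combined with Calabi's base-point-shifting trick (verbatim from the end of the proof of Theorem \ref{thm-10}) to circumvent the cut locus, letting $\epsilon \to 0$ yields the desired convexity inequality. This approximation is the only nontrivial step; the rest is a line-by-line adaptation of the special case.

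Finally, the monotonicity of $M_f(r)/e^{d h(r)}$ for $f$ vanishing at $p$ with order $d$ follows from the convexity by taking $r_1 \to 0^+$ in the three circle inequality, mimicking the deduction of Corollary \ref{cor2}: since $e^{h(r)}/r \to 1$ gives $\log M_f(r_1) = d \log r_1 + O(1) = d h(r_1) + O(1)$, dividing through by $h(r_3) - h(r_1) \to +\infty$ and sending $r_1 \to 0^+$ collapses the inequality to $\log M_f(r_2) - d h(r_2) \leq \log M_f(r_3) - d h(r_3)$, which is the claimed monotonicity.
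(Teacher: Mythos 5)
Your proposal is correct and follows the same skeleton as the paper's proof: Riccati comparison giving $r_{1\bar 1}\leq u(r)$, hence $[h(r)]_{1\bar 1}=\tfrac12 h''+h'r_{1\bar 1}\leq \tfrac12 h''+h'u\leq 0$, then the maximum principle in the $e_1\bar e_1$ direction with Poincar\'e--Lelong and Calabi's trick. The one place where you genuinely diverge is the perturbation needed to make the comparison function's Hessian \emph{strictly} negative. You correctly observe that the shift $r\mapsto r-\epsilon$ from Theorem \ref{thm-10} does not carry over verbatim, and you propose perturbing the ODE data ($u_\epsilon=u+\epsilon\psi$, then re-solving for $h_\epsilon$). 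The paper instead uses the substitution $h_{\epsilon_1}=\log(e^{h}-\epsilon_1)$, which \emph{is} the correct generalization of the shift (in the model case $e^{h}=r$ it reduces to $\log(r-\epsilon_1)$): writing $\phi=e^{h}$, one has $(h_{\epsilon_1})_{1\bar1}=\frac{\phi_{1\bar1}}{\phi-\epsilon_1}-\frac{|\phi_1|^2}{(\phi-\epsilon_1)^2}$, and $h_{1\bar1}\leq 0$ gives $\phi_{1\bar1}\leq |\phi_1|^2/\phi$, whence $(h_{\epsilon_1})_{1\bar1}<0$ since $\phi_1\neq 0$ (using $h'>0$ and $|r_1|=1/\sqrt2$). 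This buys strictness with no new ODE to solve and no convergence argument for a family $h_\epsilon\to h$; your route should also work but carries the extra burden of verifying that $h_\epsilon$ exists with the right asymptotics at $r=0$ and converges to $h$. Your deduction of the monotonicity of $M_f(r)/e^{dh(r)}$ from convexity by sending $r_1\to 0^+$ is fine and matches what the paper does implicitly (the paper runs the maximum principle directly on $\log|f|-(d-\epsilon)(h_{\epsilon_1}(r)-h_{\epsilon_1}(r_2))$, which amounts to the same thing).
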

\begin{remark}
When $g(r) = 0$, we can take $u(r) = \frac{1}{2r}, h(r) = \log r$. This recovers the three circle theorem when the holomorphic sectional curvature is nonnegative.
It is also possible to generalize theorem \ref{thm8} to Hermitian holomorphic vector bundle cases. \end{remark}
\begin{remark}
Theorem \ref{thm8} is sharp in general. One can consider the unitary symmetric metric on $\mathbb{C}^n$ or the unit ball in $\mathbb{C}^n$ with the prescribed holomorphic sectional curvature in the radial direction.  Then any homogeneous polynomial holomorphic function satisfies the equality.
\end{remark}

\begin{proof}
The argument is similar to theorem \ref{thm-10}.
We only prove the monotonicity formula here.  According to the assumption, $r_{1\overline 1}\leq u(r)$.
 Let $0 < r_1 \leq r_2$. For small $\epsilon>0$, define $$F_{\epsilon}(x) = \log |f(x)| - \log M_f(r_2)-(d-\epsilon)(h(r)-h(r_2)).$$ Then $F_{\epsilon}(x) \leq 0$ for $r(x) = r_2$ or $r<\delta$ where $\delta<r_1$ is very small. Also define the perturbation function $$h_{\epsilon_1} = \log(e^h-\epsilon_1).$$ Then $$(h_{\epsilon_1})_{1\overline1}<0.$$ Write $$F_{\epsilon, \epsilon_1}(x) = \log |f(x)| - \log M_f(r_2)-(d-\epsilon)(h_{\epsilon_1}(r)-h_{\epsilon_1}(r_2)).$$ Here $\epsilon_1<< \delta$ so that $F_{\epsilon, \epsilon_1}(x)\leq 0$ for $r=\delta$.  We apply the maximum principle for $F_{\epsilon, \epsilon_1}$ on $B(p, r_2)-B(p, \delta)$. If the maximum point $q_{\epsilon_1}$ is in the interior part and not on the cut locus of $p$, $(F_{\epsilon, \epsilon_1})_{1\overline1}(q_{\epsilon_1})\leq0$. By direct computation, $(F_{\epsilon, \epsilon_1})_{1\overline1}>0$. This is a contradiction. If $q_{\epsilon_1}$ is on the cut locus of $p$, we employ Calabi's trick. Namely, define $\hat r=dist(p_1, x)$ where $p_1$ is the point on the minimal geodesic connecting $p$ and $q_{\epsilon_1}$ such that $dist(p, p_1)=\epsilon_2$.  Define $$F_{\epsilon, \epsilon_1, \epsilon_2}(x) = \log |f(x)| - \log M_f(r_2)-(d-\epsilon)(h_{\epsilon_1}(\hat r+\epsilon_2)-h_{\epsilon_1}(r_2))\leq F_{\epsilon, \epsilon}(x)$$ on $B(p, r_2)-B(p, \delta)$. Here we used the triangle inequality and that $h' > 0$. Moreover, $F_{\epsilon, \epsilon_1, \epsilon_2}(q_{\epsilon_1})= F_{\epsilon, \epsilon_1}(q_{\epsilon_1})$. Therefore $q_{\epsilon_1}$ is the maximum of $F_{\epsilon, \epsilon_1,\epsilon_2}$. Thus $$(F_{\epsilon, \epsilon_1,\epsilon_2})_{1\overline1}\leq0.$$ By continuity of the holomorphic sectional curvature, for fixed $\epsilon_1>0$, if $\epsilon_2$ is small enough, $$(F_{\epsilon, \epsilon_1,\epsilon_2})_{1\overline1}>0$$ at $q_{\epsilon_1}$. This is a contradiction.
  Then $$F_{\epsilon, \epsilon_1, \epsilon_2}(x)\leq 0$$ for $\delta\leq r\leq r_2$. Letting $\epsilon_2\to 0, \epsilon_1\to 0, \epsilon\to 0$ and putting $r(x) = r_1$, we find $$\log M_f(r_1) -\log M_f(r_2) - d(h(r_1)-h(r_2))\leq 0.$$ This is equivalent to $$\frac{M_f(r_1)}{e^{h(r_1)d}}\leq \frac{M_f(r_2)}{e^{h(r_2)d}}.$$
 \end{proof}

Below are two corollaries of theorem \ref{thm8}:
 \begin{theorem}
 Let $M^n$ be a complete noncompact K\"ahler manifold such that the holomorphic sectional curvature is no less than $-1$. Let $p\in M$ and $r(x) = dist(p, x)$. Then for $f\in\mathcal{O}(M)$,  $M_f(r)$ is convex in terms of $\log \frac{e^r-1}{e^r+1}$. In particular, $\frac{M_f(r)}{(\frac{e^r-1}{e^r+1})^d}$ is nondecreasing where $d$ is the vanishing order of $f$ at $p$.
 \end{theorem}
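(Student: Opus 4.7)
The plan is to derive this corollary as a direct application of Theorem \ref{thm8} by exhibiting explicit choices of $u(r)$ and $h(r)$ that verify the hypotheses in the case $g(r)\equiv -1$. The model to keep in mind is the complex hyperbolic space of constant holomorphic sectional curvature $-1$, whose radial Hessian data will dictate the correct formulas.

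First I would choose the Hessian comparison function. Seeking $u\in C^1(0,\infty)$ with $\lim_{r\to 0^+}2ru(r)=1$ and $u'+2u^2-\tfrac12\ge 0$, I would try $u(r)=\tfrac12\coth r$. Then $2ru(r)=r\coth r\to 1$ as $r\to 0^+$, and
\begin{equation*}
u'(r)+2u(r)^2=-\tfrac12\operatorname{csch}^2 r+\tfrac12\coth^2 r=\tfrac12,
\end{equation*}
so the inequality in Theorem \ref{thm8} holds with equality.

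Next I would select $h(r)$ satisfying $h'>0$, $\tfrac12 h''+h'u\le 0$ and $\lim_{r\to 0^+}e^{h(r)}/r=1$. Motivated by the target statement, I would set
\begin{equation*}
h(r)=\log\bigl(2\tanh(r/2)\bigr)=\log 2+\log\frac{e^r-1}{e^r+1}.
\end{equation*}
A direct computation gives $h'(r)=1/\sinh r>0$ and $h''(r)=-\coth r\,\operatorname{csch} r$, so that
\begin{equation*}
\tfrac12 h''(r)+h'(r)u(r)=-\tfrac12\coth r\,\operatorname{csch} r+\operatorname{csch} r\cdot\tfrac12\coth r=0,
\end{equation*}
which meets the required inequality with equality. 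Moreover $e^{h(r)}=2\tanh(r/2)\sim r$ as $r\to 0^+$, so the initial normalization is satisfied.

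With these choices Theorem \ref{thm8} applies and yields both the convexity of $\log M_f(r)$ as a function of $h(r)$ and the monotonicity of $M_f(r)/e^{dh(r)}$ for $d$ equal to the vanishing order of $f$ at $p$. Since $h(r)$ and $\log\frac{e^r-1}{e^r+1}$ differ by the additive constant $\log 2$, convexity of $\log M_f$ in one is equivalent to convexity in the other; likewise $e^{dh(r)}=2^d\bigl(\frac{e^r-1}{e^r+1}\bigr)^d$, so the monotonic quotient only picks up the positive constant factor $2^{-d}$. This recovers the stated conclusions. There is no real obstacle here; the only nontrivial point is guessing the model functions $\tfrac12\coth r$ and $2\tanh(r/2)$, which arise naturally from the space form of constant holomorphic sectional curvature $-1$.
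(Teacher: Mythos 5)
Your proposal is correct and is exactly the intended argument: the paper states this result as a corollary of Theorem \ref{thm8} without writing out the verification, and your choices $u(r)=\tfrac12\coth r$, $h(r)=\log\bigl(2\tanh(r/2)\bigr)=\log 2+\log\frac{e^r-1}{e^r+1}$ satisfy all the hypotheses (with equality, as befits the constant-curvature model), and the additive constant $\log 2$ is harmless for both convexity and monotonicity. Nothing further is needed.
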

 \begin{theorem}
 Let $M$ be a complete (compact) K\"ahler manifold with holomorphic sectional curvature no less than $1$, $p\in M$. Let $f$ be a holomorphic function defined on $B(p, r_0) \subset M$. Then $M_f(r)$ is convex in terms of $\log(\tan\frac{r}{2})$. In particular, if $f$ vanishes at $p$ with order $d$, $\frac{M_f(r)}{(\tan\frac{r}{2})^d}$ is nondecreasing. 
 \end{theorem}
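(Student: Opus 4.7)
The plan is to derive this as a direct specialization of Theorem \ref{thm8} with the constant lower bound $g(r)\equiv 1$. All that is needed is to solve two ODEs with the prescribed initial/normalization conditions and to check that the resulting functions satisfy all hypotheses of Theorem \ref{thm8}; no new geometric input is required.

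First I would produce the function $u(r)$. Taking equality in the Riccati inequality $u'+2u^{2}+\tfrac{g}{2}\geq 0$ with $g\equiv 1$, the substitution $v=2u$ converts this into $v'+v^{2}+1=0$. Since $(\cot r)'=-1-\cot^{2}r$ and $r\cot r\to 1$ as $r\to 0^{+}$, the solution matching $\lim_{r\to 0^{+}}2u(r)r=1$ is
\[
u(r)=\tfrac{1}{2}\cot r,\qquad r\in(0,\pi).
\]

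Next I would solve for $h(r)$. Setting equality in $\tfrac{1}{2}h''+h'u\leq 0$ gives $(h')'=-h'\cot r$, so $h'(r)=C/\sin r$ for a constant $C>0$; antidifferentiating yields $h(r)=C\log\tan(r/2)+C_{1}$. Using $\tan(r/2)\sim r/2$ near $0$, the normalization $e^{h(r)}/r\to 1$ forces $C=1$ and $e^{C_{1}}=2$, so
\[
h(r)=\log\!\bigl(2\tan\tfrac{r}{2}\bigr).
\]
Verification of the hypotheses of Theorem \ref{thm8} is then mechanical: $h'(r)=1/\sin r>0$ on $(0,\pi)$, the equation $\tfrac{1}{2}h''+h'u=0$ holds by construction, and the two limits as $r\to 0^{+}$ are built into the choices above.

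Finally, Theorem \ref{thm8} immediately gives that $\log M_{f}(r)$ is convex as a function of $h(r)$. Since $h(r)=\log 2+\log\tan(r/2)$ is an affine function of $\log\tan(r/2)$, convexity in $h$ is the same as convexity in $\log\tan(r/2)$. For the monotonicity statement, if $f$ vanishes at $p$ to order $d$ then Theorem \ref{thm8} asserts that $M_{f}(r)/e^{dh(r)}=2^{-d}\,M_{f}(r)/(\tan(r/2))^{d}$ is nondecreasing, and dividing by the constant $2^{-d}$ yields the claim.

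There is really no serious obstacle here; the only thing to watch is the constant of integration in $h$ (missing the factor of $2$ would still give the correct monotonicity statement up to a multiplicative constant, which is harmless, but it is worth noting explicitly since the sharp model $\mathbb{CP}^{n}$ with holomorphic sectional curvature $\equiv 1$ has diameter $\pi$, so the domain $r\in(0,\pi)$ of our $u$ and $h$ is exactly the natural one). Sharpness is witnessed on this model by taking $f$ to be a homogeneous polynomial pulled back from an affine chart, as mentioned in the remark after Theorem \ref{thm8}.
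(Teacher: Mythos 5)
Your proposal is correct and is exactly the intended derivation: the paper states this result as an immediate corollary of Theorem \ref{thm8}, and the verification amounts to checking that $u(r)=\tfrac12\cot r$ and $h(r)=\log\bigl(2\tan\tfrac r2\bigr)$ satisfy the hypotheses with $g\equiv 1$, which you do correctly (including the normalization $e^{h(r)}/r\to 1$ and the observation that the additive constant $\log 2$ is immaterial for both the convexity and the monotonicity statements).
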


\section{\bf{Complete K\"ahler manifolds with holomorphic sectional curvature asymptotically nonnegative }}
In this section we apply theorem \ref{thm8} to K\"ahler manifolds with holomorphic sectional curvature asymptotically nonnegative.
\begin{theorem}\label{thm7}
Let $M^n$ be a complete noncompact K\"ahler manifold, $p\in M$, $r(x)=dist(p, x)$. Suppose there exist constants $\epsilon, A >0$ such that for any $e_i\in T_x^{1,0}M$ with unit length, $R_{i\overline{i}i\overline{i}} \geq -\frac{A}{(r+1)^{2+\epsilon}}$. Then there exists $C = C(A, \epsilon)>0$ such that for any positive integer $d$, $$dim(\mathcal{O}_d(M))\leq Cd^{n}.$$  Thus the power of $d$ is sharp comparing with the complex Euclidean space. If $d\leq e^{\frac{-3A}{\epsilon}}$, $dim(\mathcal{O}_d(M)) = 1$. Finally, if $d$ is a positive integer satisfying $\frac{A}{\epsilon}\leq \frac{1}{4d}$, then $$dim(\mathcal{O}_d(M))\leq dim(\mathcal{O}_d(\mathbb{C}^n)).$$  \end{theorem}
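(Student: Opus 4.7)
The plan is to apply Theorem~\ref{thm8} with $g(r) = -A/(r+1)^{2+\epsilon}$, and to convert the resulting monotonicity of $M_f(r)/e^{kh(r)}$ into an upper bound on the maximal vanishing order at $p$ of a nonzero $f \in \mathcal{O}_d(M)$; a standard jet-counting argument then yields all three dimension bounds. Via the substitution $u = \phi'/(2\phi)$, the differential inequality on $u$ becomes $\phi'' \geq A(r+1)^{-2-\epsilon}\phi$ with $\phi(0) = 0$ and $\phi'(0) = 1$, and I take $\phi$ to be the solution of the associated linear ODE with equality. Setting
\begin{equation*}
h(r) := \log r + \int_0^r \Bigl(\tfrac{1}{\phi(s)} - \tfrac{1}{s}\Bigr) ds
\end{equation*}
(the integrand is $O(s)$ near $0$ since $\phi(s) = s + O(s^3)$), one checks $u' + 2u^2 + g/2 \equiv 0$, $\lim_{r\to 0^+} 2ru = \lim_{r\to 0^+} e^{h(r)}/r = 1$, $h'\phi \equiv 1$, and $h''/2 + h'u \equiv 0$, so all the hypotheses of Theorem~\ref{thm8} are met.

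Since $\phi$ is convex with $\phi'(0) = 1$, we have $\phi(s) \leq s\phi'(s)$, and integrating the ODE yields
\begin{equation*}
\phi'(r) \leq 1 + A\!\int_0^r s(s+1)^{-2-\epsilon}\phi'(s)\,ds,
\end{equation*}
to which Gronwall applies and gives $\phi'(r) \leq L$, where
\begin{equation*}
L := \exp\!\Bigl(A\!\int_0^\infty \tfrac{s\,ds}{(s+1)^{2+\epsilon}}\Bigr) = \exp\!\Bigl(\tfrac{A}{\epsilon(1+\epsilon)}\Bigr) \leq e^{A/\epsilon}.
\end{equation*}
Consequently $\phi(r) \leq Lr$ for all $r$, and a direct computation shows $h(r) = L^{-1}\log r + O(1)$ as $r \to \infty$. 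Now suppose $0 \neq f \in \mathcal{O}_d(M)$ vanishes at $p$ to order $k$. By Theorem~\ref{thm8} the ratio $M_f(r)/e^{kh(r)}$ is nondecreasing; combined with the polynomial-growth bound $M_f(r) \leq C(1+r)^d$ this gives
\begin{equation*}
M_f(r_1) e^{-kh(r_1)} \leq \liminf_{r\to\infty} C(1+r)^d e^{-kh(r)},
\end{equation*}
which vanishes whenever $k > dL$, forcing $f \equiv 0$. Hence the maximal vanishing order at $p$ of any nonzero element of $\mathcal{O}_d(M)$ is $\leq \lfloor dL \rfloor$, and the standard jet count produces $\dim \mathcal{O}_d(M) \leq \binom{n + \lfloor dL\rfloor}{n} \leq C(A,\epsilon,n)\,d^n$, giving the sharp power $d^n$.

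For the two refined statements: if $d \leq e^{-3A/\epsilon}$, then $dL \leq de^{A/\epsilon} \leq e^{-2A/\epsilon} < 1$, ruling out even $k = 1$, so the evaluation-at-$p$ map is injective on $\mathcal{O}_d(M)$ and $\dim \mathcal{O}_d(M) = 1$. If instead $A/\epsilon \leq 1/(4d)$, then $A/(\epsilon(1+\epsilon)) \leq 1/(4d)$ and hence $L \leq e^{1/(4d)} < 1 + 1/(2d)$ (using $e^x < 1 + 2x$ for $0 < x \leq 1/4$), so $dL < d + 1$, $\lfloor dL \rfloor \leq d$, and $\dim \mathcal{O}_d(M) \leq \binom{n+d}{n} = \dim \mathcal{O}_d(\mathbb{C}^n)$. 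The main technical obstacle is the Gronwall estimate producing the explicit constant $L$ with sharp dependence on $A/\epsilon$; once that is in place, the three thresholds $dL < 1$, $dL < d+1$, and $dL < \infty$ correspond directly to the three statements of the theorem.
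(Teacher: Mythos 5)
Your proposal is correct and follows the same overall route as the paper: feed a curvature lower bound $g(r)=-A(r+1)^{-2-\epsilon}$ into Theorem~\ref{thm8}, extract a lower bound $h(r)\geq L^{-1}\log r+O(1)$, deduce that a nonzero $f\in\mathcal{O}_d(M)$ can vanish at $p$ to order at most $dL$, and finish with the jet/linear-algebra count; the three thresholds then fall out exactly as in the paper. The only genuine difference is how the comparison pair $(u,h)$ is produced. The paper (after reducing to $\epsilon<1/2$) guesses $u=\tfrac{1}{2r}+\tfrac{A}{(1+r)^{1+\epsilon}}$, checks the Riccati inequality by hand, and integrates $\tfrac12 h''+h'u=0$ to get $h'(r)=r^{-1}e^{2A/(\epsilon(r+1)^{\epsilon})-2A/\epsilon}$, hence the constant $L_{\mathrm{paper}}=e^{2A/\epsilon}$. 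You instead linearize via $u=\phi'/(2\phi)$, solve the Jacobi-type equation with equality, and run Gronwall to get $L=e^{A/(\epsilon(1+\epsilon))}\leq e^{A/\epsilon}$, which is sharper and gives more room in the two refined statements (e.g.\ $dL\leq e^{-2A/\epsilon}<1$ versus the paper's $de^{2A/\epsilon}\leq e^{-A/\epsilon}<1$), and it avoids the harmless reduction to $\epsilon<1/2$. One cosmetic point: from $\phi\leq Lr$ you only get the one-sided bound $h(r)\geq L^{-1}\log r+O(1)$ rather than the asserted equality (the true asymptotic slope is $1/\lim_{r\to\infty}\phi'(r)$), but the one-sided bound is precisely what your argument uses, so nothing is affected.
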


\begin{remark}
Theorem \ref{thm7} is not true for $\epsilon \leq 0$. In this case, it is possible that the dimension of bounded holomorphic functions is infinite. For example,  on page $226$ of \cite{[SY]}, it was remarked that on the unit disk, the metric $ds^2 = \frac{1}{(1-|z|^2)^m}dz\otimes d\overline{z}(m\geq 3)$ has quadratic holomorphic sectional curvature decay.\end{remark}
\begin{remark}
This theorem says the sharp dimension estimate is ``stable" under the variation of curvature (the topology might change). Also note that so far, there is no Riemannian analogue of    
theorem \ref{thm7}. There, some extra topological conditions are required. For example, see \cite{[W]}\cite{[T]}.
\end{remark}

\begin{proof}
We may assume $\epsilon<\frac{1}{2}$. 
Take $u(r) = \frac{1}{2r} + \frac{A}{(1+r)^{1+\epsilon}}$, then $$2u^2 + u' - \frac{1}{2}\frac{A}{(1+r)^{2+\epsilon}}\leq 0.$$
Let $h(r)$ be the solution to the equation $\frac{1}{2}h''+h'u= 0$ with $\lim\limits_{r\to 0^+}\frac{e^{h(r)}}{r} = 1$. Then $$h'(r) = \frac{e^{\frac{2A}{\epsilon(r+1)^{\epsilon}}}}{r}e^{-\frac{2A}{\epsilon}}.$$ Therefore, \begin{equation}\begin{aligned}
h(r) = \int\limits^r_{1}\frac{e^{\frac{2A}{\epsilon(t+1)^{\epsilon}}}}{t}e^{-\frac{2A}{\epsilon}}dt+C\geq e^{-\frac{2A}{\epsilon}}\ln r+C\end{aligned}\end{equation} for $r\geq 1$. By theorem \ref{thm8}, if $f\in \mathcal{O}(M)$ and $f$ vanishes at $p$ with order $d$, $\frac{M_f(r)}{e^{h(r)d}}$ is nondecreasing. Thus $$M_f(R) \geq e^{h(R)d}\lim\limits_{r\to 0}\frac{M_f(r)}{e^{h(r)d}} \geq C_1e^{Cd}R^{de^{-\frac{2A}{\epsilon}}}.$$ Here $C_1 = \lim\limits_{r\to 0}\frac{M_f(r)}{e^{h(r)d}}>0$. By a linear algebra argument, $$dim(\mathcal{O}_d(M)) \leq dim(\mathcal{O}_{de^{\frac{2A}{\epsilon}}}(\mathbb{C}^n))=C(A, \epsilon)d^n.$$
If $d\leq e^{\frac{-3A}{\epsilon}}$,  $$dim(\mathcal{O}_d(M)) \leq dim(\mathcal{O}_{de^{\frac{2A}{\epsilon}}}(\mathbb{C}^n))\leq dim(\mathcal{O}_{e^{\frac{-A}{\epsilon}}}(\mathbb{C}^n)) = 1.$$
Finally, if  $\frac{A}{\epsilon}\leq \frac{1}{4d}\leq \frac{1}{4}$, $e^{\frac{2A}{\epsilon}}d< d+1$. Therefore $$dim(\mathcal{O}_d(M)) \leq dim(\mathcal{O}_{de^{\frac{2A}{\epsilon}}}(\mathbb{C}^n))= dim(\mathcal{O}_d(\mathbb{C}^n)).$$\end{proof}
A similar argument yields
\begin{cor}\label{cor4}
Let $M^n$ be a complete noncompact K\"ahler manifold such that the holomorphic sectional curvature is nonnegative outside a compact set $K$. Let $-a$ $(a > 0)$ be the lower bound of the holomorphic sectional curvature on $M$. Define $\lambda = a(diam(K))^2$. Then there exists a positive constant $C$ depending only on $\lambda$ such that for any positive integer $d$, $$dim(\mathcal{O}_d(M))\leq Cd^{n}.$$ Thus the power is sharp.  If $\lambda \leq \frac{1}{2d}$ for some positive integer $d$, we have the sharp dimension estimate $$dim(\mathcal{O}_d(M))\leq dim(\mathcal{O}_d(\mathbb{C}^n)).$$ Finally, there exists a small number $\epsilon = \epsilon(\lambda)>0$ such that $$dim(\mathcal{O}_{\epsilon}(M)) = 1.$$
\end{cor}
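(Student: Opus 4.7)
The plan is to mirror the proof of Theorem \ref{thm7}, the only twist being that the curvature lower bound is piecewise rather than decaying: the unfavorable bound $-a$ is used only on $B(p,D)$ and the favorable bound $0$ kicks in afterwards. First I would choose the reference point $p$ to lie inside $K$. Since $\mathrm{diam}(K)=D$, this ensures that any $x$ with $r(x)>D$ satisfies $x\notin K$, hence $R_{1\bar1 1\bar1}(x)\geq 0$. Thus we may apply Theorem \ref{thm8} with a curvature lower bound $g(r)$ that equals $-a$ on $[0,D]$ and $0$ on $(D,\infty)$; strictly speaking $g$ should be continuous, which I would arrange by mollification on a shrinking interval around $r=D$ and then pass to the limit. (Independence of $\mathcal{O}_d(M)$ from the choice of base point is standard, so placing $p$ in $K$ costs nothing.)

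The main computational step is to construct the companion functions $u,h$. Guided by the case of the hyperbolic space of constant holomorphic sectional curvature $-a$ on the interior and Euclidean space on the exterior, I would define
\begin{equation*}
u(r)=\begin{cases}\dfrac{\sqrt{a}}{2}\coth(\sqrt{a}\,r),& 0<r\leq D,\\[1mm] \dfrac{1}{2r+C_0},& r> D,\end{cases}
\qquad C_0:=\frac{2\tanh(\sqrt{a}\,D)}{\sqrt{a}}-2D,
\end{equation*}
with $C_0$ chosen so that $u$ is continuous at $r=D$. A direct check gives $u'+2u^2+g/2=0$ on both pieces and $2u(r)r\to 1$ as $r\to 0^+$, so $u$ is admissible in Theorem \ref{thm8} (the mild failure of $C^1$ at $r=D$ is absorbed by the same mollification). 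Solving $\tfrac12 h''+h'u=0$ with $e^{h(r)}/r\to 1$ gives
\begin{equation*}
h(r)=\log\frac{2\tanh(\sqrt{a}\,r/2)}{\sqrt{a}}\quad(r\leq D),\qquad h(r)=h(D)+\frac{1}{\cosh(\sqrt{\lambda})}\log\frac{2r+C_0}{2D+C_0}\quad(r>D),
\end{equation*}
where $\lambda=aD^2$. The key feature is the exterior exponent: $h(r)=\cosh(\sqrt{\lambda})^{-1}\log r+O(1)$ as $r\to\infty$.

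Now I would apply Theorem \ref{thm8}: if $f\in\mathcal{O}(M)$ vanishes at $p$ to order $d'$, then $M_f(r)/e^{d'h(r)}$ is nondecreasing, so for large $r$,
\begin{equation*}
M_f(r)\;\geq\; C_1\, r^{d'/\cosh(\sqrt{\lambda})}.
\end{equation*}
If in addition $f\in\mathcal{O}_d(M)$, then $M_f(r)\leq C_2 r^d$, forcing $d'\leq d\cosh(\sqrt{\lambda})$. By the usual linear algebra argument (as in the proof of Theorem \ref{thm2}),
\begin{equation*}
\dim\mathcal{O}_d(M)\leq\dim\mathcal{O}_{d\cosh(\sqrt{\lambda})}(\mathbb{C}^n)\leq C(\lambda)\,d^n,
\end{equation*}
which is the stated polynomial bound with sharp exponent $n$. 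For the sharp comparison, note that when $\lambda\leq 1/(2d)$ one has $\cosh(\sqrt{\lambda})-1\leq \lambda \leq 1/(2d)<1/d$, so $d\cosh(\sqrt{\lambda})<d+1$ and the integer part $\lfloor d\cosh(\sqrt{\lambda})\rfloor$ equals $d$, giving $\dim\mathcal{O}_d(M)\leq\dim\mathcal{O}_d(\mathbb{C}^n)$. Finally, taking any $\epsilon<1/\cosh(\sqrt{\lambda})$ forces $d'<1$, i.e.\ $d'=0$, so $f$ is constant and $\dim\mathcal{O}_\epsilon(M)=1$.

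\textbf{Main obstacle.} The routine part is applying Theorem \ref{thm8}; the substantive point is choosing the two-piece $u$ so that on $[0,D]$ it is the sharp comparison for the hyperbolic model (where equality holds in the Riccati inequality) and on $(D,\infty)$ it transitions to the Euclidean comparison, \emph{while} the resulting $h$ carries a single integration constant that encodes the full cost of crossing $K$. That cost is exactly the factor $1/\cosh(\sqrt{\lambda})$, which I would want to make sure is sharp in the model; any cruder matching (for instance using $u(r)=\tfrac{\sqrt{a}}{2}\coth(\sqrt a r)$ globally) produces a bounded $h$ and yields no dimension estimate at all, so the matching at $r=D$ is what makes the argument work.
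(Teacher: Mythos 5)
Your proposal is correct and follows the route the paper intends (the paper only sketches this corollary as "a similar argument" to Theorem \ref{thm7}): apply Theorem \ref{thm8} with a comparison pair $(u,h)$ adapted to the lower bound $-a$ on $B(p,\operatorname{diam}(K))$ and $0$ outside, deduce $h(r)\geq c(\lambda)\log r+O(1)$, and finish with the vanishing-order/linear-algebra argument. Your exact matched solution, giving $c(\lambda)=1/\cosh(\sqrt{\lambda})$ with $\cosh(\sqrt{\lambda})\leq 1+\lambda$, in fact verifies the specific thresholds $\lambda\leq\frac{1}{2d}$ and $\epsilon(\lambda)$ claimed in the statement; the only loose ends (mollifying the discontinuous $g$ and the corner of $u$ at $r=\operatorname{diam}(K)$, and passing from vanishing order $0$ to constancy via $f-f(p)$) are routine.
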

\begin{remark}
The inequality $dim(\mathcal{O}_d(M))\leq dim(\mathcal{O}_d(\mathbb{C}^n))$ does not hold if we only assume $M$ has nonnegative holomorphic sectional curvature outside a compact set. One can easily construct an example in complex one dimensional case (the rotationally symmetric case on $\mathbb{C}$).
\end{remark}
\begin{cor}\label{cor5}
Theorem \ref{thm4} is valid under the assumption of theorem \ref{thm7}. 
\end{cor}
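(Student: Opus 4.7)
The plan is to port the argument for Theorem \ref{thm4} using the comparison function $h(r)$ constructed in the proof of Theorem \ref{thm7} in place of $\log r$. In that construction one takes $u(r)=\frac{1}{2r}+\frac{A}{(1+r)^{1+\epsilon}}$, so that the curvature hypothesis of Theorem \ref{thm7} yields the Hessian bound $r_{1\bar 1}\le u(r)$; one then defines $h$ by $\tfrac{1}{2}h''+h'u=0$ with $\lim_{r\to 0^+}e^{h(r)}/r=1$. Two features of $h$ will be used: $h'>0$, and the explicit lower bound $h(r)\ge e^{-2A/\epsilon}\log r+C$ for $r\ge 1$ established during the proof of Theorem \ref{thm7}.

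The key identity, using $r_1 r_{\bar 1}=\tfrac{1}{2}$, is
\[
h_{1\bar 1}=\tfrac{1}{2}h''+h'\,r_{1\bar 1}\;\le\;\tfrac{1}{2}h''+h'u(r)\;\le\;0,
\]
so $h$ takes over the role played by $\log r$ via Theorem \ref{thm3} in the original proof of Theorem \ref{thm4}. For the strict-inequality perturbation required at an interior maximum, I would copy the device used in Theorem \ref{thm8} and set $h_{\epsilon_1}(r):=\log(e^{h(r)}-\epsilon_1)$ for small $\epsilon_1>0$; a direct calculation gives
\[
(h_{\epsilon_1})_{1\bar 1}\;=\;\frac{e^h}{e^h-\epsilon_1}\,h_{1\bar 1}\;-\;\frac{\epsilon_1\,e^h}{(e^h-\epsilon_1)^2}\,h_1 h_{\bar 1}\;<\;0,
\]
strictly, because the second term is strictly negative while the first is $\le 0$.

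With $h_{\epsilon_1}$ in hand, mimic Theorem \ref{thm4}. Let $A_0=\sup_{B(p,1)}u$, which is finite by upper semicontinuity. For small $\eta>0$ and $0<\epsilon_1\ll 1$, define
\[
F_{\eta,\epsilon_1}(x)\;:=\;u(x)-A_0-\eta\bigl(h_{\epsilon_1}(r(x))-h_{\epsilon_1}(1)\bigr).
\]
On $\partial B(p,1)$ one has $F_{\eta,\epsilon_1}\le 0$. Since $u^+(x)=o(\log r)$ while $h_{\epsilon_1}(r)\ge c\log r+O(1)$ with $c=e^{-2A/\epsilon}>0$, we have $u^+=o(h_{\epsilon_1})$, so for each fixed $\eta>0$ there exists $R_0$ with $F_{\eta,\epsilon_1}\le 0$ on $\partial B(p,R)$ for all $R\ge R_0$. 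If $F_{\eta,\epsilon_1}$ had a positive interior maximum at a point $q$ off the cut locus of $p$, then on a small holomorphic disc $D(q,a)\subset M$ whose tangent space at $q$ is spanned by $\nabla r$ and $J\nabla r$, plurisubharmonicity of $u$ combined with $-\eta(h_{\epsilon_1})_{1\bar 1}>0$ would make $F_{\eta,\epsilon_1}|_{D(q,a)}$ strictly subharmonic near $q$, so the mean-value inequality on $D(q,a)$ (as invoked in Theorem \ref{thm4}) contradicts the maximum at $q$; cut-locus points are handled by Calabi's trick, i.e.\ replacing $r$ by $\hat r(x)=\mathrm{dist}(p_1,x)$ for $p_1$ near $p$ on the minimizing geodesic to $q$, using $h'>0$ and the triangle inequality to retain the boundary inequality. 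Letting $\epsilon_1\to 0$, $R\to\infty$, and then $\eta\to 0$ gives $u\le A_0$ on all of $M$; the maximum principle for plurisubharmonic functions then forces $u\equiv A_0$.

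The second case of Theorem \ref{thm4}, under only $u_i u_{\bar j} u_{k\bar l} g^{i\bar l} g^{k\bar j}\ge 0$, follows by the identical substitution $\log r\rightsquigarrow h(r)$ in the infimum-chasing argument of Theorem \ref{thm4} (with the comparison function $f_i(x)=b_i+\eta\,h(\mathrm{dist}(x_i,x))$ centered at a minimizing sequence $x_i$). The main technical point is to verify that $(h_{\epsilon_1})_{1\bar 1}<0$ remains strict with enough room to absorb both the interior Hessian inequality and the infinitesimal error introduced by Calabi's perturbation on the cut locus; this is exactly the difficulty already resolved in the proof of Theorem \ref{thm8}, so no genuinely new ingredient is needed beyond the substitution and the logarithmic-growth lower bound on $h$.
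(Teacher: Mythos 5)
Your proposal is correct and follows exactly the paper's route: the paper's entire proof of Corollary \ref{cor5} is the one-line instruction to replace $\log r$ by the function $h(r)$ from Theorem \ref{thm7}, and you have simply carried out that substitution in detail, verifying $h_{1\bar 1}\le 0$, the strict negativity of $(h_{\epsilon_1})_{1\bar 1}$, and the growth bound $h(r)\ge e^{-2A/\epsilon}\log r + C$ needed for the boundary comparison. No discrepancy with the paper's argument.
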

\begin{proof}
We just replace $\log r$ by $h(r)$ in the proof of theorem \ref{thm4}. Then everything works as the same.
\end{proof}

\begin{remark}
We can also generalize theorem \ref{thm5} and theorem \ref{thm6} as in corollary \ref{cor5}. 
\end{remark}

\begin{cor}\label{cor-1}
Under the same assumption as in theorem \ref{thm7}, any holomorphic map from $M$ to $N$ must be a constant map. Here $N$ is a complete simply connected K\"ahler manifold with nonpositive sectional curvature and that the sectional curvature satisfies $sec \leq -\frac{K}{d^2}$ for all large $d$. Here $d$ is the distance function to a point $q$ on $N$, $K$ is a positive constant.
\end{cor}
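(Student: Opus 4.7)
Suppose for contradiction that $f: M \to N$ is a nonconstant holomorphic map. The plan is to construct, using the strong negative curvature of $N$, a bounded plurisubharmonic function $\Phi$ on $N$ of the form $\Phi(y) = h(d_N(y, q))$; pull it back by $f$ to obtain a bounded plurisubharmonic function on $M$; and apply the Liouville theorem of Corollary \ref{cor5} to force this pullback, and hence $f$, to be constant.

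Fix $x_0 \in M$ and set $q := f(x_0) \in N$, $\rho(y) := d_N(y, q)$. Since $N$ is Cartan--Hadamard, $\rho^2$ is smooth globally and $\rho$ is smooth away from $q$. The hypothesis $\mathrm{sec}_N \leq -K/d^2$ at large distances from the base point in the statement (which, by the triangle inequality, yields $\mathrm{sec}_N \leq -K'/\rho^2$ at large $\rho$ as well, with $K' > 0$), together with $\mathrm{sec}_N \leq 0$ everywhere, gives by Jacobi-field comparison with the rotationally symmetric model $d\rho^2 + \rho^{2\alpha}\, d\sigma^2_{S^{2n-1}}$ --- where $\alpha = \tfrac{1}{2}(1 + \sqrt{1 + 4K'}) > 1$ --- the matrix lower bound
\[
\rho_{\alpha\bar\beta} \;\geq\; \frac{\alpha}{2\rho}\,\delta_{\alpha\beta} \quad \text{for } \rho \geq R_0, \qquad \rho_{\alpha\bar\beta} \;\geq\; \frac{1}{2\rho}\,\delta_{\alpha\beta} \quad \text{for } 0 < \rho \leq R_0,
\]
as Hermitian matrices in any unitary frame on $T^{1,0}N$ (the bound on the tangential real Hessian passes to the smallest eigenvalue of the complex Hessian via $\rho_{\alpha\bar\alpha} = \tfrac{1}{2}(\mathrm{Hess}_\rho(e,e) + \mathrm{Hess}_\rho(Je,Je))$, and the eigenvalue is smallest in the distance-adapted direction $e_1 = \tfrac{1}{\sqrt{2}}(\nabla\rho - \sqrt{-1}J\nabla\rho)$).

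Next I choose a $C^{1,1}$ increasing bounded function $h \colon [0, \infty) \to \mathbb{R}$ satisfying $h''(\rho) \geq 0$ on $[0, R_0]$ and $h''(\rho) + h'(\rho)\alpha/\rho \geq 0$ on $[R_0, \infty)$. An explicit choice is $h(\rho) = a\rho^2$ on $[0, R_0]$ and $h(\rho) = C_1 - C_2(1 + \rho)^{-(\alpha-1)}$ on $[R_0, \infty)$, with constants picked to make $h$ of class $C^1$ and strictly increasing; a short computation gives $h''(\rho) + h'(\rho)\alpha/\rho = C_2\alpha(\alpha-1)\rho^{-1}(1+\rho)^{-\alpha-1} \geq 0$ on the outer piece. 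Using $|\rho_\alpha|^2 = 1/2$ in any unitary frame, the identity
\[
\Phi_{\alpha\bar\beta} \;=\; h''(\rho)\,\rho_\alpha\rho_{\bar\beta} \;+\; h'(\rho)\,\rho_{\alpha\bar\beta}
\]
combined with the eigenvalue bound above yields $\Phi_{\alpha\bar\beta} \geq 0$ on $N \setminus \{q\}$ (in the worst case $v$ parallel to $\rho^\sharp$, the inequality reduces exactly to $h'' + 2h'\lambda_{\min}(\rho_{\alpha\bar\beta}) \geq 0$). Near $q$, $\Phi = a\rho^2$ is smooth with positive complex Hessian, so $\Phi$ extends to a bounded continuous plurisubharmonic function on all of $N$.

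The pullback $u := \Phi \circ f$ is then a bounded plurisubharmonic function on $M$. Since $M$ satisfies the hypothesis of Theorem \ref{thm7}, Corollary \ref{cor5} says the Liouville theorem (Theorem \ref{thm4}) holds on $M$: any plurisubharmonic function $u$ with $\overline{\lim\limits_{r\to\infty}} u^+(x)/h_M(r(x)) = 0$, where $h_M(r) \geq c\log r$ at infinity, must be constant. Boundedness of $u$ makes this hypothesis automatic, so $u \equiv \mathrm{const}$; strict monotonicity of $h$ gives $\rho(f(x)) \equiv \rho(f(x_0)) = 0$, hence $f \equiv q$, contradicting nonconstancy. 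The main obstacle is the Kähler Hessian comparison producing the coefficient $\alpha/(2\rho)$ in the smallest eigenvalue of $\rho_{\alpha\bar\beta}$ from the real bound $\mathrm{sec}_N \leq -K/\rho^2$ --- a standard Jacobi-field comparison against the model $d\rho^2 + \rho^{2\alpha}\, d\sigma^2$, but requiring the usual care in passing from the real Hessian on tangential directions to the matrix inequality on the complex Hessian; once this is in place the piecewise construction of $h$ and the Liouville step are routine.
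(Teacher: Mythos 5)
Your proof is correct and follows essentially the same route as the paper's (explicitly sketchy) argument: construct a bounded, nonconstant plurisubharmonic function of the distance to a point of $f(M)$ on $N$ via the Hessian comparison theorem, pull it back by the holomorphic map, and invoke Corollary \ref{cor5} to force it to be constant, whence $f$ collapses to the minimum point. The only quibble is quantitative: the comparison with the model $J''=K'J/\rho^2$ gives $\rho\,J'/J$ increasing to $\alpha$ from below, so one only gets $\mathrm{Hess}\,\rho\geq \alpha'/\rho$ for any fixed $1<\alpha'<\alpha$ and $\rho$ large rather than the exact constant $\alpha$ you state at $\rho\geq R_0$ --- but since your construction of $h$ only needs some exponent strictly greater than $1$, this is harmless.
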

\begin{proof}
The proof will be sketchy. Let $f$ be a holomorphic map from $M$ to $N$ and $q'\in f(M)$. From the comparison theorem, it is easy to construct a nonconstant bounded plurisubharmonic function $g (x)= \rho(d'(x))$ on $N$. Here $d'(x) = d(q', x)$. We obtain a bounded plurisubharmonic function $f^*g$ on $M$. By corollary \ref{cor5}, $f^*g$ is a constant. Since $q'\in f(M)$, $f^*g\equiv f(q')$. 
As $q'$ is the unique minimum point of $g$, $f(M) = q'$.

\end{proof}

\section{\bf{Miscellaneous Results }}

\begin{theorem}
Let $M^n$ be a complete noncompact K\"ahler manifold with nonnegative holomorphic sectional curvature. Suppose the holomorphic sectional curvature is positive at one point, then there exists $\epsilon>0$ depending only on $M$ such that for any integer $d\geq 1$, $dim(\mathcal{O}_d(M))\leq dim(\mathcal{O}_{(1-\epsilon)d}(\mathbb{C}^n))$.
\end{theorem}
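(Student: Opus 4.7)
The plan is to sharpen the Hessian comparison of Theorem~\ref{thm3} in a neighborhood of the point of positive curvature, feed it into Theorem~\ref{thm8} to get a quantitative improvement over Corollary~\ref{cor2}, and convert the resulting gap into the dimension bound via Taylor-jet linear algebra. I read ``positive at one point'' as: there exists $q_0\in M$ with $R_{v\bar vv\bar v}(q_0)>0$ for every unit $v\in T_{q_0}^{1,0}M$. By continuity of the curvature tensor and compactness of the unit complex tangent sphere, there exist $c,\rho>0$ such that $R_{v\bar vv\bar v}(q)\ge c$ for every $q\in B(q_0,\rho)$ and every unit $v\in T_q^{1,0}M$.

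Fix $p:=q_0$, let $r(x)=\mathrm{dist}(p,x)$, and pick a continuous $g(r)$ with $g\equiv c$ on $[0,\rho/2]$ and $g\equiv 0$ on $[\rho,\infty)$; then $R_{1\bar 11\bar 1}\ge g(r)$ along every geodesic from $p$. Let $u$ solve $u'+2u^2+g/2=0$ with $2u(r)r\to 1$ as $r\to 0^+$. A Riccati computation gives $u(r)=\tfrac{1}{2r}-\tfrac{cr}{6}+O(r^3)$ near $0$, and the sub-$\tfrac{1}{2r}$ behaviour propagates to $u(r)=\tfrac{1}{2(r+a)}$ with some $a=a(c,\rho)>0$ for $r\ge\rho$. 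Take $h$ satisfying $\tfrac12 h''+h'u=0$, $h'>0$, $e^h/r\to 1$; then $h'(r)=(1/r)\exp\bigl(2\int_0^r(\tfrac{1}{2s}-u(s))\,ds\bigr)>1/r$ for every $r>0$, so $h(r)>\log r$ strictly and $\alpha:=h(\rho)-h(\rho/2)-\log 2=\int_{\rho/2}^{\rho}(h'(s)-1/s)\,ds>0$ depends only on $M$. Theorem~\ref{thm8} now yields: for every nonzero $f\in\mathcal{O}(M)$ vanishing at $p$ to order $k$, $M_f(r)/e^{kh(r)}$ is nondecreasing. If additionally $f\in\mathcal{O}_d(M)$, Corollary~\ref{cor1} gives $M_f(r)/r^d$ nonincreasing. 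Combining at $r_1=\rho/2$, $r_2=\rho$ and cancelling the positive factor $M_f(\rho/2)$ in
$$M_f(\rho/2)\,e^{k(h(\rho)-h(\rho/2))}\ \le\ M_f(\rho)\ \le\ M_f(\rho/2)\cdot 2^d$$
yields $k\le d\log 2/(h(\rho)-h(\rho/2))=(1-\epsilon)d$ with $\epsilon:=\alpha/(\alpha+\log 2)>0$ independent of $f$ and $d$.

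Hence every nonzero $f\in\mathcal{O}_d(M)$ has vanishing order at $p$ at most $\lfloor(1-\epsilon)d\rfloor$, so the Taylor-coefficient map at $p$ injects $\mathcal{O}_d(M)$ into $\mathbb{C}[z_1,\dots,z_n]_{\le\lfloor(1-\epsilon)d\rfloor}$, giving $\dim\mathcal{O}_d(M)\le\binom{\lfloor(1-\epsilon)d\rfloor+n}{n}=\dim\mathcal{O}_{(1-\epsilon)d}(\mathbb{C}^n)$. The main technical obstacle is verifying that the patched $(g,u,h)$ satisfy the hypotheses of Theorem~\ref{thm8} on all of $(0,\infty)$ and that Calabi's trick for the cut locus in its proof applies verbatim here; both are routine continuity/monotonicity checks. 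The essential conceptual point is that positivity of holomorphic sectional curvature at a single point produces, by continuity, a uniform quantitative gap in the Hessian comparison localized in $B(q_0,\rho)$, and this localized gap is already enough to force a uniform (in $d$) improvement in the dimension bound.
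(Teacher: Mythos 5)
Your proposal is correct and follows essentially the same route as the paper: the paper's (very terse) proof likewise feeds the localized curvature gap into Theorem \ref{thm8} to show that $h$ strictly exceeds $\log r$ (the paper phrases this as $h(r)\geq(1+\delta)\log r+C$ for large $r$, whereas you extract the gap on the fixed annulus $\rho/2\leq r\leq\rho$ and combine it with Corollary \ref{cor1}), and then concludes that the vanishing order of any $f\in\mathcal{O}_d(M)$ at $p$ is at most $(1-\epsilon)d$, finishing by the standard jet/linear-algebra count. The only detail worth adding is that $\rho$ should be taken small enough (e.g.\ $\rho<\pi/(2\sqrt{c})$) so that the Riccati solution $u$ remains positive up to $r=\rho$ and hence equals $\tfrac{1}{2(r+a)}$ with $a>0$ beyond it.
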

\begin{remark}
Similar results were proved by Chen, Fu, Le and Zhu \cite{[CFLZ]}.
\end{remark}
\begin{proof}
We can estimate $h(r)$ in theorem \ref{thm8}. For large $r$, it turns out that $h(r)\geq (1+\delta)\log r+C$ (here $\delta>0, C$ are constants depending only on $M$). The details will be omitted. The theorem follows from the standard linear algebra argument as before.
\end{proof}
Below we show that if the holomorphic sectional curvature is positive near infinity and does not decay too fast,  the dimension of holomorphic functions with exponential growth is finite.
\begin{definition}
Let $M$ be a complete noncompact K\"ahler manifold and $p\in M$. Let $r(x) = dist(x, p)$. For any $A>0, d\geq 1$, define $E_{A, d}(M)=\{f\in \mathcal{O}(M)|\overline{\lim\limits_{r\to\infty}}\frac{|f(x)|}{e^{dr^A}}<\infty\}$.
For simplicity, we denote $E_{1, d}(M)$ by $E_d(M)$.
\end{definition}

\begin{theorem}\label{thm9}
Let $M^n$ be a complete noncompact K\"ahler manifold and $p\in M$. Let $r(x) = dist(x, p)$. Suppose the holomorphic sectional curvature satisfies $R_{1\overline{1}1\overline{1}} \geq \frac{C}{r^2}$ for all sufficiently large $r$. Here $e_1 = \frac{1}{\sqrt2}(\nabla r -\sqrt{-1}J\nabla r)$; $C$ is a constant satisfying $0<C<\frac{1}{4}$. Let $a>\frac{1}{4}$ satisfy $2a^2-a+\frac{C}{2} = 0$. Define $A = 1-2a$. Then for any $d\geq 1,$ \begin{equation}\label{24} dim(E_{A,d}(M))\leq C_1d^n,\end{equation} here $C_1$ is a constant depending only on $M$. Moreover, the power in (\ref{24}) is sharp. Finally, $\mathcal{O}_d(M) \equiv constant$ for any $d\geq 0$. 
\end{theorem}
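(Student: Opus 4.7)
The plan is to apply Theorem~\ref{thm8} with a suitably chosen comparison pair $(u,h)$ so that the resulting $h$ grows like $r^A$ at infinity, and then extract a sharp vanishing-order bound from the monotonicity. Concretely, choose a continuous lower bound $g(r)$ for $R_{1\bar 11\bar 1}$ that equals $C/r^2$ outside a compact set (and is extended in a convenient continuous manner inside it), and define $u$ by the Riccati equation
\[
u' + 2u^2 + \tfrac{1}{2}g = 0, \qquad \lim_{r\to 0^+} 2u(r)r = 1.
\]
The main ODE input I need is the asymptotic behavior of $u$ at infinity. Setting $u = v/r$ with $\tau = \log r$, the equation for large $r$ becomes autonomous:
\[
v_\tau = -2(v - a_-)(v - a_+), \qquad a_\pm = \tfrac{1\pm\sqrt{1-4C}}{4},
\]
with stable equilibrium at $v = a = a_+ > 1/4$. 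The singular initial condition gives $v \to 1/2 > a_+$ as $\tau \to -\infty$, and since $v_\tau < 0$ on $(a_+,\infty)$, $v$ decreases monotonically to $a_+$. Hence $u(r)r \to a$, so $u(r) \sim a/r$ at infinity. The companion function $h$ from Theorem~\ref{thm8} (defined by $\tfrac12 h'' + h' u = 0$) then satisfies $h'(r) = K\exp(-2\int u)$ and consequently
\[
h(r) = \frac{\beta}{A}\,r^A + O(1), \qquad r \to \infty,
\]
for some constant $\beta > 0$, with $A = 1 - 2a \in (0,\tfrac12)$.

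The monotonicity conclusion of Theorem~\ref{thm8} asserts that if $f \in \mathcal{O}(M)$ vanishes at $p$ to order $k$, then $M_f(r)/e^{k h(r)}$ is nondecreasing, giving $M_f(r) \geq c_0 \exp\bigl(\tfrac{k\beta}{A}r^A\bigr)$ for large $r$, with $c_0>0$ depending on $f$. If $f \in E_{A,d}(M)$, then by definition $M_f(r) \leq C_2 e^{d r^A}$ for large $r$; comparing exponential rates as $r \to \infty$ forces $k\beta/A \leq d$, so the vanishing order at $p$ of any $f \in E_{A,d}(M)$ is at most $\lfloor Ad/\beta \rfloor$. A standard linear algebra argument applied to the $k$-jet map at $p$ then yields
\[
\dim E_{A,d}(M) \leq \binom{n + \lfloor Ad/\beta \rfloor}{n} = C_1 d^n,
\]
establishing the dimension estimate. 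Sharpness of the power $n$ follows from a unitary-symmetric model (as in the sharpness remark for Theorem~\ref{thm8}) realizing $R_{1\bar 11\bar 1} \sim C/r^2$ asymptotically, on which suitable radial holomorphic polynomials exhaust the count. For the final assertion $\mathcal{O}_d(M) \equiv$ constant: given $f \in \mathcal{O}_d(M)$ and any $q \in M$, the holomorphic function $f - f(q)$ vanishes at $q$; if $f \not\equiv f(q)$, its vanishing order there is at least $1$, so the same monotonicity (applied at the basepoint $q$) forces $|f(x) - f(q)| \geq c_0 e^{h(r(x))}$, which grows faster than any polynomial, contradicting $f - f(q) \in \mathcal{O}_d(M)$.

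The main obstacle I anticipate is the Riccati analysis: verifying rigorously that the singular initial condition $2u(r)r \to 1$ selects the orbit attracted to the \emph{larger} root $a_+$ (rather than diverging or landing on $a_-$), and extracting a positive constant $\beta$ explicit enough to feed into the dimension estimate. A minor bookkeeping point is that the curvature bound is assumed only for sufficiently large $r$; this is absorbed by modifying $g$ on a compact set, which only alters $h$ by an additive constant and therefore preserves the asymptotic rate $h(r) \sim (\beta/A)r^A$ that drives the estimate.
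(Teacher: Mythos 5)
Your argument for the dimension estimate is essentially the paper's: both reduce to showing $r_{1\overline{1}}\lesssim a/r$ at infinity via the Riccati relation, deduce $h(r)\geq c_1r^{1-2a}+c_2$ with $c_1>0$, and then invoke the monotonicity of Theorem \ref{thm8} plus linear algebra. The only structural difference there is that the paper does not solve the Riccati ODE for a barrier $u$; it integrates the Riccati \emph{inequality} satisfied by $g(r)=r\,r_{1\overline{1}}$ itself (its Claim 2), getting the explicit bound $r_{1\overline 1}\leq\frac{aBr^k-b}{r(Br^k-1)}$ with $k=2a-2b$, which sidesteps your worry about which orbit the singular initial condition selects: one only needs to rule out $g$ staying below $a$, which is the easy case. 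Your phase-plane analysis is correct in any case, provided you choose the extension of the curvature lower bound on the compact part to be nonpositive so that $u\geq\frac{1}{2r}$ there and $v(r_0)\geq\frac12>a_+$ enters the right basin. For the final assertion the two proofs genuinely diverge: the paper observes that a nonconstant $f\in\mathcal O_d(M)$ yields linearly independent powers $f^k\in E_{A,1}(M)$, contradicting finite-dimensionality, whereas you use the vanishing-order monotonicity directly. Your version as written has a flaw: you apply the monotonicity at an arbitrary basepoint $q$, but the curvature hypothesis (and hence the Hessian comparison feeding Theorem \ref{thm8}) is stated only in the radial direction from $p$, so the monotonicity centered at $q\neq p$ is not justified. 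The fix is immediate --- take $q=p$: a nonconstant $f$ gives $f-f(p)$ vanishing at $p$ to finite order $k\geq1$, whence $M_{f-f(p)}(r)\geq c_0e^{kh(r)}$ grows superpolynomially. With that correction both routes are sound, and yours is arguably the more direct of the two.
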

\begin{proof}
Recall 
\begin{equation}\label{25}
2r_{1\overline{1}}^2 + \frac{\partial r_{1\overline{1}}}{\partial r} + \frac{1}{2}R_{1\overline{1}1\overline{1}}\leq 0.
\end{equation}
\begin{claim}\label{claim2}
Let $a>\frac{1}{4}, b<\frac{1}{4}$ be the positive solutions to $2x^2-x+\frac{C}{2} = 0$ for $0<C<\frac{1}{4}$. Then there exists a constant $B>0$ depending only on $M$ such that  $$r_{1\overline1}\leq \frac{aBr^k-b}{r(Br^k-1)}$$ for large $r$. Here $k = 2a-2b > 0$.
\end{claim}
\begin{proof}
It is easy to see $a<\frac{1}{2}$. Let $g(r) = rr_{1\overline1}$. Then we can plug in (\ref{25}) to find \begin{equation}\label{26} rg'-g+2g^2+\frac{C}{2}=rg'+2(g-a)(g-b)\leq 0\end{equation} for $r\geq r_0$ (here $r_0$ is a large number).  We may assume $g(r_0)\geq a$, otherwise $g(r)\leq a$ for all $r\geq r_0$ and the conclusion is obvious. Solving inequality (\ref{26}), we find \begin{equation}\label{27}\frac{1}{a-b}\ln\frac{g-a}{g-b}\leq-2\ln r +B_1.\end{equation} Here $B_1$ is a constant depending only on $g(r_0)$, $r_0$, $a$, $b$. Let $B = e^{-B_1}$, then a simplification of (\ref{27}) gives the claim.
\end{proof}

Let $h(r)$ be the solution to $\frac{1}{2}h''+h'\frac{aBr^k-b}{r(Br^k-1)}= 0$ with $\lim\limits_{r\to 0}\frac{e^{h(r)}}{r}=1$. It is not hard to see that for $r$ very large, \begin{equation}\label{28}
h(r)\geq c_1r^{1-2a}+c_2. \end{equation}Here $c_1, c_2$ are constants and $c_1>0$.
By theorem \ref{thm8}, given any $f\in \mathcal{O}(M)$, if the vanishing order at $p$ is no less than $d$, $\frac{M_f(x)}{e^{dh(r)}}$ is nondecreasing.  (\ref{28}) and a linear algebra argument give the dimension estimate. If $f\in\mathcal{O}_d(M)$ for some $d\geq 0$, then for any integer $k\geq 1$, $f^k\in E_{A, 1}(M)$. If $f$ is not a constant, $\{f^k\}$ will be linearly independent.
This contradicts that $dim(E_{A, 1}(M))$ is finite.

Finally, one can easily construct a unitary symmetric metric in $\mathbb{C}^n$ to verify the sharpness of the power in (\ref{24}).

\end{proof}
\begin{remark}
For $C = \frac{1}{4}$, we can also get sharp estimates by solving differential equations. However, the expression is more messy. If $C>\frac{1}{4}$, $M$ is automatically compact. \end{remark}
In the next theorem the sharp model is the generalized cigar soliton. This is the unitary symmetric K\"ahler metric on $\mathbb{C}^n$ defined by $\partial\overline\partial p(r)$ ($r$ is the Euclidean distance on $\mathbb{C}^n$). Here $p(r)$ is a function such that $\partial\overline\partial p(r)$ defines the cigar soliton on $\mathbb{C}$.  For details of the cigar soliton, see \cite{[H2]} and \cite{[CK]}.

 \begin{theorem}
 Let $M$ be a complete noncompact K\"ahler manifold and $p\in M$. Let $r(x) = dist(x, p)$. Suppose the holomorphic sectional curvature satisfies $R_{1\overline{1}1\overline{1}} (x)\geq \frac{8}{(e^r+e^{-r})^2}$ where $e_1 = \frac{1}{\sqrt2}(\nabla r - \sqrt{-1}J\nabla r)$. We have the sharp dimension estimate $$dim(E_d(M))\leq dim(E_d(N))=dim(\mathcal{O}_d(\mathbb{C}^n))$$ where $N$ is the generalized cigar soliton.
 \end{theorem}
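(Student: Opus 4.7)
The plan is to apply Theorem \ref{thm8} with the pair $(u,h)$ dictated by the generalized cigar soliton itself, on which the curvature inequality is saturated. Setting $g(r)=\frac{8}{(e^r+e^{-r})^2}=2\operatorname{sech}^2 r$, I would first record that the one-dimensional cigar metric $dr^2+\tanh^2r\,d\theta^2$ has Gauss curvature $-\phi''/\phi=2\operatorname{sech}^2 r$ (with $\phi=\tanh r$) and that its distance function has complex Hessian $r_{1\bar1}=\tfrac{1}{2}\phi'/\phi=\tfrac{1}{\sinh(2r)}$. This identifies the natural candidates
\[
u(r)=\frac{1}{\sinh(2r)},\qquad h(r)=\log\sinh r.
\]

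Next I would verify the hypotheses of Theorem \ref{thm8}. A short computation using $\cosh(2r)-1=2\sinh^2 r$ and $\sinh(2r)=2\sinh r\cosh r$ gives
\[
u'+2u^2=\frac{-2(\cosh(2r)-1)}{\sinh^2(2r)}=-\operatorname{sech}^2 r=-\tfrac{g}{2},
\]
so the Riccati inequality holds with equality, and $\lim_{r\to 0^+}2u(r)r=1$. For $h$ one checks $h'(r)=\coth r>0$, $\lim_{r\to 0^+}e^{h(r)}/r=1$, and
\[
\tfrac{1}{2}h''+h'u=-\tfrac{1}{2}\operatorname{csch}^2 r+\frac{\coth r}{\sinh(2r)}=-\tfrac{1}{2}\operatorname{csch}^2 r+\tfrac{1}{2}\operatorname{csch}^2 r=0.
\]
Theorem \ref{thm8} then yields: if $f\in\mathcal{O}(M)$ vanishes at $p$ with order $k$, then $\frac{M_f(r)}{(\sinh r)^k}$ is nondecreasing.

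The dimension estimate follows by the standard jet/linear algebra argument. For $f\in E_d(M)$, we have $M_f(r)\le Ce^{dr}$ for large $r$, and since $\sinh r\sim \tfrac{1}{2}e^r$ this gives $\frac{M_f(r)}{(\sinh r)^k}\le C'\,e^{(d-k)r}$ at infinity. If the integer $k$ exceeds $d$ this ratio tends to $0$, but by monotonicity it is also bounded below by its positive $r\to 0^+$ limit (since $M_f(r)\sim c\,r^k$ for an order-$k$ vanishing holomorphic function); hence $f\equiv 0$. Thus every nonzero $f\in E_d(M)$ vanishes at $p$ to order at most $[d]$, the jet map $E_d(M)\hookrightarrow J^{[d]}_pM$ is injective, and $\dim E_d(M)\le\binom{n+[d]}{n}=\dim\mathcal{O}_d(\mathbb{C}^n)$. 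For the sharpness and the identification $\dim E_d(N)=\dim\mathcal{O}_d(\mathbb{C}^n)$, I would use that on the generalized cigar $N=\mathbb{C}^n$ the distance satisfies $\sinh r_N\sim|z|$ as a consequence of the one-dimensional identity $\rho=\sinh r$; consequently $|f|\le Ce^{dr_N}$ at infinity is equivalent to $|f|\le C'|z|^d$ in Euclidean coordinates, so $E_d(N)=\mathcal{O}_d(\mathbb{C}^n)$ as $\mathbb{C}$-vector spaces.

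The main obstacle is really only conceptual bookkeeping rather than analytic work: the substance of the argument is packaged inside Theorem \ref{thm8}, and the key insight is that the curvature lower bound $\tfrac{8}{(e^r+e^{-r})^2}$ is exactly the holomorphic sectional curvature of the cigar, which lets us import the cigar's exact Hessian $u(r)=1/\sinh(2r)$ and the weight $e^{h(r)}=\sinh r$. Once these are in place, the proof is a transcription of the nonnegative-holomorphic-sectional-curvature argument with $r$ replaced by $\sinh r$ and polynomial growth replaced by exponential growth.
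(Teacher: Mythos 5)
Your proposal is correct and follows essentially the same route as the paper: the paper also takes $e^{h(r)}=\frac{e^r-e^{-r}}{2}=\sinh r$, verifies $r_{1\bar1}\le\frac{2}{e^{2r}-e^{-2r}}=\frac{1}{\sinh(2r)}$ from the Riccati inequality, invokes Theorem \ref{thm8} to get that $M_f(r)/(\sinh r)^k$ is nondecreasing, and concludes via the vanishing-order/linear-algebra argument. Your verification of the hypotheses of Theorem \ref{thm8} and of the identification $E_d(N)=\mathcal{O}_d(\mathbb{C}^n)$ is in fact more explicit than the paper's.
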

 \begin{proof}
 Define $h(r) =\log\frac{e^r-e^{-r}}{2}$. We show that $(h(r))_{1\overline{1}}\leq 0$. It suffices to verify $r_{1\overline{1}}\leq \frac{2}{e^{2r}-e^{-2r}}$. This follows by solving (\ref{25}). 
 Let $f\in \mathcal{O}(M)$ which vanishes at $p$ with order $k$. By theorem \ref{thm8}, $\frac{M_f(r)}{e^{kh(r)}}$ is nondecreasing. This implies that if $f\in E_d(M)$, the vanishing order of $f$ at $p$ is no greater than $d$. By a linear algebra argument, the sharp dimension inequality holds.
 \end{proof} 
 
\section{\bf{Examples}}
In this section we give some examples showing that certain K\"ahler manifolds admit complete K\"ahler metric with positive holomorphic sectional curvature, yet do not admit complete K\"ahler metric with nonnegative Ricci curvature. Recall the Hirzebruch surface $\mathbb{F}_n$ is $\mathbb{P}(\mathcal{O}_{\mathbb{P}^1}(n)\oplus \mathcal{O}_{\mathbb{P}^1})$, $n\geq 0$.  \begin{prop}[Hitchin]
For any natural number $n$ and positive integer $m$, complex manifold $\mathbb{F}_n\times \mathbb{C}^m$ admits complete K\"ahler metric with positive holomorphic sectional curvature. For infinitely many $n$, those manifolds do not admit complete K\"ahler metric with nonnegative Ricci curvature.
\end{prop}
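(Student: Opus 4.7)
The plan is to exhibit the desired metric as a Riemannian product and to obstruct nonnegative Ricci curvature by pulling the Ricci form back to the compact factor $\mathbb{F}_n$ and then applying a standard intersection computation.

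For the existence part, I would combine Hitchin's K\"ahler metric $g_H$ on $\mathbb{F}_n$ (which has positive holomorphic sectional curvature and is automatically complete since $\mathbb{F}_n$ is compact) with a complete K\"ahler metric on $\mathbb{C}^m$ of positive holomorphic sectional curvature. For the latter, the conformal metric $|dz|^2/(1+|z|^2)$ on $\mathbb{C}$ is complete (radial distance to infinity being $\sinh^{-1}(r)$) with Gaussian, hence holomorphic sectional, curvature $2/(1+|z|^2)>0$; the $m$-fold Riemannian product furnishes a metric $g_0$ on $\mathbb{C}^m$. Positivity of the holomorphic sectional curvature of $g_H\oplus g_0$ then follows from the elementary observation that on a product of K\"ahler manifolds the only nonzero components of the curvature tensor have all four indices from a single factor, so for a unit $(1,0)$-vector $v=v_1+v_2$,
\[
H(v)\;=\;\frac{H(v_1)\,|v_1|^4+H(v_2)\,|v_2|^4}{(|v_1|^2+|v_2|^2)^2},
\]
which is strictly positive whenever $v\neq 0$ and both factors have positive $H$; iterating gives the claim on $\mathbb{F}_n\times\mathbb{C}^m$.

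For the obstruction, suppose for contradiction that $g$ is a complete K\"ahler metric on $M:=\mathbb{F}_n\times\mathbb{C}^m$ with $\operatorname{Ric}(g)\geq 0$. Then the Ricci form $\rho_g=-\sqrt{-1}\partial\overline\partial\log\det(g)$ is a smooth closed pointwise semi-positive $(1,1)$-form representing $2\pi c_1(TM)$. The key observation is that the embedding $\iota\colon\mathbb{F}_n\hookrightarrow \mathbb{F}_n\times\{0\}\subset M$ has trivial normal bundle $\mathcal{O}_{\mathbb{F}_n}^{\oplus m}$, so the normal bundle sequence gives $c_1(TM|_{\mathbb{F}_n})=c_1(\mathbb{F}_n)$. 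Therefore $\iota^*\rho_g$ is a smooth closed pointwise semi-positive $(1,1)$-form on the projective surface $\mathbb{F}_n$ representing $-2\pi[K_{\mathbb{F}_n}]$, and pairing with any irreducible curve $C$ gives $-K_{\mathbb{F}_n}\cdot C=(2\pi)^{-1}\int_C \iota^*\rho_g\geq 0$. Hence $-K_{\mathbb{F}_n}$ is nef.

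To finish, I would rule out nefness for infinitely many $n$ via the standard numerical geometry of $\mathbb{F}_n$. In the basis $\operatorname{Pic}(\mathbb{F}_n)=\mathbb{Z}\langle C_0\rangle\oplus\mathbb{Z}\langle F\rangle$ with $C_0$ the negative section ($C_0^2=-n$) and $F$ a fiber ($F^2=0$, $C_0\cdot F=1$), adjunction on $C_0\simeq\mathbb{P}^1$ and $F\simeq\mathbb{P}^1$ gives $K_{\mathbb{F}_n}\cdot C_0=n-2$ and $K_{\mathbb{F}_n}\cdot F=-2$, so $K_{\mathbb{F}_n}=-2C_0-(n+2)F$ and $-K_{\mathbb{F}_n}\cdot C_0=2-n$. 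For every $n\geq 3$ this is strictly negative, contradicting nefness; therefore $\mathbb{F}_n\times\mathbb{C}^m$ admits no complete K\"ahler metric with nonnegative Ricci curvature whenever $n\geq 3$, giving the infinite family claimed in the proposition. I expect the most delicate step to be the existence half --- ensuring \emph{simultaneous} completeness and strict pointwise positivity of $H$ on the $\mathbb{C}^m$-factor --- because once that is in hand the product identity for $H$ and the adjunction/intersection bookkeeping on $\mathbb{F}_n$ combine into a direct contradiction.
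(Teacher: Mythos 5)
Your proof is correct and follows essentially the same route as the paper: for existence, a Riemannian product of Hitchin's metric on $\mathbb{F}_n$ with a complete positively curved K\"ahler metric on $\mathbb{C}^m$ (using that positivity of holomorphic sectional curvature passes to products), and for the obstruction, restriction of the semi-positive Ricci form to $\mathbb{F}_n\times\{pt\}$ via the trivial normal bundle followed by intersecting $-K_{\mathbb{F}_n}$ with the negative section to force $n\leq 2$. The only cosmetic differences are that you build the $\mathbb{C}^m$-factor explicitly and check the product formula for $H$, where the paper just invokes a known complete metric of positive bisectional curvature on $\mathbb{C}^m$, and your computation $-K_{\mathbb{F}_n}\cdot C_0=2-n$ is the paper's $K\cdot E_\infty=n-2$ written in a different basis of $\operatorname{Pic}(\mathbb{F}_n)$.
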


\begin{proof}
Recall in \cite{[H]}, Hitchin showed that for any integer $n\geq 0$, $\mathbb{F}_n$ admits a K\"ahler metric with positive holomorphic sectional curvature. 
Since $\mathbb{C}^m$ admits a complete K\"ahler metric with positive holomorphic bisectional curvature, the product metric $\mathbb{F}_n\times \mathbb{C}^m$ has positive holomorphic sectional curvature.

\bigskip

 Now assume $M = \mathbb{F}_n\times \mathbb{C}^m$ admits a K\"ahler metric with nonnegative Ricci curvature. We have $$TM|_{\mathbb{F}_n} = T\mathbb{F}_n\oplus N$$ where $N$ is the trivial normal bundle over $\mathbb{F}_n$. Therefore $c_1(T\mathbb{F}_n)\geq 0$. According to \cite{[GH]}, for $\mathbb{F}_n$, the canonical divisor $$K = -2E_0+(n-2)C$$ where $E_0$ is the zero-section of $\mathbb{F}_n$ and $C$ is the fibre. Consider the section $(\sigma, 0)\in \mathcal{O}_{\mathbb{P}^1}(n)\oplus \mathcal{O}_{\mathbb{P}^1}$,  where $\sigma$ is any section on $\mathcal{O}_{\mathbb{P}^1}(n)$. Away from the zeros of $\sigma$, $(\sigma, 0)$ gives a curve in $\mathbb{F}_n$. Let $E_\infty$ be the closure of this curve. We have $E_0\cdot E_0 = n$; $E_0\cdot C = 1$; $E_\infty\sim E_0-nC$. Since $K\cdot E_\infty\leq 0$, $n\leq 2$. This concludes the proof of the proposition.

\end{proof}

\end{document}